\documentclass[11pt]{amsart}

\usepackage{
  amssymb,amsmath,txfonts,mathrsfs,
  mathtools,titletoc,tikz
}
\usepackage{color}

\definecolor{reviewgreen}{rgb}{0,0.50,0}

\usepackage{graphicx}
\usepackage{float}
\usepackage{enumitem}

\usetikzlibrary{
  arrows.meta,
  positioning,
  calc,
  decorations.pathreplacing
}

\usepackage{hyperref}
\usepackage[nameinlink,capitalise,noabbrev]{cleveref}

\allowdisplaybreaks
\hypersetup{
  colorlinks=true,
  linkcolor=blue,
  citecolor=blue,
  urlcolor=blue
}

\theoremstyle{plain}
\newtheorem{theorem}{Theorem}[section]
\newtheorem{proposition}[theorem]{Proposition}
\newtheorem{lemma}[theorem]{Lemma}
\newtheorem{corollary}[theorem]{Corollary}

\theoremstyle{plain}

\newtheorem{definition}[theorem]{Definition}
\newtheorem{notation}[theorem]{Notation}

\theoremstyle{plain}
\newtheorem{remark}[theorem]{Remark}

\newcommand{\Hn}{\mathbb H^n}

\newcommand{\bdry}{\partial_\infty}
\newcommand{\Jac}{\operatorname{Jac}}
\newcommand{\Vol}{\operatorname{Vol}}

\newcommand{\dist}{\operatorname{dist}}
\newcommand{\tr}{\operatorname{tr}}
\newcommand{\Id}{\operatorname{Id}}

\newcommand{\dd}{\,\mathrm{d}}
\newcommand{\HS}{\mathrm{HS}}

\DeclareMathOperator{\tension}{\tau}

\newcommand{\R}{\mathbb R}

\numberwithin{equation}{section}
\begin{document}
\title[A Harmonic-Map Proof of Mostow Rigidity]
{A Harmonic-Map Proof of Mostow Rigidity}
\author{Guoyi Xu}
\address{Department of Mathematical Sciences\\Tsinghua University, Beijing\\P. R. China}
\email{guoyixu@tsinghua.edu.cn}
\date{\today}
\begin{abstract}
Given an isomorphism between the fundamental groups of two compact
hyperbolic manifolds of dimension at least three, we consider the
associated equivariant harmonic lift.  We establish a sharp pointwise gradient estimate for this lift. This yields a harmonic-map proof of Mostow
rigidity.
\end{abstract}
\subjclass[2020]{Primary 53C43; Secondary 53C24, 53C35, 58E20}

\thanks{Guoyi Xu was partially supported by NSFC 12141103.}
\maketitle

\tableofcontents

% ----------------------------------------------------------------
% Compile-visible record of the preamble-only typography change.
% Remove this box after reviewing the marked copy.
% ----------------------------------------------------------------

% ================================================================
\section{Introduction}\label{sec:statement}
% ================================================================

Throughout, \(M^n\) and \(N^n\) are compact hyperbolic
\(n\)-manifolds with \(n\geq3\).  Set
\(\Gamma:=\pi_1(M^n)\) and \(\Lambda:=\pi_1(N^n)\), and let
\(\rho:\Gamma\longrightarrow\Lambda\) be an isomorphism.
Since \(\Hn\) is contractible, compact hyperbolic manifolds are
\(K(\pi,1)\)-spaces, so \(\rho\) is realized by a homotopy equivalence
\(M\to N\); see \cite[Proposition~1B.9]{Hatcher}.

Mostow proved in 1968 that compact real hyperbolic manifolds of dimension
at least three are determined, up to isometry, by their fundamental
groups, using the quasiconformal geometry of the sphere at infinity
\cite{Mostow1968}.  Prasad subsequently extended strong rigidity to the
noncompact finite-volume setting \cite{Prasad1973}.  A substantially
different proof was later obtained by Besson--Courtois--Gallot through
their minimal-entropy method \cite{BessonCourtoisGallot}.

By the Eells--Sampson theorem \cite[\S11A]{EellsSampson}, the corresponding homotopy class contains a harmonic map
\[
\bar u:M\longrightarrow N.
\]
The Eells--Sampson construction above suggests another natural route:
prove directly that the harmonic representative of the prescribed
homotopy equivalence is an isometry.

After choosing lifts, we obtain a harmonic map $u:\Hn\longrightarrow\Hn$ satisfying
\begin{equation}\label{eq:equivariance}
u(\gamma x)=\rho(\gamma)u(x),
\qquad
\gamma\in\Gamma,
\quad x\in\Hn.
\end{equation}

Li--Tam established fundamental
uniqueness and boundary-regularity results for proper harmonic maps
between hyperbolic spaces \cite{LiTam,LiTamII}, while Li--Wang proved
uniqueness for harmonic rough isometries into negatively curved Hadamard
spaces \cite{LiWang}.  On the existence side of the Schoen--Li--Wang
program, Lemm--Markovi\'c treated real hyperbolic spaces, and
Benoist--Hulin established the corresponding theorem for rank-one
symmetric spaces \cite{LemmMarkovic,BenoistHulin}.  These works provide the closest analytic framework for a harmonic-map
proof, but they do not by themselves establish the remaining rigidity
step.

To the author's knowledge, no previous argument has begun with the
equivariant harmonic representative and proved directly that it is an
isometry without invoking Mostow rigidity itself or an essentially
equivalent boundary-conformal rigidity theorem.  This is consistent with
the recent survey of Daskalopoulos--Mese, which records that a harmonic-map
proof of Mostow's theorem was still missing
\cite[p.~104]{DaskalopoulosMese}.  

For a smooth map \(F:(X,g)\to(Y,h)\), define
\begin{equation}\label{eq:HS_norm_def}
|dF_x|^2:=\operatorname{tr}_g(F^*h)_x= \sum_{i=1}^{\dim X}|dF_x(e_i)|_h^2,
\end{equation}
where \(e_1,\dots,e_{\dim X}\) form any \(g\)-orthonormal basis of \(T_xX\).  Thus \(|dF_x|\) is the Hilbert--Schmidt norm of the
differential.  Some references denote the same quantity by
\(|\nabla F|\); throughout this paper,
\[
|\nabla F|^2=|dF|^2.
\]
The energy density is $e(F)=\frac12|dF|^2$.

The main theorem below addresses
precisely the remaining rigidity step through a sharp pointwise energy
identity.

\begin{theorem}\label{thm:main}
Let \(M^n\) and \(N^n\) be connected compact hyperbolic
\(n\)-manifolds without boundary, where \(n\geq3\), and let
\[
\rho:\pi_1(M)\longrightarrow\pi_1(N)
\]
be an isomorphism.  Let
\[
\bar u:M\longrightarrow N
\]
be the Eells--Sampson harmonic representative of the corresponding
homotopy class, and let
\[
u:\mathbb H^n\longrightarrow\mathbb H^n
\]
be its \(\rho\)-equivariant lift.  Then
\[
|du|^2\equiv n
\qquad\text{on }\mathbb H^n.
\]
\end{theorem}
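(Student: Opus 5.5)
The plan is to squeeze \(|du|^2\) between a pointwise upper bound and an integral lower bound coming from topology, and then use the equality cases. Since \(u\) is \(\rho\)-equivariant and both \(\Gamma\) and \(\Lambda\) act by isometries, \(|du|^2\) descends to the smooth function \(|d\bar u|^2\) on the compact manifold \(M\). So maximum-principle arguments and integration may be carried out on \(M\).

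\emph{Step 1 (pointwise upper bound).} I would prove \(|du|^2\le n\) everywhere using the Bochner formula for harmonic maps. Let \(\lambda_1,\dots,\lambda_n\ge0\) be the eigenvalues of \(u^*h\) with respect to \(g\). Since \(\operatorname{Ric}_M=-(n-1)g\) and \(N\) has sectional curvature \(-1\), the formula reads
\[
\tfrac12\Delta|du|^2=|\nabla du|^2-(n-1)|du|^2+\Bigl(\sum_i\lambda_i\Bigr)^2-\sum_i\lambda_i^2 .
\]
The curvature term equals \(2\sigma_2(\lambda)\), which is at most \(\frac{n-1}{n}|du|^4\), with equality exactly when \(u\) is conformal. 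At a maximum point of \(|d\bar u|^2\) the left side is \(\le 0\), but this inequality runs the wrong way to bound \(|du|^2\) directly.

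I therefore expect this step to be the main obstacle. The first thing I would try is to apply the maximum principle to a better quantity than \(|du|^2\). Natural candidates are \(\log\det(u^*h)\), or \(\sigma_k(\lambda)^{1/k}\) for suitable \(k\). For such a quantity, the refined Kato inequality for the harmonic map, \(|\nabla du|^2\ge\frac{n}{n-1}\,|\nabla|du||^2\) or its analogue for the chosen quantity, should control the gradient terms. The aim is to obtain the sharp Schwarz-type bound \(|\Jac u|\le\bigl(|du|^2/n\bigr)^{n/2}\le 1\), possibly at the level of the Jacobian rather than of \(|du|^2\) itself.

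\emph{Step 2 (volume).} Because \(\bar u\) is a homotopy equivalence, it has degree \(\pm1\). Hence \(\int_M|\Jac\bar u|\ge\Vol(N)\). Next I would show \(\Vol(M)=\Vol(N)\). One route is to run the same argument with the harmonic representative of \(\rho^{-1}\), which gives \(\Vol(M)\ge\Vol(N)\) and \(\Vol(N)\ge\Vol(M)\) once Step 1 yields \(|\Jac|\le1\) for both maps. An alternative is to invoke proportionality of simplicial volume; I would prefer the symmetric harmonic-map argument so as to stay self-contained.

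\emph{Step 3 (equality).} Combining the two steps gives
\[
\Vol(M)=\Vol(N)\le\int_M|\Jac\bar u|\le\int_M\Bigl(\tfrac{|d\bar u|^2}{n}\Bigr)^{n/2}\le\Vol(M).
\]
All inequalities are therefore equalities pointwise. Equality in the arithmetic–geometric mean step forces \(\lambda_1=\dots=\lambda_n\), and equality in the bound \(\le1\) forces \(|du|^2=n\). By continuity this holds on all of \(M\), and so \(|du|^2\equiv n\) on \(\mathbb H^n\) by equivariance.
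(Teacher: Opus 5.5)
\textbf{Gap: Step 1 is the whole theorem and is not proved.} Everything rests on the pointwise bound $|du|^2\le n$, or $|\Jac u|\le 1$, and you give no argument for it. Your own computation shows why the maximum principle cannot supply it. At a maximum of $|d\bar u|^2$ you only get $|\nabla du|^2+2\sigma_2(\lambda)\le (n-1)|du|^2$. Since $2\sigma_2(\lambda)$ can be arbitrarily small compared with $|du|^4$ when $du$ is nearly rank one, this bounds nothing. The sharp value $|du|^2\le n$ would follow only if $du$ were already conformal at the maximum point, which is circular. This noncoercivity is exactly the obstruction the paper records after \eqref{eq:intro_hyperbolic_bochner}. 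Replacing $|du|^2$ by $\log\det(u^*h)$ or $\sigma_k^{1/k}$ and invoking a refined Kato inequality is a hope, not a mechanism.

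\textbf{Step 1 must use $n\geq3$, and nothing in your plan does.} Your Steps 2--3 are dimension-free. If Step 1 held for $n=2$, you would conclude that every homotopy equivalence between closed hyperbolic surfaces is homotopic to an isometry, which is false. Indeed, for non-isometric surfaces, $\int_M|d\bar u|^2>2\Vol(M)$, so $|d\bar u|^2\le 2$ fails somewhere. Your local Bochner--Kato computation gives no indication of where $n\geq3$ would enter.

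\textbf{The Jacobian fallback is not enough.} If you only proved $|\Jac\bar u|\le 1$, Steps 2--3 would give $|\Jac\bar u|\equiv1$. The arithmetic--geometric mean inequality then yields only $|d\bar u|^2\ge n$, not equality.

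\textbf{How the paper gets the exact value.} The paper never proves a one-sided bound; it obtains $n$ directly from the ideal boundary.
\begin{enumerate}
\item The equivariant lift is a globally Lipschitz rough isometry, so its boundary map is quasisymmetric.
\item Because the boundary has dimension $n-1\geq2$, that map lies in $W^{1,1}_{\mathrm{loc}}$. Hence it has a point where it is differentiable with invertible differential $A$ (\Cref{thm:E_nonempty}). This is exactly where $n\geq3$ is used.
\item The renormalized maps $\delta_{1/s}\circ u\circ\delta_s$ subconverge to harmonic rough isometries with boundary map $A$. Li--Wang uniqueness identifies every such limit with $H_A(x,y)=(Ax,c_Ay)$, and $|dH_A|^2\equiv n$ even when $A$ is anisotropic.
\item This gives $|du|^2\to n$ on balls of fixed radius escaping along a ray. $\Gamma$-invariance of $|du|^2$ and cocompactness (\Cref{lem:cocompact_propagation}) then propagate this to $|du|^2\equiv n$.
\end{enumerate}
Your Steps 2--3 are essentially the paper's proof of \Cref{cor:mostow}. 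The paper uses that argument only after $|du|^2\equiv n$ is already known, to upgrade the identity to an isometry.
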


\begin{corollary}[Mostow rigidity]\label{cor:mostow}
Under the hypotheses of \Cref{thm:main}, the harmonic representative
\[
\bar u:M\longrightarrow N
\]
is a global Riemannian isometry.  
\end{corollary}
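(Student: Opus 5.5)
The plan is to combine the pointwise identity $|du|^2\equiv n$ of \Cref{thm:main} with a volume comparison coming from the fact that $\bar u$ is a homotopy equivalence. We argue in both directions, using \Cref{thm:main} for $\rho$ and for $\rho^{-1}$.

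\textbf{Step 1 (pointwise Jacobian bound).} Fix $x\in M$ and let $\lambda_1,\dots,\lambda_n\ge 0$ be the eigenvalues of $\bar u^*h$ with respect to $g$ at $x$. Since $|du|^2$ is invariant under deck transformations, \Cref{thm:main} descends to $|d\bar u|^2\equiv n$ on $M$. Hence $\sum_i\lambda_i=n$. By the arithmetic--geometric mean inequality,
\[
|\Jac \bar u|=\Big(\prod_i\lambda_i\Big)^{1/2}\le\Big(\tfrac1n\sum_i\lambda_i\Big)^{n/2}=1 .
\]
Equality holds if and only if $\lambda_1=\dots=\lambda_n=1$, that is, if and only if $(\bar u^*h)_x=g_x$.

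\textbf{Step 2 (volume inequality).} Since $\bar u$ is a homotopy equivalence, it induces an isomorphism on $H_n(\,\cdot\,;\mathbb Z/2)\cong\mathbb Z/2$. Thus its mod-$2$ degree is $1$, and in particular $\bar u$ is surjective. This argument avoids any orientability assumption. By the area formula,
\[
\Vol(M)\ \ge\ \int_M|\Jac\bar u|\dd v_g=\int_N\#\bar u^{-1}(y)\dd v_h(y)\ \ge\ \Vol(N).
\]
Now apply \Cref{thm:main} to the isomorphism $\rho^{-1}:\pi_1(N)\to\pi_1(M)$. This gives a harmonic $\bar v:N\to M$ with $|d\bar v|^2\equiv n$, and the same argument yields $\Vol(N)\ge\Vol(M)$.

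\textbf{Step 3 (equality case).} Since $\Vol(M)=\Vol(N)$, every inequality in Step 2 is an equality. In particular $\int_M(1-|\Jac\bar u|)\dd v_g=0$. Because the integrand is continuous and nonnegative, $|\Jac\bar u|\equiv1$. By the equality case in Step 1, $\bar u^*h=g$ everywhere, so $\bar u$ is a local isometry. A local isometry from a compact manifold onto a connected manifold is a finite Riemannian covering map. Since $\bar u_*$ is an isomorphism on $\pi_1$, the covering is one-sheeted, so $\bar u$ is a global isometry. Alternatively, equality in the area formula forces $\#\bar u^{-1}(y)=1$ for almost every $y$, which again shows the covering has one sheet.

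\textbf{Main obstacle.} The only delicate point is Step 2. The corollary must be applied symmetrically, and this requires that \Cref{thm:main} hold for $\rho^{-1}$ as well, which it does because the hypotheses are symmetric. One must also justify $\int_M|\Jac\bar u|\ge\Vol(N)$ without assuming orientability; the mod-$2$ degree argument above handles this. Everything else is elementary linear algebra and standard covering-space theory.
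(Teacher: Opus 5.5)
Your proposal is correct and follows the paper's proof step for step: AM--GM gives $|\Jac\bar u|\le 1$, applying \Cref{thm:main} to $\rho^{-1}$ yields $\Vol(M)=\Vol(N)$, continuity forces $|\Jac\bar u|\equiv 1$ and hence a local isometry, and $\bar u_*=\rho$ makes the resulting covering one-sheeted. The only difference is that you obtain $\int_M|\Jac\bar u|\ge\Vol(N)$ from the mod-$2$ degree (surjectivity) and the area formula, rather than from the signed formula $\int_M\Jac(\bar u)=\deg(\bar u)\Vol(N)$; this has the small advantage of not needing the orientations that the paper tacitly uses, even though \Cref{thm:main} does not assume $M$ and $N$ orientable.
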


The principal analytic obstruction to this harmonic-map program is the
absence, in the real-hyperbolic setting, of a coercive differential
identity that forces the harmonic representative to be an isometry.

Indeed, if \(u\) is harmonic between hyperbolic
\(n\)-manifolds and \(Q:=u^*h\), then the ordinary Bochner formula becomes
\begin{align}
\frac12\Delta |du|^2
=
|\nabla du|^2
-(n-1)|du|^2
+|du|^4-|Q|^2.
\label{eq:intro_hyperbolic_bochner}
\end{align}
If \(\lambda_1,\dots,\lambda_n\) are the singular values of \(du\), the
zero-order part of the right-hand side is
\begin{align}
-(n-1)\sum_{i=1}^n\lambda_i^2
+2\sum_{1\leq i<j\leq n}\lambda_i^2\lambda_j^2,
\nonumber
\end{align}
which has no fixed sign.  Thus integration of
\eqref{eq:intro_hyperbolic_bochner} does not force
\(\nabla du=0\).

Nor do harmonicity and rough-isometry control alone
provide the missing equality: rough isometry is a coarse condition and
allows nonconformal boundary behavior.  

The general existence and
uniqueness theory for harmonic maps associated with quasi-isometries
illustrates the breadth of this class; see Benoist--Hulin
\cite[Theorem~1.1 and Fact~1.4]{BenoistHulin}.

%The present argument obtains the missing rigidity by exploiting the cocompact lattice equivariance in \eqref{eq:equivariance}.

%Consequently, the cocompact lattice equivariance in
%\eqref{eq:equivariance} must enter essentially.

%The present proof uses that symmetry in a different way from the averaging-and-conservation-law direction suggested by Daskalopoulos--Mese.    

We firstly use equivariance and cocompactness to
show that \(u\) is a globally Lipschitz rough isometry and that
\(|du|^2\) is \(\Gamma\)-invariant.  The boundary extension and the
elementary regularity argument then produce one point
\(\xi\in\partial_\infty\Hn\) at which the boundary map has an invertible
differential
\begin{align}
A:=D(\bdry u)_\xi.
\nonumber
\end{align}

After a fixed source--target boundary-coordinate normalization at
\(\xi\), and writing \(A\) for the differential of the boundary map in
the resulting charts, we consider the scale-dependent maps
\begin{align}
u_s
&:=
\delta_{1/s}\circ u\circ\delta_s,
&
\delta_s(x,y)&:=(sx,sy).
\nonumber
\end{align}
Although \(\delta_s\) is a Euclidean dilation in upper half-space
coordinates, it is an exact hyperbolic isometry.  Thus no hyperbolic
metric is rescaled and no Euclidean interior tangent space appears.
Intrinsically, \(u_s\) is a pointed recentering at infinity, prescribed by
the ordinary Euclidean tangent-map blow-up of the boundary map:
\begin{align}
\bdry u_s(z)
=
\frac{(\bdry u)(sz)}s
\longrightarrow
Az.
\nonumber
\end{align}
We therefore call \(\{u_s\}\) the boundary-tangent renormalization of
\(u\).  Every \(C^\infty_{\mathrm{loc}}\) subsequential limit of
\(u_s\) is identified with the explicit harmonic map
\begin{align}
H_A(x,y)
=
\left(
Ax,
\sqrt{\frac{\operatorname{tr}(A^*A)}{n-1}}\,y
\right),
\qquad
|dH_A|^2\equiv n.
\nonumber
\end{align}

Notice that \(H_A\) need not be an isometry: the point is that its
Hilbert--Schmidt energy has the exact value \(n\), independently of the
anisotropy of \(A\).

The delicate step is to turn this single boundary differential into an
identity for the original map.  Merely obtaining convergence at the single
recentered basepoint would not suffice.  The basepoint estimate, elliptic
compactness, identification of the limiting boundary map, and harmonic
rough-isometry uniqueness together show that every subsequential limit of
the renormalized family equals \(H_A\).  

This yields full
\(C^\infty_{\mathrm{loc}}\) convergence and hence, for every fixed
\(R<\infty\), the moving-ball estimate
\begin{align}
\lim_{t\to\infty}
\sup_{z\in B(r_\xi(t),R)}
\left||du_z|^2-n\right|
=0.
\nonumber
\end{align}

The deterministic cocompactness argument in
\Cref{lem:cocompact_propagation} translates a compact fundamental set into
these moving balls.  Since \(|du|^2\) is \(\Gamma\)-invariant, the
moving-ball limit propagates to
\begin{align}
|du|^2\equiv n
\qquad\text{on }\Hn.
\nonumber
\end{align}

Thus the role played in the suggested program by averaging is replaced
here by boundary-tangent renormalization, harmonic uniqueness, and
cocompact propagation.

The inverse homotopy class, the degree formula, and equality in the
Jacobian--energy inequality then convert this sharp trace identity into
the statement that \(\bar u\) is an isometry. 

In this sense, the full moving-ball convergence is the mechanism that overcomes the
noncoercivity of the ordinary Bochner formula in the present proof.

The most direct historical precursor is Mostow's original
quasiconformal proof.  At a boundary point where the quasiconformal
conjugacy is differentiable with invertible differential, Mostow considers
renormalized boundary maps and combines the resulting tangent map with the
dynamics of hyperbolic dilations and a conformal-capacity argument to force
that tangent map to be conformal; see
\cite[pp.~99--101]{Mostow1968}.  The present proof instead passes the
possibly anisotropic tangent through the nonlinear harmonic extension
problem and extracts the sharp trace identity from the resulting harmonic
model.

The analytically closest predecessors are the boundary regularity theory of
Li--Tam for proper harmonic maps between hyperbolic spaces
\cite{LiTam,LiTamII} and the normal-scale calculation of
Akutagawa--Matsumoto for asymptotically hyperbolic manifolds
\cite[Lemma~1.3 and Corollary~1.4]{AkutagawaMatsumoto}.  Those results assume
\(C^1\) control on a boundary neighborhood.  Here differentiability at one
boundary point is combined with harmonic rough-isometry uniqueness and
cocompact propagation.  A detailed comparison is given after
Proposition~\ref{prop:tangent}.

Acknowledgements: The author thanks Peter Li for first bringing this problem to his
attention and for teaching a course on harmonic maps during the author's
postdoctoral appointment at the University of California, Irvine.  The
author is deeply grateful to Jiaping Wang for pointing out that the
central issue is to obtain a sharp gradient estimate for the equivariant
harmonic map.  Finally, the author remembers with deep gratitude the late
Robert Gulliver, his doctoral advisor, who once told him, ``What matters
in mathematics is proof.''  These words often returned to him as he
sought the proofs presented in this paper.

% ================================================================
\section{The harmonic lift is a rough isometry}
%\ReviewUnusedLabel{sec:rough}
% ================================================================

The first substantial role of group equivariance is coarse geometric.

\begin{definition}\label{def:rough-isometry}
We call a map \(F:X\to Y\) a \emph{rough isometry} if there are constants
\(L\geq1\), \(C\geq0\), and \(C_0\geq0\) such that
\begin{equation}\label{eq:general-rough-isometry-bounds}
L^{-1}d_X(x,x')-C
\leq
d_Y(F(x),F(x'))
\leq
L\,d_X(x,x')+C
\end{equation}
for all \(x,x'\in X\), and every point of \(Y\) lies within distance
\(C_0\) of \(F(X)\).
\end{definition}

We now verify that the equivariant harmonic lift has this coarse-geometric
property.

\begin{proposition}\label{prop:rough}
The equivariant harmonic lift \(u:\Hn\to\Hn\) is a globally Lipschitz rough
isometry.
\end{proposition}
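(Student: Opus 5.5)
The plan is to deduce everything from compactness of \(M\) and \(N\) together with the equivariance \eqref{eq:equivariance}, using the Švarc--Milnor lemma for the coarse statement and elementary Lipschitz estimates for the rest.

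\textbf{Global Lipschitz bound.} The energy density \(|du|^2\) is \(\Gamma\)-invariant. Indeed, \(u\circ\gamma=\rho(\gamma)\circ u\), and both \(\gamma\) and \(\rho(\gamma)\) are isometries of \(\Hn\), so
\[
|du|^2(\gamma x)=|du|^2(x).
\]
Thus \(|du|^2\) descends to the smooth function \(|d\bar u|^2\) on the compact manifold \(M\), and is bounded by some \(L_0^2\). Since the operator norm of \(du\) is at most the Hilbert--Schmidt norm, integrating along geodesics gives
\[
d(u(x),u(x'))\le L_0\,d(x,x')
\]
for all \(x,x'\in\Hn\). This is the global Lipschitz property and also the upper half of \eqref{eq:general-rough-isometry-bounds}.

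\textbf{Lower bound via an equivariant inverse.} I will build a map in the reverse direction from the inverse isomorphism \(\rho^{-1}:\Lambda\to\Gamma\). Since \(N\) is also a \(K(\pi,1)\), there is a smooth map \(\bar v:N\to M\) inducing \(\rho^{-1}\), and it lifts to a \(\rho^{-1}\)-equivariant map \(v:\Hn\to\Hn\). By the same compactness argument \(v\) is Lipschitz with some constant \(L_1\). The composite \(v\circ u\) is \(\Gamma\)-equivariant with respect to the identity, so the displacement \(x\mapsto d(v(u(x)),x)\) is \(\Gamma\)-invariant and continuous, hence bounded by some \(C_1\). Therefore
\[
d(x,x')\le d(vu(x),vu(x'))+2C_1\le L_1\,d(u(x),u(x'))+2C_1,
\]
which is the lower bound with \(L=\max(L_0,L_1)\) and \(C=2C_1/L_1\).

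\textbf{Coarse surjectivity.} By the same reasoning, \(d(u(v(y)),y)\le C_2\) for all \(y\in\Hn\). Hence every point of \(\Hn\) lies within \(C_2\) of \(u(\Hn)\), so one can take \(C_0=C_2\).

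\textbf{Main obstacle.} The only delicate point is justifying that \(vu\) and \(uv\) have bounded displacement. This needs the equivariance to be with respect to the identity homomorphism, which holds because \(\rho^{-1}\circ\rho=\mathrm{id}\). It also needs the displacement function to descend to the compact quotient. An alternative I could use is the Švarc--Milnor lemma: orbit maps are quasi-isometries, and \(u\) is at bounded distance from the map \(\gamma x_0\mapsto\rho(\gamma)u(x_0)\), which is a quasi-isometry because \(\rho\) is an isomorphism of finitely generated groups and hence bi-Lipschitz for word metrics.
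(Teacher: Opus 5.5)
Your proposal is correct and has the same structure as the paper's proof: a \(\rho^{-1}\)-equivariant Lipschitz lift \(v\) of a map \(\bar v:N\to M\) inducing \(\rho^{-1}\) serves as a coarse inverse, and the displacements \(x\mapsto d(v(u(x)),x)\) and \(y\mapsto d(u(v(y)),y)\) are bounded because they are invariant under the cocompact deck groups. These two bounds then give the lower coarse bound and coarse surjectivity exactly as you write. The paper additionally lifts based homotopies \(\bar v\circ\bar u\simeq\Id_M\) and \(\bar u\circ\bar v\simeq\Id_N\) and bounds the displacement along the whole homotopy, but it only uses the value at \(t=0\), whose invariance you correctly derive directly from \(\rho^{-1}\circ\rho=\mathrm{id}\); so your shortcut is legitimate and removes an unnecessary step.
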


\begin{proof}
\textbf{Step (1)}. Let
\begin{align}
\pi_M:\mathbb H^n\longrightarrow M,
\qquad
\pi_N:\mathbb H^n\longrightarrow N
\nonumber
\end{align}
be the universal covering projections.  

Fix \(x_0\in\mathbb H^n\), and set
\begin{align}
p_0:=\pi_M(x_0),
\qquad
y_0:=u(x_0),
\qquad
q_0:=\pi_N(y_0)=\bar u(p_0).
\nonumber
\end{align}

Thus \(\bar u\) is regarded as a based map
\begin{align}
\bar u:(M,p_0)\longrightarrow(N,q_0),
\nonumber
\end{align}
and, under the identifications of the deck-transformation groups with
the corresponding based fundamental groups determined by \(x_0\) and
\(y_0\), its induced homomorphism is precisely
\(\rho:\Gamma\to\Lambda\).

Since \(M\) and \(N\) are \(K(\pi,1)\)-spaces and \(\rho\) is an
isomorphism, the inverse homomorphism \(\rho^{-1}\) is represented by a
based map
\begin{align}
\bar v:(N,q_0)\longrightarrow(M,p_0).
\nonumber
\end{align}
After a smooth approximation relative to \(q_0\), we may assume that
\(\bar v\) is smooth and still induces \(\rho^{-1}\).  

The two based maps \(\bar v\circ\bar u\) and \(\Id_M\) induce the same homomorphism
of \(\pi_1(M,p_0)\), and similarly \(\bar u\circ\bar v\) and
\(\Id_N\) induce the same homomorphism of \(\pi_1(N,q_0)\).

The based classification of maps into \(K(\pi,1)\)-spaces therefore gives based homotopies
\begin{align}
H_M&:M\times[0,1]\longrightarrow M,
&
H_M(\cdot,0)&=\bar v\circ\bar u,
&
H_M(\cdot,1)&=\Id_M,
\nonumber\\
H_N&:N\times[0,1]\longrightarrow N,
&
H_N(\cdot,0)&=\bar u\circ\bar v,
&
H_N(\cdot,1)&=\Id_N,
\nonumber
\end{align}
such that
\begin{align}
H_M(p_0,t)=p_0,
\qquad
H_N(q_0,t)=q_0
\qquad\text{for every }t\in[0,1].
\nonumber
\end{align}
For the realization and based-homotopy classification used here, see
\cite[Proposition~1B.9, pp.~90--91]{Hatcher}.

The previously fixed map \(u\) is the unique lift of \(\bar u\) satisfying
\(u(x_0)=y_0\).  Let
\begin{align}
\widetilde v:\mathbb H^n\longrightarrow\mathbb H^n
\nonumber
\end{align}
be the unique lift of \(\bar v\) satisfying
\(\widetilde v(y_0)=x_0\).  Since \(\bar v_*=\rho^{-1}\), the two lifts
satisfy
\begin{align}
u(\gamma x)&=\rho(\gamma)u(x),
&
\widetilde v(\lambda y)&=\rho^{-1}(\lambda)\widetilde v(y)
\nonumber
\end{align}
for every \(\gamma\in\Gamma\), \(\lambda\in\Lambda\), and
\(x,y\in\mathbb H^n\).  Consequently,
\begin{align}
\widetilde v\circ u(\gamma x)
&=\gamma\,\widetilde v\circ u(x),
&
u\circ\widetilde v(\lambda y)
&=\lambda\,u\circ\widetilde v(y).
\nonumber
\end{align}

Lift \(H_M\) uniquely to
\begin{align}
\widetilde H_M:\mathbb H^n\times[0,1]\longrightarrow\mathbb H^n
\nonumber
\end{align}
by requiring
\begin{align}
\widetilde H_M(x,0)=\widetilde v\circ u(x).
\nonumber
\end{align}
Because \(H_M(p_0,t)=p_0\) and
\(\widetilde v\circ u(x_0)=x_0\), the path
\(t\mapsto\widetilde H_M(x_0,t)\) is constantly equal to \(x_0\).
Hence \(\widetilde H_M(\cdot,1)\) is the lift of \(\Id_M\) fixing
\(x_0\), and therefore
\begin{align}
\widetilde H_M(x,1)=x
\qquad\text{for every }x\in\mathbb H^n.
\nonumber
\end{align}
The uniqueness of the lifted homotopy also gives
\begin{align}
\widetilde H_M(\gamma x,t)
=\gamma\widetilde H_M(x,t)
\qquad
\text{for every }\gamma\in\Gamma.
\nonumber
\end{align}

Similarly, lift \(H_N\) by requiring
\begin{align}
\widetilde H_N(y,0)=u\circ\widetilde v(y).
\nonumber
\end{align}
The based condition implies
\begin{align}
\widetilde H_N(y,1)&=y,
&
\widetilde H_N(\lambda y,t)
&=\lambda\widetilde H_N(y,t)
\nonumber
\end{align}
for every \(y\in\mathbb H^n\), \(\lambda\in\Lambda\), and
\(t\in[0,1]\).

Define
\begin{align}
F_M(x,t)&:=d\bigl(\widetilde H_M(x,t),x\bigr),
&
F_N(y,t)&:=d\bigl(\widetilde H_N(y,t),y\bigr).
\nonumber
\end{align}
The maps \(F_M\) and \(F_N\) are continuous because the lifted
homotopies and the hyperbolic distance function are continuous.

We first verify their invariance explicitly.  Every deck
transformation of \(\mathbb H^n\) is an isometry.  Hence, using the
equivariance of the lifted homotopies, for every
\(\gamma\in\Gamma\), \(x\in\mathbb H^n\), and \(t\in[0,1]\), we have
\begin{align}
F_M(\gamma x,t)&=d\bigl(\widetilde H_M(\gamma x,t),\gamma x\bigr)=d\bigl(\gamma\widetilde H_M(x,t),\gamma x\bigr)
=d\bigl(\widetilde H_M(x,t),x\bigr)=F_M(x,t).
\nonumber
\end{align}

Similarly, for every \(\lambda\in\Lambda\),
\(y\in\mathbb H^n\), and \(t\in[0,1]\),
\begin{align}
F_N(\lambda y,t)&=d\bigl(\widetilde H_N(\lambda y,t),\lambda y\bigr)=d\bigl(\lambda\widetilde H_N(y,t),\lambda y\bigr)=d\bigl(\widetilde H_N(y,t),y\bigr)
=F_N(y,t).
\nonumber
\end{align}

Thus \(F_M\) is constant on every fiber of
\(\pi_M\times\Id_{[0,1]}\), while \(F_N\) is constant on every fiber
of \(\pi_N\times\Id_{[0,1]}\).  Consequently there are well-defined
functions
\begin{align}
\overline F_M:M\times[0,1]&\longrightarrow[0,\infty),
&
\overline F_N:N\times[0,1]&\longrightarrow[0,\infty)
\nonumber
\end{align}
characterized by
\begin{align}
\overline F_M\bigl(\pi_M(x),t\bigr)
&=F_M(x,t)
=d\bigl(\widetilde H_M(x,t),x\bigr),
\nonumber\\
\overline F_N\bigl(\pi_N(y),t\bigr)
&=F_N(y,t)
=d\bigl(\widetilde H_N(y,t),y\bigr).
\nonumber
\end{align}
These functions are continuous.  

Since \(M\) and \(N\) are compact and \([0,1]\) is compact, the
spaces \(M\times[0,1]\) and \(N\times[0,1]\) are compact.  By the
extreme-value theorem, the finite constants
\begin{align}
b_M&:=\max_{(p,t)\in M\times[0,1]}\overline F_M(p,t)<\infty,
\nonumber\\
b_N&:=\max_{(q,t)\in N\times[0,1]}\overline F_N(q,t)<\infty
\nonumber
\end{align}
are well defined.  Pulling these bounds back to the universal
covers gives
\begin{align}
d\bigl(\widetilde H_M(x,t),x\bigr)&\leq b_M
&&\text{for every }(x,t)\in\mathbb H^n\times[0,1],
\nonumber\\
d\bigl(\widetilde H_N(y,t),y\bigr)&\leq b_N
&&\text{for every }(y,t)\in\mathbb H^n\times[0,1].
\nonumber
\end{align}
Finally, by the prescribed initial values of the lifted homotopies,
\begin{align}
\widetilde H_M(x,0)&=\widetilde v\circ u(x),
&
\widetilde H_N(y,0)&=u\circ\widetilde v(y).
\nonumber
\end{align}
Evaluating the preceding uniform bounds at \(t=0\), we obtain
\begin{align}
d\bigl(\widetilde v\circ u(x),x\bigr)&\leq b_M
&&\text{for every }x\in\mathbb H^n,
\label{eq:tilde-v-and-u-bounds}\\
d\bigl(u\circ\widetilde v(y),y\bigr)&\leq b_N
&&\text{for every }y\in\mathbb H^n.
\label{eq:lifted-homotopy-bounds}
\end{align}

%Thurston proves that lifts of homotopy equivalences between closed hyperbolic
%manifolds are pseudo-isometries in
%\cite[\S~5.9, Lemma~5.9.1 and its complete proof, p.~105]{ThurstonGT3M}.
%We now apply precisely the estimate in that proof.

Since \(\bar u\) and \(\bar v\) are smooth maps between compact Riemannian manifolds, their lifts are globally Lipschitz.  Let \(L_u,L_{\widetilde v}\geq1\) be Lipschitz constants for \(u\) and \(\widetilde v\), respectively. 

For \(x,x'\in\mathbb H^n\), from (\ref{eq:tilde-v-and-u-bounds}) and the above Lipschitz property, we obtain
\begin{align}
d(x,x')-2b_M
&\leq
d\bigl(\widetilde v\circ u(x),
       \widetilde v\circ u(x')\bigr)
\leq
L_{\widetilde v}\,d\bigl(u(x),u(x')\bigr).
\nonumber
\end{align}

It follows that
\begin{align}
L_{\widetilde v}^{-1}d(x,x')-\frac{2b_M}{L_{\widetilde v}}
\leq d\bigl(u(x),u(x')\bigr)
\leq L_u\,d(x,x').
\label{eq:Thurston-applied-to-u}
\end{align}
Thus \(u\) is a pseudo-isometry in Thurston's sense.

\textbf{Step (2)}. It remains only to verify the coarse-surjectivity condition. For every \(y\in\mathbb H^n\),
the second inequality in \eqref{eq:lifted-homotopy-bounds} gives
\begin{align}
d\bigl(y,u(\widetilde v(y))\bigr)\leq b_N.
\nonumber
\end{align}
Therefore every point of the target lies within distance \(b_N\) of
\(u(\mathbb H^n)\).  
Taking
\begin{align}
L:=\max\{L_u,L_{\widetilde v}\},
\qquad
C:=\frac{2b_M}{L_{\widetilde v}},
\qquad
C_0:=b_N,
\nonumber
\end{align}
and combining the coarse-surjectivity estimate with
\eqref{eq:Thurston-applied-to-u}, we obtain all the conditions in
Definition~\ref{def:rough-isometry}.

Hence \(u\) is a globally Lipschitz rough isometry.
\end{proof}

% ================================================================
\section{Boundary extension of a rough isometry}
%\ReviewUnusedLabel{sec:boundary}

\begin{definition}\label{def:quasisymmetry}
%\ReviewLabelChange{def:power-quasisym}{def:quasisymmetry}
Let \((X,d_X)\) and \((Y,d_Y)\) be metric spaces, and let
\begin{align}
\eta:[0,\infty)\longrightarrow[0,\infty)
\nonumber
\end{align}
be an increasing homeomorphism. A homeomorphism
\begin{align}
f:X\longrightarrow Y
\nonumber
\end{align}
is called \textbf{\(\eta\)-quasisymmetric} if
\begin{align}
\frac{d_Y\bigl(f(x),f(a)\bigr)}
     {d_Y\bigl(f(x),f(b)\bigr)}
\leq
\eta\!\left(
\frac{d_X(x,a)}{d_X(x,b)}
\right)
\label{eq:metric-quasisymmetry}
\end{align}
for all distinct \(x,a,b\in X\).

The map \(f\) is called \textbf{quasisymmetric} if it is
\(\eta\)-quasisymmetric for some increasing homeomorphism
\(\eta:[0,\infty)\to[0,\infty)\).
\end{definition}

% ================================================================

\begin{definition}[Ideal boundary]\label{def:ideal-boundary}
The ideal boundary \(\bdry\Hn\) is the set of equivalence classes of
unit-speed geodesic rays
\begin{align}
c:[0,\infty)\longrightarrow\Hn,
\nonumber
\end{align}
where two rays \(c\) and \(c'\) are equivalent if they are asymptotic,
that is, if
\begin{align}
\sup_{t\geq0}d\bigl(c(t),c'(t)\bigr)<\infty.
\nonumber
\end{align}
Equivalently, their images have finite Hausdorff distance.  The
equivalence class of \(c\) is denoted by \(c(\infty)\).
\end{definition}

For every \(o\in\Hn\) and every \(\xi\in\bdry\Hn\), there is a unique
unit-speed geodesic ray
\begin{align}
r_{o,\xi}:[0,\infty)\longrightarrow\Hn
\nonumber
\end{align}
such that
\begin{align}
r_{o,\xi}(0)=o,
\qquad
r_{o,\xi}(\infty)=\xi.
\nonumber
\end{align}
When the base point \(o\) is fixed, we write \(r_\xi:=r_{o,\xi}\).
These facts are standard; see Bridson--Haefliger
\cite[Definition~II.8.1 and Proposition~II.8.2]{BridsonHaefliger}.

\begin{definition}\label{def:Gromov-product}
Fix \(o\in\Hn\).  For \(a,b\in\Hn\), put
\begin{align}
(a\mid b)_o
:=
\frac12\bigl(d(o,a)+d(o,b)-d(a,b)\bigr).\nonumber
\end{align}

Using the ray \(r_\xi=r_{o,\xi}\) fixed above, for \(z\in\Hn\) and
\(\xi\in\bdry\Hn\) define the mixed Gromov product by

\begin{align}
(z\mid\xi)_o:=\lim_{t\to\infty}(z\mid r_\xi(t))_o=\lim_{t\to\infty}\frac12\bigl(d(o,z)+t-d(z,r_\xi(t))\bigr).
\label{eq:mixed-Gromov-product-definition}
\end{align}
By symmetry, set $(\xi\mid z)_o:=(z\mid\xi)_o$.  For \(\xi,\zeta\in\bdry\Hn\), define the boundary Gromov product by
\begin{align}
(\xi\mid\zeta)_o:=\lim_{t\to\infty}(r_\xi(t)\mid r_\zeta(t))_o=\lim_{t\to\infty}\frac12\bigl(2t-d(r_\xi(t),r_\zeta(t))\bigr). \label{eq:boundary-Gromov-product-definition}
\end{align}
The last limit is allowed to take the value \(+\infty\); in particular,
\((\xi\mid\xi)_o=+\infty\).  When the base point \(o\) is replaced by
\(x\in\Hn\), the products \((z\mid\xi)_x\) and
\((\xi\mid\zeta)_x\) are defined by the same formulas using the
unit-speed geodesic rays starting from \(x\).
\end{definition}

Let
\begin{align}
\mathbb B^n
:=
\left\{
x\in\mathbb R^n:|x|<1
\right\},
\qquad
g_{\mathbb B}
:=
\frac{4|dx|^2}{(1-|x|^2)^2}
\nonumber
\end{align}
be the Poincar\'e ball model. Choose an isometry
\begin{align}
I_o:\Hn\longrightarrow(\mathbb B^n,g_{\mathbb B})
\nonumber
\end{align}
such that \(I_o(o)=0\). For every \(\xi\in\bdry\Hn\), the curve
\(I_o\circ r_\xi\) is a radial geodesic, so there is a unique
\(\iota_o(\xi)\in\mathbb S^{n-1}\) such that
\begin{align}
I_o(r_\xi(t))
=
\tanh\left(\frac t2\right)\iota_o(\xi),
\qquad
t\geq0.
\label{eq:radial-ball-identification}
\end{align}
The resulting homeomorphism
defined by \eqref{eq:radial-ball-identification}
\begin{align}
\iota_o:\bdry\Hn\longrightarrow\mathbb S^{n-1}
\nonumber
\end{align}
is called a ball-model boundary identification based at \(o\).

\begin{lemma}[Ball-model visual metric formula]
\label{lem:elementary-hyperbolic-boundary-estimates}
The mixed and boundary Gromov products in
Definition~\ref{def:Gromov-product}, defined by
\eqref{eq:mixed-Gromov-product-definition} and
\eqref{eq:boundary-Gromov-product-definition}, are well defined. Identify
\(\Hn\cup\bdry\Hn\) with the closed Poincar\'e ball
\(\overline{\mathbb B^n}\subset\mathbb R^n\) so that
\(o=0\).  Throughout this lemma, \(|\cdot|\) denotes the
Euclidean norm on \(\mathbb R^n\).  If \(z\in\Hn\) and
\(\xi\in\bdry\Hn=\mathbb S^{n-1}\), then
\begin{align}
e^{-(z\mid\xi)_o}
=
\frac{|z-\xi|}{1+|z|}.
\label{eq:mixed-visual-ball-formula}
\end{align}
Consequently, for \(\xi,\zeta\in\bdry\Hn\),
\begin{align}
e^{-(\xi\mid\zeta)_o}
=
\frac12|\xi-\zeta|.
\label{eq:visual-is-chordal}
\end{align}
\end{lemma}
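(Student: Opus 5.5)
The plan is to compute everything explicitly in the Poincar\'e ball with \(o=0\), using the distance formula
\[
\cosh d(a,b)=1+\frac{2|a-b|^2}{(1-|a|^2)(1-|b|^2)}.
\]
Three steps follow: the radial distance, the asymptotics along the ray, and the limit.

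First, the distance to the origin. For a point \(a\) with \(|a|=\tanh(s/2)\) we have \(d(0,a)=s\). Equivalently,
\[
e^{d(0,a)}=\frac{1+|a|}{1-|a|}.
\]
Along the ray \(r_\xi(t)=\tanh(t/2)\xi\), set \(\epsilon_t:=1-\tanh(t/2)\). Then \(1-|r_\xi(t)|^2\sim 2\epsilon_t\) and \(e^{t}\sim 2/\epsilon_t\) as \(t\to\infty\).

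Second, the mixed product. Fix \(z\in\mathbb B^n\). As \(t\to\infty\), \(d(z,r_\xi(t))\to\infty\), so \(e^{d}\sim 2\cosh d\). Hence
\[
e^{d(z,r_\xi(t))}\sim \frac{4|z-\xi|^2}{(1-|z|^2)\,2\epsilon_t}.
\]
Combining this with \(e^{d(0,z)}=(1+|z|)/(1-|z|)\) and \(e^{t}\sim2/\epsilon_t\) gives
\[
e^{-2(z\mid r_\xi(t))_0}=\frac{e^{d(z,r_\xi(t))}}{e^{d(0,z)}e^{t}}\longrightarrow \frac{4|z-\xi|^2}{2(1-|z|^2)}\cdot\frac{1-|z|}{1+|z|}\cdot\frac12=\frac{|z-\xi|^2}{(1+|z|)^2}.
\]
This shows at once that the limit defining \((z\mid\xi)_o\) exists, hence is well defined, and it proves \eqref{eq:mixed-visual-ball-formula}. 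The only care needed is to justify replacing \(\cosh\) by \(\tfrac12 e^{d}\), and \(1-|r|^2\) by \(2\epsilon_t\), with multiplicative errors tending to \(1\). This is routine because \(|z-\xi|>0\) for \(z\) interior.

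Third, the boundary product. Take \(a=\tanh(t/2)\xi\) and \(b=\tanh(t/2)\zeta\). Then
\[
\cosh d(a,b)=1+\frac{2\tanh^2(t/2)|\xi-\zeta|^2}{(1-\tanh^2(t/2))^2}.
\]
For \(\xi\neq\zeta\) this gives \(e^{d(a,b)}\sim 4|\xi-\zeta|^2/(4\epsilon_t^2)=|\xi-\zeta|^2/\epsilon_t^2\), while \(e^{2t}\sim 4/\epsilon_t^2\). Therefore
\[
e^{-2(a\mid b)_0}=\frac{e^{d(a,b)}}{e^{2t}}\to \frac{|\xi-\zeta|^2}{4}.
\]
This yields \eqref{eq:visual-is-chordal} and well-definedness. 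When \(\xi=\zeta\), \(d(a,b)=0\), so the product equals \(t\to+\infty\), consistent with \(\tfrac12|\xi-\xi|=0\).

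Alternatively, one can note that \((a\mid b)_0\) is nondecreasing in \(t\) by the triangle inequality, which independently gives existence of the limit in \([0,\infty]\). The explicit formula is still needed for the value.

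Finally, well-definedness should address independence of the choice of isometry \(I_o\). Any two such choices differ by an element of \(O(n)\), which preserves Euclidean norms and the boundary identification \(\iota_o\), so the formulas are unchanged. For a general base point \(x\), the definitions are transported by an isometry moving \(x\) to \(o\).

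The main obstacle is only bookkeeping in the asymptotics. I would handle it by writing exact expressions with \(\cosh d=\tfrac12e^{d}(1+e^{-2d})\) and taking limits of the exact ratios, rather than using \(\sim\) informally.
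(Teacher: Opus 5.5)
Your proof is correct and follows essentially the paper's route. The radial asymptotics of the Poincar\'e-ball distance formula that you carry out for \eqref{eq:mixed-visual-ball-formula} are exactly the paper's computation of the Busemann function $\beta_\xi(z,o)=\log\frac{|z-\xi|^2}{1-|z|^2}$, just without naming it. For \eqref{eq:visual-is-chordal} you compute the diagonal limit $\lim_{t\to\infty}(r_\xi(t)\mid r_\zeta(t))_o$ directly, which matches \eqref{eq:boundary-Gromov-product-definition} more literally than the paper's ``let $z\to\zeta$'' step; that step tacitly identifies an iterated limit with the diagonal one, and your monotonicity observation, applied in each ray parameter separately, would justify the identification.
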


\begin{proof}
The hyperbolic distance and Busemann formulas in the Poincar\'e ball are
\begin{align}
d(o,z)
&=
\log\frac{1+|z|}{1-|z|},
&
\beta_\xi(z,o)
&=
\log\frac{|z-\xi|^2}{1-|z|^2}.
\nonumber
\end{align}
The second identity follows by substituting the radial point
\(r_\xi(t)\) into the hyperbolic distance formula and letting
\(t\to\infty\).  Since
\begin{align}
(z\mid\xi)_o
=
\frac12\bigl(d(o,z)-\beta_\xi(z,o)\bigr),
\nonumber
\end{align}
we obtain
\begin{align}
e^{-2(z\mid\xi)_o}
&=
\frac{1-|z|}{1+|z|}
\cdot
\frac{|z-\xi|^2}{1-|z|^2}
=
\frac{|z-\xi|^2}{(1+|z|)^2},
\nonumber
\end{align}
which proves \eqref{eq:mixed-visual-ball-formula}.  Letting
\(z\to\zeta\in\mathbb S^{n-1}\) proves
\eqref{eq:visual-is-chordal}.  The convergence criteria follow
immediately, and compactness of the closed-ball compactification is the
ordinary compactness of the Euclidean closed unit ball.
\end{proof}

\begin{definition}\label{def:visual-metric-ball-identification}
The visual metric based at \(o\) is
\begin{align}
\varrho_o(\xi,\zeta)
:=
e^{-(\xi\mid\zeta)_o},
\qquad
\xi,\zeta\in\bdry\Hn.
\label{eq:visual-metric-definition}
\end{align}
\end{definition}

Equip \(\mathbb S^{n-1}\) with its unit round metric and corresponding
geodesic distance
\begin{align}
d_{\mathbb S^{n-1}}(x,y)
:=
\arccos\langle x,y\rangle_{\mathbb R^n}.
\label{eq:round-distance-definition}
\end{align}
For the distance defined in
\eqref{eq:round-distance-definition}, \eqref{eq:visual-is-chordal} and the identity
\begin{align}
|x-y|
=
2\sin\left(\frac12d_{\mathbb S^{n-1}}(x,y)\right)
\nonumber
\end{align}
give
\begin{align}
\varrho_o(\xi,\zeta)
&=\frac12
\left|
\iota_o(\xi)-\iota_o(\zeta)
\right|=\sin\left(
\frac12 d_{\mathbb S^{n-1}}
\bigl(\iota_o(\xi),\iota_o(\zeta)\bigr)
\right).
\label{eq:visual-round-exact-relation}
\end{align}
Consequently, \eqref{eq:visual-round-exact-relation} implies
\begin{align}
2\varrho_o(\xi,\zeta)
\leq
d_{\mathbb S^{n-1}}
\bigl(\iota_o(\xi),\iota_o(\zeta)\bigr)
\leq
\pi\varrho_o(\xi,\zeta).
\label{eq:visual-round-bilipschitz}
\end{align}

\begin{definition}[Quasi-geodesic]
\label{def:quasi-geodesic}
Let \(I\subset\mathbb R\) be an interval, let \((X,d)\) be a metric
space, and let \(L\geq1\), \(C\geq0\).  A map
\begin{align}
q:I\longrightarrow X
\nonumber
\end{align}
is called an \emph{\((L,C)\)-quasi-geodesic} if
\begin{align}
L^{-1}|t-t'|-C
\leq d\bigl(q(t),q(t')\bigr)
\leq L|t-t'|+C
\qquad\text{for all }t,t'\in I.
\label{eq:quasi-geodesic-definition}
\end{align}
When \(I=[a,b]\), \(I=[0,\infty)\), or \(I=\mathbb R\), respectively,
we call \(q\) a quasi-geodesic segment, ray, or complete line.
\end{definition}

\begin{lemma}[Morse lemma for real hyperbolic space]
\label{lem:Morse}
Let \(d\geq2\), \(L\geq1\), and \(C\geq0\).  There exists a constant
\begin{align}
M=M(L,C)<\infty
\nonumber
\end{align}
with the following properties.
\begin{enumerate}[label=\textup{(\roman*)},leftmargin=2.8em]
\item Suppose that \(q:[0,\infty)\to\mathbb H^d\) is continuous and
satisfies \eqref{eq:quasi-geodesic-definition} for all
\(t,t'\geq0\).  Then there is a unique point
\(\xi\in\partial_\infty\mathbb H^d\) such that
\begin{align}
q(t)\longrightarrow\xi
\qquad(t\to\infty).
\nonumber
\end{align}
If \(\gamma:[0,\infty)\to\mathbb H^d\) is the geodesic ray satisfying
\(\gamma(0)=q(0)\) and \(\gamma(\infty)=\xi\), then
\begin{align}
d_{\mathrm H}\bigl(q([0,\infty)),\gamma([0,\infty))\bigr)\leq M.
\nonumber
\end{align}

\item Suppose that \(q:\mathbb R\to\mathbb H^d\) is continuous and
satisfies \eqref{eq:quasi-geodesic-definition} for all
\(t,t'\in\mathbb R\).  Then there are two distinct points
\(\xi^-,\xi^+\in\partial_\infty\mathbb H^d\) such that
\begin{align}
q(t)&\longrightarrow\xi^+
\quad(t\to+\infty),
&
q(t)&\longrightarrow\xi^-
\quad(t\to-\infty).
\nonumber
\end{align}
If \((\xi^-\xi^+)\) denotes the complete geodesic with ideal endpoints
\(\xi^-\) and \(\xi^+\), then
\begin{align}
d_{\mathrm H}\bigl(q(\mathbb R),(\xi^-\xi^+)\bigr)\leq M.
\label{eq:Morse-line-conclusion}
\end{align}
\end{enumerate}
Here \(d_{\mathrm H}\) denotes the Hausdorff distance between subsets of
\(\mathbb H^d\).  The constant \(M\) depends only on \(L\) and \(C\), and
not on \(q\), its endpoints, or \(d\).
\end{lemma}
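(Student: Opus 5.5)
We will prove the Morse lemma by reducing everything to the finite-segment stability statement and then passing to limits. The first step is to replace \(q\) by a continuous quasi-geodesic that is also Lipschitz at a fixed scale. Concretely, we sample \(q\) at the integer times, join consecutive samples by geodesic segments, and obtain a tame \((L',C')\)-quasi-geodesic \(q'\) with \(d_{\mathrm H}(q,q')\leq L+C\). Here \(L',C'\) depend only on \(L,C\), and continuity of \(q\) is not even needed for this step. Since the Hausdorff distance changes by at most \(L+C\), it suffices to prove everything for \(q'\).

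The core estimate is the following. If \(q:[a,b]\to\mathbb H^d\) is a tame \((L,C)\)-quasi-geodesic segment and \(\sigma\) is the geodesic from \(q(a)\) to \(q(b)\), then \(d_{\mathrm H}(q([a,b]),\sigma)\leq M_0(L,C)\). We use only that \(\mathbb H^d\) is \(\delta\)-hyperbolic with the dimension-free constant \(\delta=\log(1+\sqrt2)\). This constant is dimension-free because every geodesic triangle lies in a totally geodesic copy of \(\mathbb H^2\).

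\textbf{Step 1: \(\sigma\) lies near \(q\).} Let \(D:=\sup_t d(q(t),\sigma)\), attained at some \(t_0\). Take the subpath of \(q\) on an interval of length about \(2D\) around \(t_0\). Connect its endpoints to \(\sigma\) by shortest segments. This gives a path from \(\sigma\) around the subpath back to \(\sigma\) that stays outside the \(D\)-neighbourhood except near its ends. Its length is at most linear in \(D\).

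\textbf{Step 2: the exponential divergence lemma.} In a \(\delta\)-hyperbolic space, a path of length \(\ell\) joining the ends of a geodesic stays within \(\delta|\log_2\ell|+1\) of that geodesic. Combining this with the linear length bound from Step 1 gives \(D\leq c_1\log(c_2 D)+c_3\). Hence \(D\) is bounded in terms of \(L\) and \(C\).

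\textbf{Step 3: \(q\) lies near \(\sigma\).} Points of \(\sigma\) outside the \(D\)-neighbourhood of \(q\) would force a connectedness contradiction. This is the standard argument: the parameter interval splits into two closed sets according to which side of \(\sigma\) is near, and quasi-geodesic bounds control the jump between them. This yields \(M_0\).

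For (i), apply the segment estimate to \(q|_{[0,T]}\) with geodesics \(\sigma_T\) from \(q(0)\) to \(q(T)\). Since \(d(q(0),q(T))\to\infty\), consider the initial unit directions of the \(\sigma_T\). Two segments \(\sigma_T,\sigma_{T'}\) with \(T<T'\) fellow-travel within \(2M_0\) up to length \(\approx L^{-1}T-C\). By thinness in the ball model at \(o=q(0)\), their endpoints' visual distances are then bounded by \(Ce^{-L^{-1}T}\). So \(q(T)\) is Cauchy in \(\overline{\mathbb B^d}\), by \eqref{eq:mixed-visual-ball-formula} and Lemma~\ref{lem:elementary-hyperbolic-boundary-estimates}, and its limit \(\xi\) lies on \(\bdry\). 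The geodesic segments \(\sigma_T\) converge uniformly on compacta to the ray \(\gamma\) from \(q(0)\) to \(\xi\). Passing to the limit in the segment bound gives \(d_{\mathrm H}\leq M_0+1\). Uniqueness of \(\xi\) is automatic from convergence.

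For (ii), apply (i) to both half-lines to get \(\xi^\pm\). If \(\xi^-=\xi^+\), the two rays from \(q(0)\) would coincide. Then \(q(t)\) and \(q(-t)\) would stay within \(2M\) of a common ray at comparable distances from \(q(0)\), contradicting \(d(q(t),q(-t))\geq 2L^{-1}t-C\). For the line, apply the segment estimate to \(q|_{[-T,T]}\). As \(T\to\infty\), the geodesics \([q(-T),q(T)]\) converge on compacta to \((\xi^-\xi^+)\), which gives \eqref{eq:Morse-line-conclusion}.

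\textbf{Main obstacle.} The main obstacle is the finite-segment estimate, specifically Step 2's logarithmic-length bound and the bookkeeping of the taming constants. I would handle the logarithmic bound by the standard dyadic subdivision of the path, using \(\delta\)-thin triangles. The dimension independence costs nothing once \(\delta\) is dimension-free, because every constant then depends only on \(L,C,\delta\).
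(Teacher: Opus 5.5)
Your overall plan matches the paper's. The paper cites Bridson--Haefliger (Theorem III.H.1.7 for segments, Lemma III.H.3.1 for rays) and reaches the line by a diagonal Arzel\`a--Ascoli argument, so it has the same structure: a uniform stability estimate for segments, then limits of comparison segments. Your Cauchy argument for the endpoint in (i) is a fine self-contained substitute. Your remark that \(\delta=\log(1+\sqrt2)\) is dimension-free is exactly what justifies the independence of \(M\) from \(d\), a point the paper leaves implicit.

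The genuine gap is in the segment estimate you chose to reprove. Step~2 states the exponential divergence lemma backwards. It is false that a path of length \(\ell\) joining the endpoints of a geodesic stays within \(\delta|\log_2\ell|+1\) of it. In \(\mathbb H^2\), start at one endpoint of a unit segment, go out perpendicularly a distance \(R\), come back, and follow the segment. The path has length \(2R+1\) and reaches distance \(R\) from the segment, while \(\delta\log_2(2R+1)+1\ll R\). The true statement (Bridson--Haefliger, Proposition III.H.1.6, which is what the dyadic subdivision you mention actually proves) is the reverse: every point of the geodesic lies within \(\delta|\log_2\ell|+1\) of the path. Your Step~1 is built for the false version. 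You define \(D=\sup_t d(q(t),\sigma)\), attained at a point \(q(t_0)\) on the path, and obtain \(D\le c_1\log(c_2D)+c_3\) by applying the lemma to that point. With the correct lemma this configuration gives nothing, so Steps~1--2 as written bound nothing.

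To repair it, run the argument in the other direction. Let \(D:=\max_{x\in\sigma}d(x,q([a,b]))\), attained at \(x_0\in\sigma\). Take \(y,z\in\sigma\) at distance \(2D\) from \(x_0\) on either side (or the endpoints of \(\sigma\) if these are closer), and parameters \(a',b'\) with \(d(y,q(a'))\le D\) and \(d(z,q(b'))\le D\). The concatenation \([y,q(a')]\ast q|_{[a',b']}\ast[q(b'),z]\) avoids the open ball \(B(x_0,D)\). Since \(d(q(a'),q(b'))\le 6D\), tameness bounds its length by \(c_1D+c_2\). The correct lemma applied at \(x_0\) then gives \(D\le\delta\log_2(c_1D+c_2)+1\).

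Step~3 is then a connectedness argument on \(\sigma\) itself, not on the parameter interval. Let \((a'',b'')\) be a maximal interval on which \(q\) leaves the closed \(D\)-neighbourhood of \(\sigma\). Then \(\sigma\) is covered by the closed \(D\)-neighbourhoods of \(q([a,a''])\) and \(q([b'',b])\), and connectedness forces these to meet. Alternatively, simply cite Theorem III.H.1.7 as the paper does.

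A smaller point in (ii): comparing \(q(t)\) with \(q(-t)\) at the same \(t\) only gives \(d(q(t),q(-t))\le(L-L^{-1})t+O(1)\). This does not contradict \(d(q(t),q(-t))\ge 2L^{-1}t-C\) once \(L\ge\sqrt3\). Use matched distances instead. By continuity choose \(t'\) with \(d(q(0),q(-t'))=d(q(0),q(t))\); then \(d(q(t),q(-t'))\le 4M\), while \((t+t')/L-C\to\infty\). The paper avoids this issue by first producing the limiting complete geodesic, whose two endpoints are automatically distinct.
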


\begin{proof}
The ray statement, including the existence and uniqueness of the ideal endpoint,
is \cite[Lemma~III.H.3.1, pp.~427--428]{BridsonHaefliger}.

For the line statement, apply the standard segment stability theorem
\cite[Theorem~III.H.1.7, pp.~401--402]{BridsonHaefliger} directly to
\(q|_{[-T,T]}\).

The resulting comparison
segments all have the same uniform Hausdorff constant \(M\).  Since
\(\mathbb H^d\) is proper, a diagonal Arzel\`a--Ascoli argument produces,
as \(T\to\infty\), a complete geodesic whose Hausdorff distance from
\(q(\mathbb R)\) is at most \(M\). 

Applying the ray statement to the two
half-lines of \(q\) shows that their ideal endpoints exist, are distinct,
and are precisely the two endpoints of this complete geodesic.
\end{proof}

\begin{proposition}\label{prop:rough-boundary-extension}
Let
$
F:\Hn\longrightarrow\Hn
$ 
be a continuous rough isometry.  Fix rough-isometry constants
\(L\geq1\), \(C\geq0\), and \(C_0\geq0\) as in
Definition~\ref{def:rough-isometry}.  Then \(F\) has a unique continuous
extension
\begin{align}
\overline F:\Hn\cup\bdry\Hn
\longrightarrow
\Hn\cup\bdry\Hn,
\nonumber
\end{align}
and
$
\bdry F:=\overline F|_{\bdry\Hn}
$ 
is a homeomorphism. Furthermore, for every \(o\in\Hn\), with \(o'=F(o)\), the map
\begin{align}
\bdry F:
(\bdry\Hn,\varrho_o)
\longrightarrow
(\bdry\Hn,\varrho_{o'})
\nonumber
\end{align}
is quasisymmetric, where the visual metrics are defined by
\eqref{eq:visual-metric-definition}.
\end{proposition}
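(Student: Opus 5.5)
The plan is to follow the standard Gromov-hyperbolic argument (Bridson--Haefliger III.H.3, Bonk--Schramm), using only Lemma~\ref{lem:Morse} and the explicit visual-metric formulas of Lemma~\ref{lem:elementary-hyperbolic-boundary-estimates}.

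\textbf{Definition of the boundary map.} For \(\xi\in\bdry\Hn\), the composition \(F\circ r_{o,\xi}\) is continuous and, by \eqref{eq:general-rough-isometry-bounds}, an \((L,C)\)-quasi-geodesic ray. By Lemma~\ref{lem:Morse}(i) it converges to a unique point, which we call \(\bdry F(\xi)\), and it stays within Hausdorff distance \(M(L,C)\) of the geodesic ray from \(o'\) to that point.

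\textbf{Key Gromov-product estimate.} We would prove that there is a constant \(K=K(L,C,M)\) such that
\begin{align}
L^{-1}(z\mid\xi)_o-K\leq(F(z)\mid\bdry F(\xi))_{o'}\leq L(z\mid\xi)_o+K,
\nonumber
\end{align}
and similarly for pairs of boundary points. In \(\Hn\), the product \((a\mid b)_o\) equals, up to an additive constant \(\delta\), the distance from \(o\) to the geodesic \([a,b]\). The Morse lemma (segment, ray, and line versions) moves \(F([a,b])\) to within \(M\) of \([F(a),F(b)]\). The distance to the projection point then changes by at most a factor \(L\) and an additive constant.

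\textbf{Consequences.} The estimate gives continuity of \(\overline F\) at boundary points, using the ball-model formula \eqref{eq:mixed-visual-ball-formula}: \(z\to\xi\) iff \((z\mid\xi)_o\to\infty\). Uniqueness of the extension follows because \(\Hn\) is dense in the closed ball. For bijectivity, construct the quasi-inverse \(G\) with \(d(F\circ G,\Id)\leq C_0\) and \(d(G\circ F,\Id)\) bounded; \(G\) is again a rough isometry. After replacing it by a continuous one, for instance by piecewise geodesic interpolation on a net, its extension satisfies \(\bdry G\circ\bdry F=\Id\), because maps at bounded distance have the same boundary values. Continuity of \(\bdry F\) and \(\bdry G\) then makes \(\bdry F\) a homeomorphism; compactness of the sphere also gives this directly from a continuous bijection.

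\textbf{Quasisymmetry, the main obstacle.} The multiplicative estimate above only yields a power-type Hölder bound, \(\varrho_{o'}\lesssim\varrho_o^{1/L}\). That controls distances but not ratios of distances, so it does not by itself give \eqref{eq:metric-quasisymmetry}. For distinct \(x,a,b\) in the boundary, I would instead compare Gromov products based at a point on the geodesic \((\xi_a\,\xi_x)\). Concretely, let \(p\) be the point of the geodesic from \(x\) toward \(o\) with \(d(o,p)\approx\max\{(x\mid a)_o,(x\mid b)_o\}\). Then
\begin{align}
\log\frac{\varrho_o(x,a)}{\varrho_o(x,b)}=(x\mid b)_o-(x\mid a)_o,
\nonumber
\end{align}
and this is, up to \(O(\delta)\), a signed distance along the geodesic toward \(x\) between the projections of \(a\) and \(b\). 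These projections correspond to tripod centres of the ideal triangles \((x,a,o)\) and \((x,b,o)\).

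The map \(F\) sends the geodesic toward \(x\) to within \(M\) of the geodesic toward \(\bdry F(x)\), and it distorts signed distances along it by at most \(L\) plus a constant. It also sends tripod centres to within bounded distance of the image tripod centres, by the Morse lemma applied to the three sides. This yields
\begin{align}
\frac{\varrho_{o'}(F x,F a)}{\varrho_{o'}(F x,F b)}\leq\eta\!\left(\frac{\varrho_o(x,a)}{\varrho_o(x,b)}\right),
\qquad
\eta(t)=K'\max\{t^{L},t^{1/L}\},
\nonumber
\end{align}
which can be replaced by an increasing homeomorphism. The care needed is in handling the base-point change: \(o'\) versus points near \(F(o)\) alters visual metrics only by bounded factors, since \(|(\cdot\mid\cdot)_{o_1}-(\cdot\mid\cdot)_{o_2}|\leq d(o_1,o_2)\). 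If this direct route becomes messy, I would cite Bonk--Schramm's theorem that quasi-isometries between Gromov hyperbolic spaces induce power quasisymmetries on visual boundaries.
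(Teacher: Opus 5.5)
Your outline is correct, but it proves the statement by a different, largely self-contained route. The paper constructs nothing by hand. It notes that a rough isometry of \(\Hn\) is a rough quasi-isometry in the sense of Bonk--Schramm. From their Proposition~6.3 and the surrounding discussion it imports the induced bijection of sequential Gromov boundaries, together with continuity of that bijection and of its inverse; from their Theorem~6.5(2) it imports quasisymmetry. It then only checks compatibility with the closed-ball compactification: if \(z_j\to\xi\) in the ball, then \(\{z_j\}\) represents \(\xi\), hence \(F(z_j)\to\bdry F(\xi)\). Continuity of \(\overline F\) and uniqueness by density follow from this. You instead define \(\bdry F\) along geodesic rays via Lemma~\ref{lem:Morse}, prove two-sided multiplicative Gromov-product bounds, and read off continuity, injectivity and the H\"older bound from Lemma~\ref{lem:elementary-hyperbolic-boundary-estimates}. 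Your branch-point (tripod) argument for quasisymmetry is the standard argument behind power quasisymmetry of boundary maps, so on that point the two routes meet. What your version buys: it avoids the sequential-boundary formalism and its identification with the closed ball, it yields the explicit control function \(\eta(t)=K'\max\{t^L,t^{1/L}\}\), and it directly gives the ray-based description of \(\bdry F\) that the paper relies on later (equivariance of \(\phi\), the endpoints of \(u_s\circ\ell_i\) in Lemma~\ref{lem:basepoint_compact}, and Lemma~\ref{lem:boundary_V}). The paper's version buys brevity. Two details to tighten. First, coarse preservation of the \emph{sign} of the branch-point displacement needs justification, which the segment Morse lemma supplies: if \(p_a\) lies between \(o\) and \(p_b\), then \(F(p_a)\) is within \(M\) of \([F(o),F(p_b)]\), so \(d(F(o),F(p_b))\geq d(F(o),F(p_a))+d(F(p_a),F(p_b))-2M\). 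Second, the continuous quasi-inverse can be dropped; building one on all of \(\Hn\) for \(n\geq3\) is not a one-line step. Injectivity already follows from your bound \((\bdry F(\xi)\mid\bdry F(\zeta))_{o'}\leq L(\xi\mid\zeta)_o+K<\infty\) for \(\xi\neq\zeta\). Surjectivity follows from coarse surjectivity: given \(y_j\to\omega\in\bdry\Hn\), choose \(x_j\) with \(d(F(x_j),y_j)\leq C_0\); a subsequence satisfies \(x_j\to\xi\in\bdry\Hn\), and continuity of \(\overline F\) together with Lemma~\ref{lem:visual_facts}(a) gives \(\bdry F(\xi)=\omega\).
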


\begin{proof}
Real hyperbolic space is Gromov hyperbolic and geodesic, hence almost
geodesic.  In the terminology of Bonk--Schramm \cite{BonkSchramm}, the two inequalities in
\eqref{eq:general-rough-isometry-bounds} say that \(F\) is an
\((L,C)\)-rough quasi-isometric embedding.  Since \(F(\Hn)\) is
\(C_0\)-cobounded, \(F\) is a rough quasi-isometry in their sense; for
this qualitative assertion one may replace their single additive
parameter by \(\max\{C,C_0\}\).

Bonk--Schramm
\cite[Proposition~6.3\textup{(1)}, \textup{(2)}, and \textup{(4)}]{BonkSchramm}
therefore give a well-defined bijection
\begin{align}
\bdry F:\bdry\Hn\longrightarrow\bdry\Hn.
\nonumber
\end{align}

Their Theorem~6.5\textup{(2)} implies that this bijection satisfies the
quasisymmetry inequality \eqref{eq:metric-quasisymmetry} for metrics in the
canonical boundary gauges;
see \cite[pp.~283--284]{BonkSchramm}.  In fact, their theorem gives the
stronger power-type control, but that additional quantitative information
will not be needed here.  Bonk--Schramm also observe that both this bijection and its inverse are continuous; see
\cite[p.~281]{BonkSchramm}.  Hence \(\bdry F\) is a homeomorphism and is
quasisymmetric.

By \eqref{eq:visual-is-chordal}, the metrics \(\varrho_o\) and
\(\varrho_{o'}\) induce the usual closed-ball boundary topologies.

It remains only to relate the induced Gromov-boundary map to the
closed-ball compactification.  Define
\begin{align}
\overline F|_{\Hn}:=F,
\qquad
\overline F|_{\bdry\Hn}:=\bdry F.
\nonumber
\end{align}
Suppose that \(z_j\in\Hn\) and \(z_j\to\xi\in\bdry\Hn\).  Then
\(\{z_j\}\) converges at infinity and represents \(\xi\).  By the
definition of the induced map in Bonk--Schramm
\cite[Proposition~6.3\textup{(2)}, pp.~282--283]{BonkSchramm},
the sequence \(\{F(z_j)\}\) represents \(\bdry F(\xi)\).  Hence
\begin{align}
F(z_j)\longrightarrow\bdry F(\xi)
\nonumber
\end{align}
in the closed-ball compactification; compare
\Cref{lem:elementary-hyperbolic-boundary-estimates}.

Thus \(\overline F\) is continuous along interior sequences approaching
the boundary.  It is continuous on \(\Hn\) by the assumed continuity of
\(F\), and it is continuous on \(\bdry\Hn\) because \(\bdry F\) is a
homeomorphism.  

Splitting a general convergent sequence into its interior
and boundary terms proves continuity on the whole compactification.
Finally, the extension is unique because \(\Hn\) is dense in
\(\Hn\cup\bdry\Hn\) and the latter space is Hausdorff.
\end{proof}

The map \(\bar u:M\to N\) is the Eells--Sampson harmonic representative fixed in
\Cref{sec:statement}, and
\[
u:\Hn\longrightarrow\Hn
\]
is its \(\rho\)-equivariant harmonic lift from
\eqref{eq:equivariance}.  By \Cref{prop:rough}, \(u\) is a globally
Lipschitz rough isometry.  

Fix \(p\in\Hn\), and put \(q:=u(p)\). By
\Cref{prop:rough-boundary-extension}, the boundary extension
\begin{align}
\bdry u:
\bigl(\bdry\Hn,\varrho_p\bigr)
\longrightarrow
\bigl(\bdry\Hn,\varrho_q\bigr)
\nonumber
\end{align}
is a quasisymmetric homeomorphism.

Let
\begin{align}
\iota_p:\bdry\Hn\longrightarrow\mathbb S^{n-1},
\qquad
\iota_q:\bdry\Hn\longrightarrow\mathbb S^{n-1}
\nonumber
\end{align}
be ball-model boundary identifications based at \(p\) and \(q\),
respectively, as in
Definition~\ref{def:visual-metric-ball-identification}, and define
\begin{align}
\phi:=\iota_q\circ(\bdry u)\circ\iota_p^{-1}:\mathbb S^{n-1}\longrightarrow\mathbb S^{n-1}.
\label{eq:spherical-boundary-map}
\end{align}

By \eqref{eq:visual-round-bilipschitz}, the source and target ball-model
boundary identifications are bi-Lipschitz between the visual and round
metrics.  Quasisymmetry is therefore preserved under these identifications.

Consequently, there is an increasing homeomorphism
\begin{align}
\eta:[0,\infty)\longrightarrow[0,\infty)
\nonumber
\end{align}
such that
\begin{align}
\frac{d_{ \mathbb{S}^{n-1}}(\phi(\xi),\phi(\alpha))}
     {d_{ \mathbb{S}^{n-1}}(\phi(\xi),\phi(\beta))}
\leq
\eta\!\left(
\frac{d_{ \mathbb{S}^{n-1}}(\xi,\alpha)}{d_{ \mathbb{S}^{n-1}}(\xi,\beta)}
\right)
\label{eq:qs_sphere}
\end{align}
whenever \(\xi,\alpha,\beta\in \mathbb{S}^{n-1}\) are distinct.

Finally, the boundary map is equivariant.  If \(r\) is a ray ending at
\(\xi\) and $\gamma\in \Gamma$, then \(\gamma r\) ends at \(\gamma\xi\), and equivariance of
\(u\) gives
\begin{align}
u(\gamma r(t))=\rho(\gamma)u(r(t)).
\nonumber
\end{align}
The target curve on the right has endpoint
\(\rho(\gamma)\phi(\xi)\).  Since the definition of \(\bdry u\) is
independent of the initial point of the representing ray, the curve on
the left has endpoint \(\phi(\gamma\xi)\).  Hence
\begin{align}
\phi(\gamma\xi)=\rho(\gamma)\phi(\xi),
\qquad
\gamma\in\Gamma,
\quad
\xi\in \mathbb{S}^{n-1}.
\nonumber
\end{align}
%\ReviewUnusedLabel{eq:boundary_equivariance}

The boundary homeomorphism \(\phi\)
defined in \eqref{eq:spherical-boundary-map} is now fixed.  The purpose of the next
section is to prove, by an elementary argument and without invoking
the standard quasiconformal theorem asserting differentiability almost
everywhere and a Jacobian that is nonzero almost everywhere, that there
exists at least one
point \(\xi\in\mathbb S^{n-1}\) at which \(D\phi_\xi\) exists and is
invertible.  The set of all such points is formally defined in
\Cref{thm:E_nonempty}.

\section{Existence of a nondegenerate tangent}

We first work in Euclidean coordinates. 

\begin{proposition}\label{prop:elementary_qs_regular}
Let \(n\geq3\), let \(\Omega,\Omega'\subset\R^{n-1}\) be domains, and let
\begin{align}
f:\Omega\longrightarrow\Omega'\nonumber
\end{align}
be an \(\eta\)-quasisymmetric homeomorphism in the sense of
Definition~\ref{def:quasisymmetry}.  Then:
\begin{enumerate}[label=\textup{(\roman*)},leftmargin=3em]
\item
for every \(1\leq p<n-1\),
\begin{align}
f\in W^{1,p}_{\mathrm{loc}}(\Omega;\R^{n-1});\nonumber
\end{align}
\item
\(f\) is differentiable at Lebesgue-almost every point of
\(\Omega\), and its weak differential agrees almost everywhere with its
classical differential;
\item
at every point \(x\) where \(f\) is   differentiable,
\begin{align}
Df_x=0
\qquad\text{or}\qquad
Df_x\text{ is invertible}.
\label{eq:zero_or_invertible}
\end{align}
\end{enumerate}
\end{proposition}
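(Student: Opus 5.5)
The plan is to run the classical metric-to-analytic argument for quasisymmetric maps on \(\R^{m}\), \(m:=n-1\geq2\), keeping everything elementary. Part (iii) is independent and simple, so I dispose of it first. Suppose \(f\) is differentiable at \(x\), \(Df_x\neq0\), but \(Df_x\) is singular. Choose unit vectors \(v\in\ker Df_x\) and \(w\) with \(Df_x w\neq0\). For small \(t>0\) put \(a_t:=x+tv\) and \(b_t:=x+tw\), so that \(|x-a_t|=|x-b_t|=t\). Differentiability gives
\begin{align}
|f(a_t)-f(x)|=o(t),\qquad |f(b_t)-f(x)|=t|Df_xw|+o(t).
\nonumber
\end{align}
Since \(f\) is a homeomorphism, \(f(a_t)\neq f(x)\). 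Applying \eqref{eq:metric-quasisymmetry} to the triple \((x,b_t,a_t)\) then gives
\begin{align}
\frac{|f(b_t)-f(x)|}{|f(a_t)-f(x)|}\leq\eta(1),
\nonumber
\end{align}
whereas the left side tends to \(+\infty\). This contradiction proves \eqref{eq:zero_or_invertible}.

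For (i) and (ii), introduce the standard quantities
\begin{align}
L_f(x,r):=\sup_{|y-x|\leq r}|f(y)-f(x)|,\qquad \ell_f(x,r):=\inf_{|y-x|=r}|f(y)-f(x)|.
\nonumber
\end{align}
Quasisymmetry with equal distances gives \(L_f(x,r)\leq\eta(1)\,\ell_f(x,r)\) for all small \(r\). Because \(f\) is a homeomorphism of domains, \(f(B(x,r))\) contains the Euclidean ball \(B\bigl(f(x),\ell_f(x,r)\bigr)\). Hence
\begin{align}
\Bigl(\frac{L_f(x,r)}{r}\Bigr)^{m}\leq \eta(1)^m\,\omega_m^{-1}\,\frac{|f(B(x,r))|}{r^m}=\eta(1)^m\,\frac{\mu(B(x,r))}{|B(x,r)|},
\nonumber
\end{align}
where \(\mu(E):=|f(E)|\) is the pulled-back Lebesgue measure, a locally finite Radon measure on \(\Omega\). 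By the Lebesgue--Radon--Nikodym differentiation theorem, the upper volume derivative \(J_f(x):=\limsup_{r\to0}\mu(B(x,r))/|B(x,r)|\) is finite almost everywhere. Moreover \(\int_K J_f\leq\mu(K')<\infty\) on compact sets, with an absolutely continuous part only. Therefore
\begin{align}
\operatorname{Lip}f(x):=\limsup_{r\to0}L_f(x,r)/r
\nonumber
\end{align}
is finite almost everywhere and satisfies \((\operatorname{Lip}f)^m\leq\eta(1)^mJ_f\in L^1_{\mathrm{loc}}\). Stepanov's theorem (finite upper pointwise Lipschitz constant a.e.\ implies differentiability a.e.) then yields the a.e.\ differentiability in (ii).

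The main obstacle is upgrading this pointwise control to Sobolev regularity, i.e.\ proving that \(f\) is absolutely continuous on almost every line. Pointwise bounds on \(\operatorname{Lip}f\) alone do not exclude Cantor-type behaviour on lines. I would first try the direct ACL argument. Fix a coordinate direction and a cube \(Q\), and for a line segment \(\ell\) consider \(\mu\) of thin tubes around finite unions of small subintervals of \(\ell\). Using \(L_f(x,r)\leq\eta(1)\ell_f(x,r)\) together with a Vitali covering, bound \(\sum_i\operatorname{diam}f(I_i)\) by a power of the tube measures. Then Fubini applied to \(\mu\) in the transverse variables shows that \(f|_\ell\) is absolutely continuous for a.e.\ \(\ell\).

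Once \(f\) is ACL, its partial derivatives agree almost everywhere with the classical ones, and \(|\nabla f|\leq\operatorname{Lip}f\in L^m_{\mathrm{loc}}\). This gives \(f\in W^{1,p}_{\mathrm{loc}}\) for every \(p<m\) (indeed for \(p\leq m\)) by Hölder's inequality, which is (i). It also identifies the weak differential with the classical one almost everywhere, completing (ii). If the direct ACL estimate becomes delicate, the fallback is the theorem that a continuous map whose \(\operatorname{Lip}f\) is finite outside a \(\sigma\)-finite \(\mathcal H^{m-1}\)-set and lies in \(L^p_{\mathrm{loc}}\) belongs to \(W^{1,p}_{\mathrm{loc}}\). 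I would use this only if the elementary argument fails, since the paper aims to stay self-contained.
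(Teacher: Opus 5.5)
Your plan is correct in outline and takes a genuinely different, classical route for the Sobolev step; that step, however, is only sketched. Your argument for (iii) is the paper's Step~(5) in contrapositive form. Your a.e.\ differentiability argument (volume bound on $\operatorname{Lip}f$, Lebesgue--Radon--Nikodym, Stepanov) is a legitimate substitute for the paper's Steps~(1)--(3). Those steps use the truncated maximal density $g$, a $5r$-covering weak-type bound, and a hand-made Stepanov argument.

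The real difference is how Sobolev regularity is obtained. You go through the Gehring--V\"ais\"al\"a ACL theorem, which would give the sharp $f\in W^{1,n-1}_{\mathrm{loc}}$. The paper never proves ACL. It keeps the supremum over scales instead of your $\limsup$, so the same volume argument gives $|f(x)-f(y)|\le C|x-y|\,g(x)$ for \emph{all} $|x-y|<r_0$. Here $g^{n-1}$ is a maximal function of $\nu$, hence weak-$L^1$, hence $g\in L^p(V)$ for every $p<n-1$. Difference quotients are then dominated by the single function $Cg$, and dominated convergence plus a.e.\ differentiability identifies $Df(e_i)$ as the weak derivative. This loses the endpoint $p=n-1$, but $p=1$ (available precisely because $n\ge3$) is all that Theorem~\ref{thm:E_nonempty} uses. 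Your remark that pointwise bounds cannot exclude Cantor behaviour is true for the infinitesimal $\operatorname{Lip}f$. It is false for this scale-uniform bound, which your own estimate $L_f\le\eta(1)\ell_f$ already yields.

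If you keep the ACL route, the sketch must be made precise in two places. Fix a box $Q\Subset\widetilde Q\Subset\Omega$ and let $P$ be the projection onto the coordinate hyperplane orthogonal to the chosen direction.

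First, the transverse step is not Fubini, since $\mu$ is not yet known to be absolutely continuous. It is differentiation of the projected measure $\varphi(A):=\mu(P^{-1}A\cap\widetilde Q)$ on $\R^{m-1}$, which gives $\Phi(y):=\liminf_{t\to0}t^{1-m}\varphi(B^{m-1}(y,t))<\infty$ for a.e.\ $y$.

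Second, the estimate must carry a length factor. Take disjoint closed intervals $I_1,\dots,I_k$ in $P^{-1}(y)\cap Q$ and small $t\le\min_i|I_i|$. Cut each $I_i$ into pieces of length in $[t/2,t]$ and take the balls having these pieces as diameters; they are disjoint. Combining $L_f\le\eta(1)\ell_f$, $\omega_m\ell_f^m\le\mu(B)$ and H\"older gives
\[
\sum_i\operatorname{diam}f(I_i)\le C\Bigl(\sum_i|I_i|\Bigr)^{(m-1)/m}\bigl(t^{1-m}\varphi(B^{m-1}(y,t))\bigr)^{1/m},
\]
and letting $t\to0$ gives absolute continuity on a.e.\ line. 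The exponent $(m-1)/m>0$ is exactly where $m\ge2$ enters, since quasisymmetric maps of $\R$ can be singular, so it must appear explicitly. No Vitali covering is needed.

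Finally, your fallback is not available as stated. It needs $\operatorname{Lip}f<\infty$ off a $\sigma$-finite $\mathcal H^{m-1}$-set, and you have only shown finiteness a.e. The Cantor function shows that a.e.\ finiteness alone does not rule out singular behaviour.
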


\begin{proof}
\textbf{Step (1).} 
Fix bounded open sets
\begin{align}
V\Subset W\Subset\Omega.\nonumber
\end{align}
Choose \(r_0>0\) such that
\begin{align}
B(x,2r_0)\Subset W
\qquad
\text{for every }x\in V.\nonumber
\end{align}
Define a finite Borel measure on \(W\) by
\begin{align}
\nu(A):=\mathcal L^{n-1}(f(A)).\nonumber
\end{align}
Equivalently, \(\nu\) is the pushforward of Lebesgue measure on
\(f(W)\) by the continuous inverse \(f^{-1}\); hence it is indeed a
Borel measure.  Finiteness follows from \(\overline W\Subset\Omega\),
because \(f(\overline W)\) is compact.

Let \(x\in V\), let \(y\in\Omega\), and put
\begin{align}
r:=|x-y|<r_0.\nonumber
\end{align}
For every \(z\in\partial B(x,2r)\), quasisymmetry gives
\begin{align}
\frac{|f(x)-f(y)|}{|f(x)-f(z)|}
\leq
\eta(1/2).\nonumber
\end{align}
Since \(f\) is a homeomorphism and \(\overline{B(x,2r)}\Subset\Omega\),
\begin{align}
\partial f(B(x,2r))=f(\partial B(x,2r)).\nonumber
\end{align}
It follows that
\begin{align}
B\!\left(
 f(x),\frac{|f(x)-f(y)|}{\eta(1/2)}
\right)
\subset f(B(x,2r)).\nonumber
\end{align}
Indeed, the distance from \(f(x)\) to the boundary of the open set
\(f(B(x,2r))\) is at least the displayed radius.  Taking Euclidean
volumes, with \(\omega_{n-1}=\mathcal L^{n-1}(B(0,1))\), yields
\begin{align}
\omega_{n-1}
\left(
\frac{|f(x)-f(y)|}{\eta(1/2)}
\right)^{n-1}
\leq
\nu(B(x,2r)).\nonumber
\end{align}
Consequently,
\begin{align}
|f(x)-f(y)|
\leq
C_{n-1,\eta}|x-y|
\left(
\frac{\nu(B(x,2|x-y|))}{(2|x-y|)^{n-1}}
\right)^{1/(n-1)}.
\label{eq:qs_pointwise_density}
\end{align}

Define the truncated maximal density
\begin{align}
g(x)
:=
\left(
\sup_{0<s<2r_0}
\frac{\nu(B(x,s))}{s^{n-1}}
\right)^{1/(n-1)},
\qquad x\in V.
\label{eq:g_def}
\end{align}
The function \(g\) is measurable.  By continuity from below of \(\nu\),
the supremum may be taken over rational \(s\in(0,2r_0)\); for fixed
\(s\), the function \(x\mapsto\nu(B(x,s))\) is Borel, as follows by
approximating the ball indicator with continuous radial functions and
applying monotone convergence.
Then \eqref{eq:qs_pointwise_density} gives
\begin{align}
|f(x)-f(y)|
\leq
C_{n-1,\eta}|x-y|g(x)
\label{eq:hajlasz_one_point}
\end{align}
whenever \(x\in V\) and \(|x-y|<r_0\).

\textbf{Step (2).}
For the function \(g\) defined in \eqref{eq:g_def} and
\(\lambda>0\), put
\begin{align}
A_\lambda:=\{x\in V:g(x)>\lambda\}.\nonumber
\end{align}
For each \(x\in A_\lambda\), choose \(0<r_x<2r_0\) such that
\begin{align}
\nu(B(x,r_x))>\lambda^{n-1} r_x^{n-1}.\nonumber
\end{align}
The \(5r\)-covering lemma supplies a countable pairwise disjoint
subfamily
\begin{align}
B_i=B(x_i,r_i)\nonumber
\end{align}
such that
\begin{align}
A_\lambda\subset\bigcup_i5B_i.\nonumber
\end{align}
All \(B_i\) lie in \(W\).  Therefore
\begin{align}
\mathcal L^{n-1}(A_\lambda)
&\leq
5^{n-1}\omega_{n-1}\sum_i r_i^{n-1}\leq
\frac{5^{n-1}\omega_{n-1}}{\lambda^{n-1}}
\sum_i\nu(B_i)\leq
\frac{5^{n-1}\omega_{n-1}\nu(W)}{\lambda^{n-1}}.
\label{eq:g_weak_L_boundary}
\end{align}

Combining \eqref{eq:g_weak_L_boundary} with the trivial estimate
\(\mathcal L^{n-1}(A_\lambda)\leq\mathcal L^{n-1}(V)\), we have
\begin{align}
\mathcal L^{n-1}(A_\lambda)
\leq
\min\left\{
\mathcal L^{n-1}(V),
\frac{5^{n-1}\omega_{n-1}\nu(W)}
{\lambda^{n-1}}
\right\}.
\nonumber
\end{align}
For every \(p>0\), Tonelli's theorem gives
\begin{align}
\int_Vg^p\,\dd x
=
p\int_0^\infty
\lambda^{p-1}\mathcal L^{n-1}(A_\lambda)\,\dd\lambda.
\nonumber
\end{align}

Splitting the integral at \(\lambda=1\), for \(0<p<n-1\) we obtain
\begin{align}
\begin{aligned}
\int_Vg^p\,\dd x
&\leq
p\mathcal L^{n-1}(V)
\int_0^1\lambda^{p-1}\,\dd\lambda+
p\,5^{n-1}\omega_{n-1}\nu(W)
\int_1^\infty\lambda^{p-n}\,\dd\lambda\\
&=
\mathcal L^{n-1}(V)
+
\frac{
p\,5^{n-1}\omega_{n-1}\nu(W)
}{
n-1-p
}
<\infty.
\end{aligned}
\nonumber
\end{align}
Here the integral over \((0,1)\) is finite because \(p>0\), whereas
the integral over \((1,\infty)\) is finite because \(p<n-1\).
Thus
\begin{align}
g\in L^p(V)
\qquad
\text{for every }0<p<n-1.
\label{eq:g_Lp}
\end{align}

In particular, \(g(x)<\infty\) for almost every \(x\in V\), and
\eqref{eq:hajlasz_one_point} implies
\begin{align}
\varlimsup_{y\to x}
\frac{|f(y)-f(x)|}{|y-x|}
\leq C_{n-1,\eta}g(x)<\infty
\label{eq:finite_upper_lip}
\end{align}
for almost every \(x\in V\).

\textbf{Step (3).} We now prove directly that \eqref{eq:finite_upper_lip} implies  
differentiability almost everywhere.  For integers \(a,b\geq1\), let
\(E_{a,b}\subset V\) consist of all \(x\in V\) such that
\begin{align}
|f(x)|\leq a\nonumber
\end{align}
and
\begin{align}
|f(y)-f(x)|\leq a|y-x|
\label{eq:Eab_local_lip}
\end{align}
whenever \(y\in\Omega\) and \(0<|y-x|<1/b\).  These sets are measurable;
by continuity, the condition may be tested on a fixed countable dense
subset of \(\Omega\).  The set on which the upper pointwise Lipschitz
constant is finite is contained in
\begin{align}
\bigcup_{a,b\geq1}E_{a,b}.\nonumber
\end{align}

The restriction \(f|_{E_{a,b}}\) is globally Lipschitz.  Indeed, if
\(x,y\in E_{a,b}\) and \(|x-y|<1/b\), then
\eqref{eq:Eab_local_lip} applies; if \(|x-y|\geq1/b\), then
\begin{align}
|f(x)-f(y)|\leq2a\leq2ab|x-y|.\nonumber
\end{align}

For each nonempty
\(E_{a,b}\), let \(L_{a,b}:=\max\{a,2ab\}\).  For each coordinate function of
\(f|_{E_{a,b}}\), the McShane formula
\begin{align}
F^\alpha(z)
:=
\inf_{x\in E_{a,b}}
\bigl(f^\alpha(x)+L_{a,b}|z-x|\bigr)\nonumber
\end{align}
defines an \(L_{a,b}\)-Lipschitz extension to \(\R^{n-1}\).  Indeed, the
Lipschitz inequality for \(f^\alpha|_{E_{a,b}}\) shows that
\(F^\alpha=f^\alpha\) on \(E_{a,b}\), while the triangle inequality
shows that
\begin{align}
|F^\alpha(z)-F^\alpha(z')|\leq L_{a,b}|z-z'|.\nonumber
\end{align}
The vector-valued extension
\begin{align}
F=(F^1,\dots,F^{n-1})\nonumber
\end{align}
is \(\sqrt{n-1}\,L_{a,b}\)-Lipschitz, since every coordinate is
\(L_{a,b}\)-Lipschitz.  Rademacher's theorem therefore makes \(F\)
  differentiable almost everywhere.

Let \(x\in E_{a,b}\) be both a density point of \(E_{a,b}\) and a
  differentiability point of \(F\).  We claim that \(f\) itself is
differentiable at \(x\), with derivative \(DF_x\).  Given any sequence
\(y_k\to x\), the density of \(E_{a,b}\) at \(x\) allows us to choose
\(z_k\in E_{a,b}\) such that
\begin{align}
|z_k-y_k|=o(|y_k-x|).
\label{eq:zk_approx_yk}
\end{align}
Indeed, otherwise some \(\varepsilon>0\) and a subsequence would
satisfy
\begin{align}
B(y_k,\varepsilon|y_k-x|)\cap E_{a,b}=\varnothing,\nonumber
\end{align}
which would contradict that \(x\) is a density point.  

For large \(k\),
\eqref{eq:Eab_local_lip} at \(z_k\) and
\eqref{eq:zk_approx_yk} give
\begin{align}
|f(y_k)-f(z_k)|
\leq
a|y_k-z_k|
=
o(|y_k-x|).\nonumber
\end{align}
Since \(F=f\) on \(E_{a,b}\) and \(F\) is differentiable at \(x\),
\begin{align}
f(z_k)-f(x)
=
DF_x(z_k-x)+o(|z_k-x|).\nonumber
\end{align}
Together with \eqref{eq:zk_approx_yk}, this yields
\begin{align}
f(y_k)-f(x)
=
DF_x(y_k-x)+o(|y_k-x|).\nonumber
\end{align}
Thus \(f\) is   differentiable at \(x\).  The Lebesgue density
theorem, Rademacher's theorem, and the countable union over \(a,b\) show
that \(f\) is   differentiable almost everywhere in \(V\).
Since \(V\Subset\Omega\) was arbitrary, the same holds in \(\Omega\).

\textbf{Step (4).} Fix \(1\leq p<n-1\) and \(K\Subset V\).  For a coordinate vector \(e_i\)
and sufficiently small nonzero \(h\), define
\begin{align}
D_{i,h}f(x)
:=
\frac{f(x+he_i)-f(x)}{h},
\qquad x\in K.\nonumber
\end{align}
By \eqref{eq:hajlasz_one_point},
\begin{align}
|D_{i,h}f(x)|\leq C_{n-1,\eta}g(x)\nonumber
\end{align}
for almost every \(x\in K\).  At almost every differentiability point,
\begin{align}
D_{i,h}f(x)\longrightarrow Df_x(e_i)
\qquad(h\to0),\nonumber
\end{align}
and the same bound holds for \(|Df_x(e_i)|\).  Since \(g\in L^p(K)\) by \eqref{eq:g_Lp},
dominated convergence gives
\begin{align}
D_{i,h}f
\longrightarrow
Df(e_i)
\qquad
\text{in }L^p(K;\R^{n-1}).
\label{eq:difference_quotient_Lp}
\end{align}
For \(\psi\in C_c^\infty(K)\) and sufficiently small \(h\), a change of
variables gives
\begin{align}
\int_KD_{i,h}f(x)\psi(x)\,\dd x
=
\int_\Omega
f(x)
\frac{\psi(x-he_i)-\psi(x)}{h}
\,\dd x.\nonumber
\end{align}
Letting \(h\to0\) and using \eqref{eq:difference_quotient_Lp}, we obtain
\begin{align}
\int_KDf_x(e_i)\psi(x)\,\dd x
=
-\int_Kf(x)\partial_i\psi(x)\,\dd x.\nonumber
\end{align}
Hence \(Df(e_i)\) is the \(i\)-th weak derivative.  This proves
\begin{align}
f\in W^{1,p}_{\mathrm{loc}}(\Omega;\R^{n-1})\nonumber
\end{align}
and shows that the weak and classical differentials agree almost
everywhere.

\textbf{Step (5).} Zero or invertible.
Let \(x\in\Omega\) be a differentiability point, and let
\(v,w\in\mathbb S^{n-2}\).  For sufficiently small \(t\neq0\), the points
\(x+tv\) and \(x+tw\) are equally distant from \(x\).  Quasisymmetry gives
\begin{align}
|f(x+tv)-f(x)|
\leq
\eta(1)|f(x+tw)-f(x)|.\nonumber
\end{align}
Divide by \(|t|\) and let \(t\to0\):
\begin{align}
|Df_x(v)|
\leq
\eta(1)|Df_x(w)|.
\label{eq:linear_distortion_at_derivative}
\end{align}
If \(Df_x(w)=0\) for one unit vector \(w\), then
\eqref{eq:linear_distortion_at_derivative} implies \(Df_x(v)=0\) for every
unit vector \(v\), so \(Df_x=0\).  Otherwise \(Df_x\) has trivial kernel;
because its domain and range both have dimension \(n-1\), it is invertible.
This proves \eqref{eq:zero_or_invertible}.
\end{proof}

\begin{theorem}\label{thm:E_nonempty}
Let \(n\geq 3\), define
\begin{align}
E_\phi:=\left\{\xi\in \mathbb{S}^{n-1}: D\phi_\xi\text{ exists and }D\phi_\xi\text{ is invertible}\right\}.
\nonumber
\end{align}
Then \(E_\phi\neq\varnothing\).
\end{theorem}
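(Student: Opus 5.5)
Pass to Euclidean charts and apply \Cref{prop:elementary_qs_regular}. By (ii) and (iii), $\phi$ is differentiable almost everywhere, and at each such point the differential is either zero or invertible. So it suffices to show that $D\phi=0$ cannot hold almost everywhere.

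\textbf{Setting up the charts.} Choose a point $\xi_0\in\mathbb S^{n-1}$ and a small open cap $U$ around it. Use stereographic projections (or a bi-Lipschitz chart) $\sigma:U\to\Omega\subset\R^{n-1}$, and a similar chart $\sigma'$ on a neighborhood of $\phi(\overline U)$. Since these charts are bi-Lipschitz on compact pieces, \eqref{eq:qs_sphere} shows that $f:=\sigma'\circ\phi\circ\sigma^{-1}$ is an $\eta'$-quasisymmetric homeomorphism between domains of $\R^{n-1}$. Also, differentiability and invertibility of $D\phi_\xi$ are equivalent to the same properties of $Df$ at $\sigma(\xi)$. Then \Cref{prop:elementary_qs_regular} applies to $f$.

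\textbf{Main step: ruling out $Df=0$ a.e.} Suppose, for contradiction, that $E_\phi=\varnothing$. Then $Df=0$ almost everywhere in $\Omega$. By (i) and (ii), $f\in W^{1,1}_{\mathrm{loc}}$ and its weak gradient equals the classical one, hence is zero a.e. A $W^{1,1}_{\mathrm{loc}}$ map with vanishing weak gradient on a connected domain is a.e. constant. Concretely, mollify: $f*\psi_\varepsilon$ has gradient $(Df)*\psi_\varepsilon=0$, so it is constant on connected compact subdomains, and it converges locally uniformly to $f$ because $f$ is continuous. Hence $f$ is constant on a ball, which contradicts injectivity of the homeomorphism $f$. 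Therefore $D\phi_\xi$ exists and is invertible at almost every point of $U$ where it is nonzero, and that set has positive measure. In particular, $E_\phi\neq\varnothing$.

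\textbf{Where care is needed.} The logic is short, so the main obstacle is making sure the facts used are actually supplied by \Cref{prop:elementary_qs_regular}. First, the weak derivative must equal the classical derivative a.e. This is (ii), whose proof uses the $L^p$ domination from $g$ with $p\ge1$, which requires $n-1>1$, i.e.\ $n\geq3$. Second, quasisymmetry must transfer through the spherical charts. For that I would note that compositions with bi-Lipschitz maps preserve quasisymmetry with a modified $\eta$: if $\sigma$ is $L$-bi-Lipschitz, distance ratios change by at most a factor $L^2$, giving $\eta'(t)=L'^2\eta(L^2t)$. One should also shrink $U$ so that $\phi(U)$ lies in the domain of $\sigma'$, which is possible by continuity.

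An alternative route avoids Sobolev theory. If $Df=0$ a.e., the area formula for the Lipschitz pieces $F$ on $E_{a,b}$ gives $\mathcal L^{n-1}(f(E_{a,b}))=0$. The complement of $\bigcup E_{a,b}$ is null, but one would then need the Lusin (N) property to conclude that $f(\Omega)$ is null. That property is harder to get, so I prefer the weak-derivative argument above.
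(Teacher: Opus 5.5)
Your proposal is correct and follows essentially the same route as the paper: you pass to charts and apply \Cref{prop:elementary_qs_regular}, assume $E_\phi=\varnothing$ to force $Df=0$ almost everywhere, and then use $W^{1,1}_{\mathrm{loc}}$ regularity, with weak and classical differentials agreeing, to make $f$ locally constant, contradicting injectivity. The only difference is that you prove the constancy step directly by mollification, whereas the paper cites the distributional constancy theorem of Lieb--Loss.
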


\begin{proof}
\textbf{Step (1)}. By \eqref{eq:qs_sphere}, \(\phi\) is quasisymmetric for the
round geodesic distance.

For each \(\xi\in \mathbb{S}^{n-1}\), choose smooth source and target charts
\begin{align}
\alpha:U\longrightarrow\Omega\subset\R^{n-1},
\qquad
\beta:V\longrightarrow\widetilde\Omega'\subset\R^{n-1},
\nonumber
\end{align}
and precompact neighborhoods
\(\xi\in U_0\Subset U_1\Subset U\) such that
\(\phi(\overline{U_1})\subset V\).  On the compact sets
\(\overline{U_1}\) and \(\phi(\overline{U_1})\), the charts and their
inverses are bi-Lipschitz.  Hence the restricted coordinate map
\begin{align}
\beta\circ\phi\circ\alpha^{-1}:
\alpha(U_1)\longrightarrow\beta(\phi(U_1))
\nonumber
\end{align}
is a quasisymmetric homeomorphism between Euclidean domains, with a control
function depending only on the original control function and the relevant
bi-Lipschitz constants.

Applying Proposition~\ref{prop:elementary_qs_regular} to this restricted map
gives all conclusions on \(U_0\).  Compactness of \( \mathbb{S}^{n-1}\) supplies finitely
many such sets \(U_0\) covering \( \mathbb{S}^{n-1}\).

Smooth changes of coordinates preserve   differentiability and the rank of the differential, while Sobolev regularity is invariant under such coordinate changes. Hence all the asserted conclusions hold
globally on \( \mathbb{S}^{n-1}\).

Then, in smooth local charts,
\begin{align}
\phi\in W^{1,p}_{\mathrm{loc}}
\qquad
\text{for every }1\leq p<n-1.\label{regularity-of-phi}
\end{align}
The map \(\phi\) is   differentiable at round-almost every point,
and at every differentiability point
\begin{align}
D\phi_\xi=0
\qquad\text{or}\qquad
D\phi_\xi\text{ is invertible}.\nonumber
\end{align}
The weak and classical differentials agree almost everywhere in every
chart.

\textbf{Step (2)}.  Now assume for contradiction that
\begin{align}
E_\phi=\varnothing.\nonumber
\end{align}

Then $D\phi_\xi=0$ for round-almost every $\xi\in \mathbb{S}^{n-1}$.

Fix \(\xi_0\in \mathbb{S}^{n-1}\).  Choose a connected source neighborhood
\(U\ni\xi_0\) and smooth coordinate charts
\begin{align}
\alpha:U\longrightarrow\Omega\subset\R^{n-1},
\qquad
\beta:V\longrightarrow\widetilde\Omega'\subset\R^{n-1}\nonumber
\end{align}
such that \(\phi(U)\Subset V\).  Put
\begin{align}
f:=\beta\circ\phi\circ\alpha^{-1}:
\Omega\longrightarrow\Omega':=\beta(\phi(U)).\nonumber
\end{align}
Smooth coordinate changes preserve null sets locally and preserve the
rank of a differential.  Hence 
\begin{align}
Df=0
\qquad\text{for Lebesgue-almost every point of }\Omega.
\label{eq:Df_zero_ae}
\end{align}

Since \(n\geq3\), we have \(1<n-1\).  Applying
\eqref{regularity-of-phi} with \(p=1\), we obtain
\begin{align}
f\in W^{1,1}_{\mathrm{loc}}(\Omega;\R^{n-1}).
\nonumber
\end{align}
Moreover, its weak differential agrees almost everywhere with its
classical differential.

%Thus the weak differential of \(f\) vanishes almost
%everywhere by \eqref{eq:Df_zero_ae}.

By \eqref{eq:Df_zero_ae}, the weak
differential \(D^{w}f\) vanishes almost everywhere in \(\Omega\).
Equivalently, for every coordinate function \(f^\alpha\) of
\(f\) and every \(i\in\{1,\ldots,n-1\}\),
\begin{align}
\partial_i f^\alpha=0
\qquad\text{in }\mathcal D'(\Omega).
\nonumber
\end{align}

By the classical constancy theorem for distributions
\cite[Theorem~6.11]{LiebLoss}, applied to the coordinate functions of \(f\),
the map \(f\) is almost everywhere equal to a constant vector on each
connected component of \(\Omega\). 

Since \(f\) is continuous, this equality holds everywhere on each component. Consequently, \(f\) is
locally constant.  This contradicts that \(\phi\) is a homeomorphism.
The contradiction proves \(E_\phi\neq\varnothing\).
\end{proof}

\begin{remark}[The precise role of \(n\geq3\)]
In the present proof, the assumption \(n\geq3\) is used only in the
passage from quasisymmetry of the boundary map to the existence of a
nondegenerate boundary tangent.

The preceding boundary-dimensional obstruction has a classical
antecedent, but the present proof uses it more economically.  Mostow
explicitly observed that \(n>2\) entered his original proof twice:
first, in the theorem that the Jacobian of a quasiconformal boundary
map is positive almost everywhere, and second, in the
conformal-capacity argument that forces its tangent map to be
conformal; see
\cite[Theorem~9.4 and \S12, pp.~94, 99--102]{Mostow1968}.
The first input is replaced here by
Proposition~\ref{prop:elementary_qs_regular} and
Theorem~\ref{thm:E_nonempty}, which produce the single nondegenerate
tangent needed below.  The second input is unnecessary in the present
argument: the tangent \(A\) may be anisotropic, while its exact
harmonic extension \(H_A\) nevertheless satisfies
\begin{align}
|dH_A|^2=n.
\nonumber
\end{align}

By contrast, in the real-hyperbolic Besson--Courtois--Gallot
natural-map argument, the restriction \(n\geq3\) is concentrated in
the determinant inequality
\begin{align}
\frac{\det H}{\det(I-H)^2}
\leq
\left(\frac{n}{(n-1)^2}\right)^n,
\qquad
\operatorname{tr}H=1,
\nonumber
\end{align}
whose equality case is \(H=\frac1n I\); compare
\cite[pp.~637--639]{BessonCourtoisGallot}.  This inequality is false
when \(n=2\).  Thus the surface exception appears in different
analytic modules in the two proofs: boundary regularity here and the
natural-map Jacobian estimate in the barycenter method.
\end{remark}

% ================================================================
\section{Boundary-tangent renormalization of the harmonic map}
\label{sec:tangent}
% ================================================================

The purpose of this section is to recenter the harmonic map along a
geodesic escaping to one ideal endpoint and to identify the resulting
boundary-tangent limit.  

We first record three elementary facts about real hyperbolic space.  They
are included to make the two compactness passages below explicit.

\begin{lemma}\label{lem:visual_facts}
Let \(d\geq2\).
\begin{enumerate}[label=\textup{(\alph*)},leftmargin=3em]
\item\label{item:bounded_same_endpoint}
If \(x_j,y_j\in\mathbb H^d\),
\(\sup_jd(x_j,y_j)<\infty\), and
\(x_j\to\zeta\in\partial_\infty\mathbb H^d\), then
\(y_j\to\zeta\).

\item\label{item:moving_geodesics}
Suppose \(a_j,b_j\in\partial_\infty\mathbb H^d\),
\(a_j\to a\), \(b_j\to b\), and \(a\neq b\).  If
\(z_j\in(a_jb_j)\) lies on the complete geodesic with endpoints
\(a_j,b_j\), and \(z_j\) leaves every compact subset, then every boundary
accumulation point of \(z_j\) belongs to \(\{a,b\}\).

\item\label{item:moving_rays}
If \(p_j\to p\in\mathbb H^d\) and
\(\zeta_j\to\zeta\in\partial_\infty\mathbb H^d\), then the unit-speed
geodesic rays from \(p_j\) to \(\zeta_j\) converge, after using the common
parameter origin \(0\), in \(C^\infty\) on every compact parameter
interval to the ray from \(p\) to \(\zeta\).
\end{enumerate}
\end{lemma}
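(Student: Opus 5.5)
I would prove all three items in the Poincaré ball model $(\mathbb B^d,g_{\mathbb B})$. There the closed-ball compactification $\overline{\mathbb B^d}$ realizes $\mathbb H^d\cup\partial_\infty\mathbb H^d$, and convergence to boundary points is Euclidean convergence. The basic tool is the distance formula: for $x\in\mathbb B^d$ with $|x|<1$, the hyperbolic ball of radius $R$ about $x$ has Euclidean diameter at most $C(R)(1-|x|)$. This follows from the formula $d(0,z)=\log\frac{1+|z|}{1-|z|}$ recalled in the proof of Lemma~\ref{lem:elementary-hyperbolic-boundary-estimates}, together with transitivity of the Möbius isometry group. I would justify it concretely by applying a Möbius isometry $T_x$ with $T_x(0)=x$: its Euclidean derivative on the ball of radius $\tanh(R/2)$ about $0$ is comparable to $1-|x|^2$.

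For \ref{item:bounded_same_endpoint}, put $R:=\sup_j d(x_j,y_j)$. Since $x_j\to\zeta\in\mathbb S^{d-1}$, we have $|x_j|\to1$. The estimate above then gives $|x_j-y_j|_{\mathrm{Euc}}\le C(R)(1-|x_j|)\to0$, so $y_j\to\zeta$.

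For \ref{item:moving_geodesics}, complete geodesics in the ball model are arcs of circles, or diameters, meeting $\mathbb S^{d-1}$ orthogonally. Hence $(a_jb_j)\cup\{a_j,b_j\}$ is the orthogonal circular arc $\sigma_j$ with endpoints $a_j,b_j$. This arc depends continuously, in Hausdorff distance on $\overline{\mathbb B^d}$, on the endpoint pair as long as the endpoints stay apart, and $a\neq b$ guarantees this for large $j$. So $\sigma_j\to\sigma:=(ab)\cup\{a,b\}$ in the Hausdorff sense. Any accumulation point $w$ of $z_j$ therefore lies on $\sigma$. If $w$ lies in $\mathbb S^{d-1}$, which is the case when $z_j$ leaves every compact set, then $w\in\sigma\cap\mathbb S^{d-1}=\{a,b\}$. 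To avoid relying on the circle picture, an alternative is to conjugate by Möbius maps $T_j\to T$ sending $a_j,b_j$ to fixed antipodal points and reduce to a diameter.

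For \ref{item:moving_rays}, a ray is determined by its initial point $p_j$ and initial unit vector $v_j\in T_{p_j}\mathbb H^d$; it is $t\mapsto\exp_{p_j}(tv_j)$. Smooth dependence of geodesics on initial data, via ODE theory for the geodesic flow, gives $C^\infty$ convergence on compact parameter intervals once $v_j\to v$. I would establish $v_j\to v$ by compactness of the unit tangent bundle over a neighborhood of $p$. Pass to any subsequential limit $v'$ of $v_j$. The rays $r_j$ then converge locally to the ray $r'$ from $p$ with direction $v'$, and I must show $r'(\infty)=\zeta$. This step is the main obstacle, since it requires uniformity at infinity. I would handle it through the explicit endpoint map: with $T_{p_j}$ the Möbius isometry sending $0$ to $p_j$, we have $\zeta_j=T_{p_j}(\text{the unit vector corresponding to }v_j)$. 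Because $T_{p_j}\to T_p$ uniformly on $\overline{\mathbb B^d}$, passing to the limit gives $\zeta=T_p(v')$, so $r'(\infty)=\zeta$. Uniqueness of the ray from $p$ to $\zeta$ then forces $v'=v$, and the full sequence converges.
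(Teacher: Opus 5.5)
Your proposal is correct and follows essentially the same route as the paper. Both arguments work in the Poincar\'e ball model and proceed item by item:
\begin{enumerate}
\item[(a)] Bounded hyperbolic balls near the sphere have Euclidean diameter tending to zero. The paper reads this off the formula $\sinh\frac{d(x,y)}2=|x-y|/\sqrt{(1-|x|^2)(1-|y|^2)}$, while you use a Möbius derivative bound; this is a minor variation.
\item[(b)] The closed orthogonal arc depends continuously, in the Hausdorff topology, on its endpoint pair.
\item[(c)] The initial direction is continuous, and ODE dependence then gives the convergence. Your compactness-and-endpoint-map argument supplies a justification for the continuity of the initial direction in $(p,\zeta)$, which the paper asserts without proof.
\end{enumerate}
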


\begin{proof}
Use the Poincar\'e ball model and its closed-ball compactification.  For
\ref{item:bounded_same_endpoint}, the distance formula
\[
\sinh\frac{d(x,y)}2
=
\frac{|x-y|}{\sqrt{(1-|x|^2)(1-|y|^2)}}
\]
shows that a hyperbolically bounded ball about a point approaching the
Euclidean unit sphere has Euclidean diameter tending to zero.  Thus
bounded-distance sequences have the same ideal limit.

For \ref{item:moving_geodesics}, the closure of a hyperbolic geodesic is
the Euclidean circular arc or diameter orthogonal to the unit sphere.  As
long as the two endpoints remain distinct, that closed arc depends
continuously, in the Hausdorff topology, on its ordered endpoint pair.  Hence
the only points of the limiting closed arc lying on the unit sphere are
\(a\) and \(b\).

For \ref{item:moving_rays}, the initial unit vector at \(p\) pointing to
an ideal endpoint \(\zeta\) depends smoothly on \(p\) and continuously on
\(\zeta\) in the ball model.  Continuous dependence for the geodesic ODE,
followed by differentiation of the ODE, gives \(C^\infty\)-convergence on
compact parameter intervals.
\end{proof}

By \Cref{thm:E_nonempty}, choose
\begin{align}
\xi\in E_\phi.
\nonumber
\end{align}

By the definition of \(E_\phi\), the spherical boundary map
\(\phi:\mathbb S^{n-1}\to\mathbb S^{n-1}\) is differentiable at
\(\xi\) in smooth boundary charts, and
\begin{align}
D\phi_\xi:
T_\xi\mathbb S^{n-1}
\longrightarrow
T_{\phi(\xi)}\mathbb S^{n-1}
\nonumber
\end{align}
is invertible.

Recall that
\begin{align}
\phi
=
\iota_q\circ(\bdry u)\circ\iota_p^{-1},
\nonumber
\end{align}
where
\begin{align}
\iota_p,\iota_q:
\bdry\Hn\longrightarrow\mathbb S^{n-1}
\nonumber
\end{align}
are the ball-model boundary identifications introduced above.  Let
\begin{align}
\widehat\xi
:=
\iota_p^{-1}(\xi)
\in\bdry\Hn
\nonumber
\end{align}
be the corresponding ideal boundary point of the source copy of
\(\Hn\).  Choose a unit-speed geodesic ray
\begin{align}
r:[0,\infty)\longrightarrow\Hn,
\qquad
r(\infty)=\widehat\xi.
\nonumber
\end{align}

We first make a fixed boundary-coordinate normalization of the map.
Identify both the source and the target with
the upper half-space model
\begin{align}
\Hn
&=
\left\{
(x,y):x\in\R^{n-1},\ y>0
\right\},
&
g_{\mathbb H}
&=
\frac{|dx|^2+dy^2}{y^2}.
\nonumber
\end{align}
In this model, the ideal boundary has the standard smooth conformal
identification
\begin{align}
\bdry\Hn
=
\R^{n-1}\cup\{\infty\}
\cong
\mathbb S^{n-1},
\nonumber
\end{align}
where the last identification may be realized by inverse stereographic
projection.  

We denote the origin of \(\R^{n-1}\) by
\(\mathbf{0}\). In particular, \(\mathbf{0}\in\R^{n-1}\) and \(\infty\) are ideal
boundary points, and the finite part \(\R^{n-1}\) is a smooth boundary
chart.

Let \(\widehat\xi^{-}\) be the negative endpoint of the complete
geodesic containing \(r\).  Choose isometries
\begin{align}
G,H\in\operatorname{Isom}(\Hn)
\nonumber
\end{align}
such that
\begin{align}
(\bdry G)(\widehat\xi)
&= \mathbf{0},
&
(\bdry G)(\widehat\xi^{-})
&=\infty,
\nonumber\\
(\bdry H)\bigl((\bdry u)(\widehat\xi)\bigr)
&=\mathbf{0},
&
(\bdry H)\bigl((\bdry u)(\widehat\xi^{-})\bigr)
&=\infty.
\nonumber
\end{align}
Define
\begin{align}
\widetilde u
&:=
H\circ u\circ G^{-1},
&
\widetilde r
&:=
G\circ r.
\nonumber
\end{align}
The boundary map of \(\widetilde u\) is
\begin{align}
\widetilde\phi
:=
\bdry\widetilde u
=
(\bdry H)\circ(\bdry u)\circ(\bdry G)^{-1}.
\nonumber
\end{align}

To compare this normalized boundary map with the spherical map
\(\phi\), set
\begin{align}
\alpha
&:=
\iota_p\circ(\bdry G)^{-1}:
\R^{n-1}\cup\{\infty\}
\longrightarrow
\mathbb S^{n-1},
\nonumber\\
\beta
&:=
(\bdry H)\circ\iota_q^{-1}:
\mathbb S^{n-1}
\longrightarrow
\R^{n-1}\cup\{\infty\}.
\nonumber
\end{align}
Then
\begin{align}
\widetilde\phi
=
\beta\circ\phi\circ\alpha,
\qquad
\alpha(\mathbf{0})=\xi,
\qquad
\beta(\phi(\xi))=\mathbf{0}.
\nonumber
\end{align}
The maps \(\alpha\) and \(\beta\) are smooth conformal
diffeomorphisms of the corresponding ideal boundaries.  Therefore the
chain rule gives
\begin{align}
D\widetilde\phi_{\mathbf{0}}
=
D\beta_{\phi(\xi)}
\circ
D\phi_\xi
\circ
D\alpha_{\mathbf{0}}.
\nonumber
\end{align}
Since all three factors on the right-hand side are invertible,
\(D\widetilde\phi_{\mathbf{0}}\) is invertible.

\begin{notation}
%\ReviewUnusedLabel{notation:phi-tilde-phi}
{To avoid introducing permanent new notation, from now on we write
\(u\), \(r\), and \(\phi\) for
\(\widetilde u\), \(\widetilde r\), and \(\widetilde\phi\),
respectively, and we write \(\xi^{-}\) for the negative endpoint of the
complete geodesic containing the normalized ray.  
}
\end{notation}

Thus
\begin{equation}\label{eq:normalized_endpoints}
r(\infty)=\mathbf{0},
\qquad
\xi^{-}=\infty,
\qquad
\phi(\mathbf{0})
=\mathbf{0},
\qquad
\phi(\infty)=\infty.
\end{equation}

After translating the ray parameter, we may assume
\begin{equation}\label{eq:vertical_ray}
r(t)=(\mathbf{0},e^{-t}).
\end{equation}
Hyperbolic isometries preserve harmonicity, the rough-isometry constants,
and the Hilbert--Schmidt norm of the differential.

By \eqref{eq:normalized_endpoints}, \(\phi(\infty)=\infty\).  Since
\(\phi\) is a boundary homeomorphism, its restriction to the finite
boundary chart is a map
\begin{align}
\phi|_{\R^{n-1}}:
\R^{n-1}\longrightarrow\R^{n-1}.
\nonumber
\end{align}

We define
\begin{align}
A
:=
D\left(\phi|_{\R^{n-1}}\right)_{\mathbf{0}}
=
D\phi_{\mathbf{0}}
\in\operatorname{GL}(n-1,\R),
\nonumber
\end{align}
where the second expression uses the identity chart on the finite boundary.

We next perform a scale-dependent boundary-tangent renormalization.
For \(s>0\), let
\begin{align}
\delta_s(x,y):=(sx,sy):\Hn\longrightarrow\Hn.
\nonumber
\end{align}
This is a hyperbolic isometry.

Define
\begin{equation}\label{eq:Us_def}
u_s:=\delta_{1/s}\circ u\circ\delta_s.
\end{equation}
Its boundary map is
\begin{equation}\label{eq:fs_def}
\phi_s(z)=\frac{\phi(sz)}s,
\qquad
\phi_s(\infty)=\infty.
\end{equation}

Differentiability at \(\mathbf{0}\) says
\[
\phi(z)=Az+\varepsilon(z)|z|,
\qquad
\varepsilon(z)\longrightarrow
\mathbf{0}
\quad
\bigl(z\to\mathbf{0}\bigr).
\]

By \eqref{eq:fs_def}, if \(K\Subset\R^{n-1}\), then
\[
\lim_{s\rightarrow 0}\sup_{x\in K}|\phi_s(x)-Ax| \leq \lim_{s\rightarrow 0}\left(\sup_{|z|\leq s\sup_K|x|}|\varepsilon(z)|\right)\sup_K|x| =0.
\]
Therefore
\begin{equation}\label{eq:fs_to_A}
\phi_s\longrightarrow A
\qquad\text{locally uniformly on }\R^{n-1},
\end{equation}
while \(\phi_s(\infty)=A(\infty)=\infty\).  

All \(u_s\) are harmonic,
share the global Lipschitz constant of \(u\), and share its two-sided
rough-isometry and coarse-surjectivity constants.

\begin{lemma}\label{lem:basepoint_compact}
Let $o:=(\mathbf{0},1)\in\Hn$.  Then the set
$\{u_s(o):0<s\leq1\}$ is contained in a compact subset of \(\Hn\).
\end{lemma}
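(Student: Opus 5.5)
The plan is to locate \(u_s(o)\) using two quasi-geodesics whose images are pinned down by the Morse lemma, and then to exclude escape to infinity using \Cref{lem:visual_facts}. First I would note that \(u_s(o)=\delta_{1/s}\bigl(u(\mathbf{0},s)\bigr)\) depends continuously on \(s\in(0,1]\). Hence it suffices to rule out a sequence \(s_j\to0\) along which \(u_{s_j}(o)\) leaves every compact subset of \(\Hn\). The compactness on \([s_0,1]\) for each fixed \(s_0>0\) is automatic.

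Fix a unit vector \(e\in\R^{n-1}\). The point \(o=(\mathbf{0},1)\) lies on two complete geodesics: the vertical axis \(\sigma_0\) with endpoints \(\mathbf{0},\infty\), and the semicircle \(\sigma_e\) with endpoints \(\pm e\). Each \(u_s\) is a continuous rough isometry with the constants \(L,C\) of \(u\), so \(u_s\circ\sigma_0\) and \(u_s\circ\sigma_e\), parametrized by arclength, are continuous \((L,C)\)-quasi-geodesic lines. Part (ii) of \Cref{lem:Morse} gives a constant \(M=M(L,C)\), independent of \(s\), with the following property. The point \(u_s(o)\) lies within \(M\) of the geodesic joining the endpoints of \(u_s\circ\sigma_0\), and within \(M\) of the geodesic joining the endpoints of \(u_s\circ\sigma_e\). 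By the construction of the boundary extension in \Cref{prop:rough-boundary-extension}, these endpoints are the values of \(\phi_s\) at the ideal endpoints. By \eqref{eq:normalized_endpoints} and \eqref{eq:fs_def}, we have \(\phi_s(\mathbf{0})=\mathbf{0}\) and \(\phi_s(\infty)=\infty\). So \(u_s(o)\) is within \(M\) of a point \(a_s\) of the fixed vertical axis \((\mathbf{0}\,\infty)\), and within \(M\) of a point \(b_s\) of the geodesic \((\phi_s(-e)\,\phi_s(e))\).

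Now suppose \(u_{s_j}(o)\) escapes every compact set. Passing to a subsequence, it converges to some \(\zeta\in\bdry\Hn\). By \Cref{lem:visual_facts}\ref{item:bounded_same_endpoint}, both \(a_{s_j}\to\zeta\) and \(b_{s_j}\to\zeta\). Since \(a_{s_j}\) moves on the fixed axis, this forces \(\zeta\in\{\mathbf{0},\infty\}\).

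On the other hand, \eqref{eq:fs_to_A} gives \(\phi_{s_j}(\pm e)\to\pm Ae\). These two points are distinct and finite because \(A\in\operatorname{GL}(n-1,\R)\). Hence \Cref{lem:visual_facts}\ref{item:moving_geodesics} applied to \(b_{s_j}\) gives \(\zeta\in\{Ae,-Ae\}\). But \(\pm Ae\neq\mathbf{0}\), and \(\pm Ae\) are finite points, so they are distinct from \(\infty\). This contradiction proves that \(\{u_s(o):0<s\le1\}\) is precompact in \(\Hn\).

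The main point needing care is the uniformity of the Morse constant in \(s\). This holds because \(\delta_s\) and \(\delta_{1/s}\) are isometries, so every \(u_s\) has exactly the rough-isometry constants of \(u\). A second point to check is the identification of the endpoints of \(u_s\circ\sigma\) with the values of \(\phi_s\). This identification comes directly from the continuity of the extension \(\overline{u_s}\) in \Cref{prop:rough-boundary-extension}.
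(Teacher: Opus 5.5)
Your proposal is correct and follows essentially the same route as the paper. Both arguments use the two geodesics through \(o\) (the vertical axis and the semicircle with endpoints \(\pm e\)), the line version of the Morse lemma with constants uniform in \(s\), and \Cref{lem:visual_facts} to reach the contradiction between \(\zeta\in\{\mathbf{0},\infty\}\) and \(\zeta\in\{-Ae,Ae\}\). The only difference is cosmetic: you reduce to \(s_j\to0\) at the outset by continuity in \(s\), whereas the paper makes that reduction inside the contradiction argument.
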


\begin{proof}
\textbf{Step (1)}. Fix a unit vector \(e\in\R^{n-1}\).  Since \(A\) is invertible,
\(Ae\neq\mathbf{0}\).  Let \(L_0\) be the complete geodesic with endpoints
\(\mathbf{0},\infty\), and let \(L_1\) be the complete geodesic with endpoints
\(-e,e\).  Both pass through \(o\).

For \(i\in\{0,1\}\), choose a unit-speed parameterization
\begin{align}
\ell_i:\mathbb R\longrightarrow L_i,
\qquad
\ell_i(0)=o.
\nonumber
\end{align}
Since all the maps \(u_s\) have the same rough-isometry constants
\(L\geq1\) and \(C\geq0\), we have
\begin{align}
L^{-1}|t-t'|-C
&\leq
d\bigl(u_s(\ell_i(t)),u_s(\ell_i(t'))\bigr)
\leq
L|t-t'|+C.
\label{eq:Us-restricted-quasi-geodesic}
\end{align}
The estimate \eqref{eq:Us-restricted-quasi-geodesic} holds for every
\(t,t'\in\mathbb R\), every \(i\in\{0,1\}\), and every
\(0<s\leq1\).  Hence \(u_s\circ\ell_i\) is an
\((L,C)\)-quasi-geodesic line in the sense of
Definition~\ref{def:quasi-geodesic}.  Its two ideal endpoints are the
images under \(\phi_s=\partial_\infty u_s\) of the two endpoints of \(L_i\).

By the complete-line statement
\eqref{eq:Morse-line-conclusion} in \Cref{lem:Morse}, there is
\begin{align}
R_0=R_0(L,C)<\infty
\nonumber
\end{align}
such that, independently of \(s\),
\begin{align}
d_{\mathrm H}\bigl(u_s(L_i),G_{i,s}\bigr)\leq R_0,
\qquad i\in\{0,1\},
\nonumber
\end{align}
where \(G_{i,s}\) is the complete geodesic whose endpoints are the
\(\phi_s\)-images of the endpoints of \(L_i\).  Since
\(\ell_i(0)=o\), it follows that
\begin{align}
\dist\bigl(u_s(o),G_{0,s}\bigr)&\leq R_0,
&
\dist\bigl(u_s(o),G_{1,s}\bigr)&\leq R_0,
\label{eq:close_two_geodesics}
\end{align}
where
$
G_{0,s}=L_0
$
and \(G_{1,s}\) has endpoints \(\phi_s(-e),\phi_s(e)\).

\textbf{Step (2)}. Suppose that \(\{u_{s_j}(o)\}_{j\in\mathbb N}\) leaves every compact subset.  

If a subsequence
of \(s_j\) converged to \(s_\infty>0\), continuity of
\(s\mapsto u_s(o)\) would contradict divergence.  

Hence, after passing to
a subsequence, \(s_j\to0\).  Compactness of the closed-ball
compactification gives
\[
u_{s_j}(o)\longrightarrow\zeta
\in\partial_\infty\Hn.
\]
By \eqref{eq:close_two_geodesics}, choose
\(z_j\in G_{0,s_j}=L_0\) and \(w_j\in G_{1,s_j}\) with
\[
d(u_{s_j}(o),z_j)\leq R_0,
\qquad
d(u_{s_j}(o),w_j)\leq R_0.
\]
By \Cref{lem:visual_facts}(\ref{item:bounded_same_endpoint}), both
\(z_j\) and \(w_j\) converge to \(\zeta\).  Since a divergent sequence
on \(L_0\) can approach only an endpoint,
\[
\zeta\in\{\mathbf{0},\infty\}.
\]
On the other hand, \eqref{eq:fs_to_A} gives
\[
\phi_{s_j}(-e)\to-Ae,
\qquad
\phi_{s_j}(e)\to Ae.
\]
The two limiting endpoints are distinct.  Applying
\Cref{lem:visual_facts}(\ref{item:moving_geodesics}) to \(w_j\) yields
\[
\zeta\in\{-Ae,Ae\}.
\]
This is impossible because
\(Ae\neq\mathbf{0}\) and all four points are viewed in
\(\R^{n-1}\cup\{\infty\}\).  Therefore the normalized basepoints remain in
a compact set.
\end{proof}

\begin{lemma}\label{lem:Us_compactness}
For every sequence \(s_j\downarrow0\), there is a subsequence, still
denoted \(s_j\), and a globally Lipschitz harmonic rough isometry
$
V:\Hn\longrightarrow\Hn$ such that
\[
u_{s_j}\longrightarrow V
\qquad\text{in }C^\infty_{\mathrm{loc}}(\mathbb{H}^n).
\]
Moreover, $V$ has the same two-sided coarse constants and the same
coarse-surjectivity constant as the family \(u_s\).
\end{lemma}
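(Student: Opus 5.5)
The plan is to extract a convergent subsequence by Arzelà--Ascoli, upgrade it with elliptic estimates for harmonic maps, and pass the coarse inequalities to the limit. All the analytic input is already available: the family \(u_s\) has a uniform global Lipschitz constant \(L_u\) and uniform coarse constants, and the basepoints \(u_s(o)\) stay in a fixed compact set \(K_0\Subset\Hn\) for \(0<s\leq1\) by \Cref{lem:basepoint_compact}.

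\textbf{Step 1 (uniform \(C^0\) bounds on compacts).} Fix \(R<\infty\). For \(x\in B(o,R)\), the Lipschitz bound gives \(d(u_s(x),u_s(o))\leq L_uR\). Hence \(u_s(B(o,R))\) lies in the fixed compact set \(K_R:=\{y:\dist(y,K_0)\leq L_uR\}\), uniformly in \(s\in(0,1]\). Arzelà--Ascoli and a diagonal argument over \(R\in\mathbb N\) then yield a subsequence converging locally uniformly to a continuous, \(L_u\)-Lipschitz map \(V\).

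\textbf{Step 2 (upgrade to \(C^\infty_{\mathrm{loc}}\)).} Work in fixed coordinate charts: the target compact \(K_{R+1}\) sits inside one upper half-space chart where the metric is smooth and uniformly equivalent to Euclidean. The \(u_s\) satisfy the harmonic map system
\[
\Delta u^\alpha+\Gamma^\alpha_{\beta\gamma}(u)\langle\nabla u^\beta,\nabla u^\gamma\rangle=0 ,
\]
with uniformly bounded \(\nabla u_s\) by the Lipschitz bound. The nonlinearity is therefore bounded in \(L^\infty\) on \(B(o,R+1)\). Interior \(W^{2,p}\) estimates give uniform \(C^{1,\alpha}\) bounds on \(B(o,R+1/2)\). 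After that the right-hand side is uniformly \(C^{\alpha}\), Schauder estimates give \(C^{2,\alpha}\), and bootstrapping gives uniform \(C^k\) bounds on \(B(o,R)\) for every \(k\). Arzelà--Ascoli again, with a diagonal subsequence, gives \(C^\infty_{\mathrm{loc}}\) convergence. Passing to the limit in the equation shows \(V\) is harmonic. The Lipschitz bound is already inherited, and also follows from \(|du_s|^2\leq n L_u^2\) pointwise.

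\textbf{Step 3 (coarse constants).} The two-sided inequality
\[
L^{-1}d(x,x')-C\leq d(u_s(x),u_s(x'))\leq Ld(x,x')+C
\]
holds for every \(s\) with the same \(L\) and \(C\). It passes to the pointwise limit, since distance is continuous.

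\textbf{Step 4 (coarse surjectivity).} I expect this to be the main obstacle, because a near-preimage \(x_s\) of a target point could a priori escape to infinity. Fix \(y\in\Hn\). For each \(j\) choose \(x_j\) with \(d(u_{s_j}(x_j),y)\leq C_0\). The lower coarse bound against the basepoint controls \(x_j\):
\[
L^{-1}d(x_j,o)-C\leq d(u_{s_j}(x_j),u_{s_j}(o))\leq C_0+d(y,K_0)+\operatorname{diam}K_0 .
\]
So \(d(x_j,o)\) is uniformly bounded, and a further subsequence has \(x_j\to x_\infty\). Local uniform convergence on a ball containing all \(x_j\), together with equicontinuity, gives \(u_{s_j}(x_j)\to V(x_\infty)\). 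Hence \(d(V(x_\infty),y)\leq C_0\), and every target point lies within \(C_0\) of \(V(\Hn)\). The subsequence may depend on \(y\), but the conclusion concerns only the single limit \(V\), so this is harmless. Thus \(V\) is a globally Lipschitz harmonic rough isometry with the same constants \(L\), \(C\), \(C_0\).
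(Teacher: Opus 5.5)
Your proposal is correct and follows essentially the same route as the paper. Both arguments run the same chain: basepoint compactness together with the uniform Lipschitz bound gives uniform $C^0$ and coordinate-gradient bounds on compact sets; interior $W^{2,p}$ estimates and Schauder bootstrapping of the harmonic map system give $C^\infty_{\mathrm{loc}}$ compactness; and the two-sided coarse bounds, together with coarse surjectivity via the bounded-preimage argument through the lower coarse inequality, pass to the limit.
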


\begin{proof}
\textbf{Step (1)}. By \Cref{lem:basepoint_compact}, after passing
to a subsequence we may assume
\begin{align}
u_{s_j}(o)\longrightarrow q_\infty\in\Hn.
\nonumber
\end{align}
Write
\begin{align}
q_\infty=(a_\infty,b_\infty)\in\mathbb R^{n-1}\times(0,\infty).
\nonumber
\end{align}

Fix \(R>0\), and put
\begin{align}
D_R&:=R+2,
&
Q_R&:=L_{\mathrm{Lip}}D_R+1,
\nonumber
\end{align}
where \(L_{\mathrm{Lip}}\) is the common global Lipschitz constant of
the maps \(u_s\).  Since \(u_{s_j}(o)\to q_\infty\), after discarding finitely many
terms and relabeling we may assume
\begin{align}
d\bigl(u_{s_j}(o),q_\infty\bigr)\leq1
\qquad\text{for every }j.
\nonumber
\end{align}

Consequently, for every \(z\in B(o,D_R)\),
\begin{align}
d\bigl(u_{s_j}(z),q_\infty\bigr)
&\leq
d\bigl(u_{s_j}(z),u_{s_j}(o)\bigr)
+d\bigl(u_{s_j}(o),q_\infty\bigr)\leq
L_{\mathrm{Lip}}d(z,o)+1
\leq Q_R.
\nonumber
\end{align}
Thus
\begin{align}
u_{s_j}\bigl(B(o,D_R)\bigr)
\subset B(q_\infty,Q_R)
\label{eq:Us-fixed-target-ball}
\end{align}
for every \(j\).

We now convert the intrinsic Lipschitz estimate into a Euclidean
coordinate estimate.  In the upper half-space model, the distance
formula is
\begin{align}
\cosh d_{\mathbb H}\bigl((a,b),(X,Y)\bigr)
=
1+\frac{|X-a|^2+(Y-b)^2}{2bY}.
\nonumber
\end{align}
It follows that the hyperbolic ball \(B((a,b),T)\) is the Euclidean
ball
\begin{align}
B((a,b),T)
=
\left\{(X,Y):
|X-a|^2+(Y-b\cosh T)^2<b^2\sinh^2T
\right\}.
\label{eq:hyperbolic-ball-Euclidean-description}
\end{align}
In particular, if \(z=(x,y)\in B(o,D_R)\), then
\begin{align}
e^{-D_R}\leq y\leq e^{D_R},
\qquad
|x|\leq\sinh D_R.
\label{eq:source-coordinate-bounds}
\end{align}
Write
\begin{align}
u_{s_j}(x,y)
=
\bigl(X_j(x,y),Y_j(x,y)\bigr).
\nonumber
\end{align}
By \eqref{eq:Us-fixed-target-ball} and
\eqref{eq:hyperbolic-ball-Euclidean-description},
\begin{align}
b_\infty e^{-Q_R}
&\leq Y_j(x,y)\leq b_\infty e^{Q_R},
&
|X_j(x,y)-a_\infty|
&\leq b_\infty\sinh Q_R.
\label{eq:target-coordinate-bounds}
\end{align}
Hence the coordinate functions themselves satisfy
\begin{align}
\sup_{z\in B(o,D_R)}
|u_{s_j}(z)|_{\mathbb R^{n}}
\leq C_{\mathrm{val}}(R):=
\left[
\bigl(|a_\infty|+b_\infty\sinh Q_R\bigr)^2
+b_\infty^2e^{2Q_R}
\right]^{1/2}.
\label{eq:Us-coordinate-C0-bound}
\end{align}

Because \(\delta_s\) is a hyperbolic isometry and
\begin{align}
u_s=\delta_{1/s}\circ u\circ\delta_s,
\nonumber
\end{align}
all the maps \(u_s\) have the same intrinsic Lipschitz constant:
\begin{align}
d\bigl(u_s(z),u_s(z')\bigr)
\leq L_{\mathrm{Lip}}d(z,z')
\qquad(z,z'\in\Hn).
\nonumber
\end{align}
Since \(u_s\) is smooth, differentiating this inequality along
geodesics gives
\begin{align}
\lVert du_s(z)\rVert_{\operatorname{op},g_{\mathbb H},g_{\mathbb H}}
\leq L_{\mathrm{Lip}}
\qquad(z\in\Hn).
\label{eq:Us-intrinsic-differential-bound}
\end{align}

Let \(v\in T_{(x,y)}\Hn\cong\mathbb R^{n}\).  From
\begin{align}
g_{\mathbb H,(x,y)}(v,v)
&=\frac{|v|_{\mathbb R^{n}}^2}{y^2},
&
g_{\mathbb H,u_{s_j}(x,y)}
\bigl(du_{s_j}(v),du_{s_j}(v)\bigr)
&=
\frac{|D_{\mathrm E}u_{s_j}(v)|_{\mathbb R^{n}}^2}
{Y_j(x,y)^2},
\nonumber
\end{align}
where \(D_{\mathrm E}\) denotes the ordinary Euclidean coordinate
differential, estimate \eqref{eq:Us-intrinsic-differential-bound} gives
\begin{align}
\frac{|D_{\mathrm E}u_{s_j}(v)|_{\mathbb R^{n}}}{Y_j(x,y)}
\leq
L_{\mathrm{Lip}}
\frac{|v|_{\mathbb R^{n}}}{y}.
\nonumber
\end{align}
Combining this with \eqref{eq:source-coordinate-bounds} and
\eqref{eq:target-coordinate-bounds}, we obtain
\begin{align}
\lVert D_{\mathrm E}u_{s_j}(x,y)\rVert_{\operatorname{op}}
&\leq
L_{\mathrm{Lip}}\frac{Y_j(x,y)}{y}\leq
L_{\mathrm{Lip}}b_\infty e^{Q_R+D_R}
=:C_{\mathrm{der}}(R)
\label{eq:Us-Euclidean-first-derivative-bound}
\end{align}
for every \((x,y)\in B(o,D_R)\) and every \(j\).
Combining \eqref{eq:Us-coordinate-C0-bound} and
\eqref{eq:Us-Euclidean-first-derivative-bound}, we obtain
\begin{align}
\sup_j\left(
\|u_{s_j}\|_{L^\infty(B(o,R+2);\mathbb R^{n})}
+
\|D_{\mathrm E}u_{s_j}\|_{L^\infty(B(o,R+2))}
\right)
\leq
C_{\mathrm{val}}(R)+C_{\mathrm{der}}(R)
<\infty.
\label{eq:Us-uniform-coordinate-W1infty}
\end{align}

This is the required uniform Euclidean
\(W^{1,\infty}\)-bound for the coordinate functions of \(u_{s_j}\).
Moreover, \eqref{eq:source-coordinate-bounds} and
\eqref{eq:target-coordinate-bounds} show quantitatively that the source
and target vertical coordinates remain uniformly bounded above and
uniformly bounded away from zero.  Hence the coefficients of the source
and target hyperbolic metrics, their inverses, and all their coordinate
derivatives are uniformly bounded on the regions under consideration.

\textbf{Step (2)}. We now make the elliptic compactness argument
explicit.  Fix \(R>0\), and set
\[
K_R:=\overline{B(q_\infty,Q_R)}.
\]
Then
\[
u_{s_j}\bigl(B(o,R+2)\bigr)\subset K_R
\qquad\text{for every }j.
\]
Since upper half-space coordinates form a global smooth chart on both the
source and the target, there are precompact coordinate domains
\(\mathcal U_R,\mathcal V_R\Subset\Hn\) such that
\[
\overline{B(o,R+2)}\subset\mathcal U_R,
\qquad
K_R\subset\mathcal V_R.
\]
Thus the finite-chart localization required below consists, in the
present hyperbolic setting, of one precompact source coordinate domain and
one precompact target coordinate domain.

Choose finitely many Euclidean balls, in the source upper half-space
coordinates,
\[
\Omega_\ell^{(0)}\Subset\Omega_\ell^{(1)}
\Subset\Omega_\ell^{(2)}\Subset B(o,R+2),
\qquad 1\leq \ell\leq N_R,
\]
such that
\[
\overline{B(o,R+1)}
\subset \bigcup_{\ell=1}^{N_R}\Omega_\ell^{(0)}.
\]
On \(\Omega_\ell^{(2)}\), write
\(u_j^\alpha:=u_{s_j}^\alpha\), and let \(D\) denote the Euclidean
coordinate derivative.  In source and target coordinates the harmonic-map
equation is
\begin{equation}\label{eq:harmonic_system_local}
 g^{ab}(x)\,\partial_{ab}u_j^\alpha
 =F^\alpha\bigl(x,u_j,Du_j\bigr),
 \qquad 1\leq\alpha\leq n,
\end{equation}
where repeated indices are summed and
\begin{align}
F^\alpha(x,z,P)
&:=
 g^{ab}(x)\Gamma^c_{ab}(x)P_c^\alpha
 -g^{ab}(x)\overline\Gamma^\alpha_{\beta\gamma}(z)
   P_a^\beta P_b^\gamma.
\label{eq:harmonic-system-nonlinearity}
\end{align}
Here \(\Gamma^c_{ab}\) and
\(\overline\Gamma^\alpha_{\beta\gamma}\) are the Christoffel symbols
of the source and target hyperbolic metrics, respectively.  Thus the
principal part is diagonal in the target index \(\alpha\), while all
coupling between the components occurs in the first-order semilinear
term \(F\).  We write \(F=(F^1,\ldots,F^{n})\).

Because \(\overline{\mathcal U_R}\) and
\(\overline{\mathcal V_R}\) are compact coordinate sets, there are
constants \(0<\lambda_R\leq\Lambda_R<\infty\) such that
\[
 \lambda_R|\xi|^2
 \leq g^{ab}(x)\xi_a\xi_b
 \leq \Lambda_R|\xi|^2
\]
for every \(x\in\mathcal U_R\) and \(\xi\in\mathbb R^{n}\).  Moreover,
all coordinate derivatives of \(g^{ab}\), \(\Gamma^c_{ab}\), and
\(\overline\Gamma^\alpha_{\beta\gamma}\) are uniformly bounded when
\(x\in\overline{\mathcal U_R}\) and
\(z\in\overline{\mathcal V_R}\).  Together with
\eqref{eq:Us-uniform-coordinate-W1infty}, this implies
\begin{equation}\label{eq:harmonic-system-RHS-Linfty}
 \sup_j
 \bigl\|F(\,\cdot,u_j,Du_j)\bigr\|_{L^\infty(\Omega_\ell^{(2)})}
 \leq C(R)
 \qquad(1\leq\ell\leq N_R).
\end{equation}
Indeed, \(z=u_j(x)\) ranges in the fixed compact set \(K_R\), while
\(P=Du_j(x)\) ranges in a fixed bounded subset of
\(\mathbb R^{n\times n}\).

Using \eqref{eq:harmonic-system-RHS-Linfty}, we may now apply
the scalar interior estimates componentwise, because
\eqref{eq:harmonic_system_local} has the same uniformly elliptic scalar
principal operator \(g^{ab}(x)\partial_{ab}\) in every component.  For
every \(1<p<\infty\), the interior \(W^{2,p}\) estimate
\cite[Theorem~9.11 and its proof, pp.~235--236]{GilbargTrudinger} gives
\begin{align}
 \|u_j\|_{W^{2,p}(\Omega_\ell^{(1)})}
 &\leq C(R,p)
 \left(
 \|u_j\|_{L^p(\Omega_\ell^{(2)})}
 +\|F(\,\cdot,u_j,Du_j)\|_{L^p(\Omega_\ell^{(2)})}
 \right)\nonumber\\
 &\leq C(R,p).
\label{eq:harmonic-system-W2p}
\end{align}
Choose \(p>n\), and put
\[
\theta:=1-\frac{n}{p}\in(0,1).
\]

By \eqref{eq:harmonic-system-W2p} and Sobolev embedding
\cite[Theorem~7.26, pp.~171--172]{GilbargTrudinger}, the maps \(u_j\)
are uniformly bounded in \(C^{1,\theta}(\Omega_\ell^{(1)})\).
Since the function \(F(x,z,P)\) in
\eqref{eq:harmonic-system-nonlinearity} is smooth on every compact set in
its variables, it follows that
\[
 \bigl\|F(\,\cdot,u_j,Du_j)\bigr\|_{C^{0,\theta}(\Omega_\ell^{(1)})}
 \leq C(R,p).
\]
The interior Schauder estimate
\cite[Theorem~6.2 and its proof, pp.~91--92]{GilbargTrudinger}, applied
componentwise to \eqref{eq:harmonic_system_local}, now gives
\[
 \|u_j\|_{C^{2,\theta}(\Omega_\ell^{(0)})}
 \leq C(R,p).
\]
More generally, smoothness of \(F\) gives the implication
\[
 u_j\text{ uniformly bounded in }C^{r+1,\theta}
 \quad\Longrightarrow\quad
 F(\,\cdot,u_j,Du_j)\text{ uniformly bounded in }C^{r,\theta}.
\]
Applying the Schauder estimate to the original equation on successively
smaller coordinate balls, and inducting on \(r\), therefore yields, for
every integer \(k\geq0\),
\begin{equation}\label{eq:all_Ck_bounds}
 \|u_{s_j}\|_{C^k(B(o,R))}\leq C(R,k).
\end{equation}
The constants are independent of \(j\), because only finitely many source
balls are used and all source coefficients, target coefficients, image
values, and first derivatives range in fixed compact sets.

The bounds \eqref{eq:all_Ck_bounds}, Arzel\`a--Ascoli, and a
diagonal argument over \(R=1,2,\ldots\) produce
a subsequence converging in \(C^\infty_{\mathrm{loc}}\) to a smooth map
\(V\).  Since each compact source ball is mapped into a compact subset of
\(\Hn\), the limit takes values in \(\Hn\).  Passing to the limit in
\eqref{eq:harmonic_system_local} on each coordinate ball shows that
\(V\) is harmonic.

\textbf{Step (3)}. The common global Lipschitz inequality passes to
the limit on every pair of points, so \(V\) is globally Lipschitz.
Likewise, if
\[
L^{-1}d(x,x')-C
\leq d(u_s(x),u_s(x'))
\leq Ld(x,x')+C,
\]
then the same inequalities hold for \(V\).

It remains only to check coarse surjectivity.  Let \(C_0\) be the common
coarse-surjectivity constant and fix \(z\in\Hn\).  Choose \(x_j\) with
\[
d(u_{s_j}(x_j),z)\leq C_0.
\]
The convergence of \(u_{s_j}(o)\) implies that
\(d(u_{s_j}(o),z)\) is bounded.  

Hence \(d(u_{s_j}(o),u_{s_j}(x_j))\) is bounded, and the common lower coarse
inequality bounds \(d(o,x_j)\).  

A subsequence of \(x_j\) converges to
some \(x\), and smooth local convergence gives
\[
d(V(x),z)\leq C_0.
\]

Since \(z\) was arbitrary, \(V\) is coarsely onto and therefore a rough
isometry.
\end{proof}

%\ReviewUnusedLabel{notation:Neighborhood-of-ray}

\begin{lemma}\label{lem:boundary_V}
Every limit \(V\) furnished by \Cref{lem:Us_compactness} has boundary map
\[
\bdry V=A
\qquad\text{on }\R^{n-1}\cup\{\infty\}.
\]
\end{lemma}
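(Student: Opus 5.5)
We show \(\bdry V(z)=Az\) for every finite \(z\in\R^{n-1}\) and \(\bdry V(\infty)=\infty\), using Morse-lemma control uniform in \(s\). By \Cref{lem:Us_compactness}, \(V\) is a continuous rough isometry with the same constants \(L,C,C_0\), so \Cref{prop:rough-boundary-extension} gives a boundary homeomorphism \(\bdry V\). The quantitative input is \Cref{lem:Morse}(ii) with the single constant \(R_0=R_0(L,C)\). As in \Cref{lem:basepoint_compact}, for every complete geodesic \(\ell\) with endpoints \(a\neq b\) and every \(s\), the quasi-geodesic \(u_s\circ\ell\) lies within Hausdorff distance \(R_0\) of the geodesic \((\phi_s(a)\,\phi_s(b))\). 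The same holds for \(V\circ\ell\) and the geodesic \((\bdry V(a)\,\bdry V(b))\).

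\textbf{Step 1: the endpoint \(\infty\) and a reduction.} Fix a finite point \(z\neq\mathbf 0\). Let \(\ell_z\) be the vertical geodesic with endpoints \(z\) and \(\infty\), parametrized by \(\ell_z(t)=(z,e^{-t})\), and let \(\ell_0=L_0\). For fixed \(t\), the points \(u_{s_j}(\ell_z(t))\) converge to \(V(\ell_z(t))\) by \(C^\infty_{\mathrm{loc}}\) convergence. Each \(u_{s_j}(\ell_z(t))\) lies within \(R_0\) of the geodesic \(G_j:=(\phi_{s_j}(z)\,\infty)\). By \eqref{eq:fs_to_A}, \(\phi_{s_j}(z)\to Az\neq\infty\), so \(G_j\to G:=(Az\,\infty)\) locally uniformly in the Hausdorff sense on compacta. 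Passing to the limit, \(V(\ell_z(t))\) lies within \(R_0\) of \(G\) for every \(t\in\R\). Hence the whole quasi-geodesic \(V\circ\ell_z\) stays within \(R_0\) of \(G\), and its two ideal endpoints \(\bdry V(z)\) and \(\bdry V(\infty)\) lie in \(\{Az,\infty\}\) by \Cref{lem:visual_facts}\ref{item:moving_geodesics} (or \ref{item:bounded_same_endpoint}). Doing the same with \(\ell_0\) gives \(\{\bdry V(\mathbf 0),\bdry V(\infty)\}=\{\mathbf 0,\infty\}\). Taking a second point \(z'\) with \(Az'\neq Az\) and both nonzero, we get \(\bdry V(\infty)\in\{Az,\infty\}\cap\{Az',\infty\}\cap\{\mathbf 0,\infty\}=\{\infty\}\). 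Then injectivity of \(\bdry V\) forces \(\bdry V(z)=Az\) for every \(z\), including \(z=\mathbf 0\).

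\textbf{Step 2: the main obstacle.} The delicate point is the passage to the limit in Step 1. Each fixed point \(\ell_z(t)\) converges, but this alone does not control the endpoint of \(V\circ\ell_z\). The fix is that the tube estimate holds \emph{for every} \(t\) before taking limits, and membership in the closed \(R_0\)-neighborhood of a geodesic is stable under convergence of both the points and the geodesics. To make this rigorous, write the condition as \(\dist(u_{s_j}(\ell_z(t)),G_j)\le R_0\). Since \(u_{s_j}(\ell_z(t))\) stays in a compact set for fixed \(t\), only compact pieces of \(G_j\) matter. On such pieces the geodesics \(G_j\) converge smoothly to \(G\), by continuous dependence of the geodesic with ideal endpoints \((\phi_{s_j}(z),\infty)\) on its endpoints (the vertical line above \(\phi_{s_j}(z)\)). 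This yields \(\dist(V(\ell_z(t)),G)\le R_0\) for all \(t\), so the tube estimate for \(V\circ\ell_z\) holds without invoking \Cref{lem:Morse} for \(V\) directly. Finally, the endpoint of \(V(\ell_z(t))\) as \(t\to\pm\infty\) exists by \Cref{prop:rough-boundary-extension} and equals \(\bdry V\) at the endpoints of \(\ell_z\). Because it stays in a bounded neighborhood of \(G\), \Cref{lem:visual_facts}\ref{item:bounded_same_endpoint} places it in \(\{Az,\infty\}\), which completes the argument.
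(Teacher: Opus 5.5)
Your argument is correct, but it sets up the geodesic comparison differently from the paper.

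The paper takes the unit-speed ray $\alpha$ from $o$ to each $\zeta\in\R^{n-1}\cup\{\infty\}$ and applies the ray form of \Cref{lem:Morse}. It compares $u_{s_j}\circ\alpha$ with the rays $\beta_j$ from the moving basepoints $u_{s_j}(o)$ to $\phi_{s_j}(\zeta)$. It then uses \Cref{lem:visual_facts}\ref{item:moving_rays} and the parameter bounds \eqref{eq:tj_bounds} to get $V(\alpha(T))\to A\zeta$. Because a ray has only one ideal endpoint, each boundary value, including the one at $\infty$, is read off directly. Nothing about $\bdry V$ is needed beyond its existence.

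You instead use complete vertical lines. Their comparison geodesics $(\phi_{s_j}(z)\,\infty)$ depend only on the endpoints and converge explicitly in upper half-space. This avoids moving basepoints, \Cref{lem:visual_facts}\ref{item:moving_rays}, and the bookkeeping of $t_j(T)$: the closed bound $\dist(\cdot,G_j)\le R_0$ passes to the limit pointwise. The cost is that a line determines only the unordered endpoint pair. You therefore need injectivity of $\bdry V$ (from \Cref{prop:rough-boundary-extension} applied to $V$) and the intersection over two lines with $Az\neq Az'$ to fix $\bdry V(\infty)=\infty$, before concluding $\bdry V(z)=Az$. Both steps are valid as you wrote them.
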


\begin{proof}
\textbf{Step (1)}. Fix \(\zeta\in\R^{n-1}\cup\{\infty\}\), and let
\(\alpha:[0,\infty)\to\Hn\) be the unit-speed ray from \(o\) to
\(\zeta\).  The curve \(u_{s_j}\circ\alpha\) is a quasi-geodesic ray with constants independent of \(j\), starts at \(u_{s_j}(o)\), and has ideal endpoint \(\phi_{s_j}(\zeta)\).  

Let \(\beta_j\) be the unit-speed
geodesic ray from \(u_{s_j}(o)\) to that endpoint \(\phi_{s_j}(\zeta)\).

By the ray statement in \Cref{lem:Morse}, there is
\(R_1<\infty\), independent of \(j\), such that
\begin{equation}\label{eq:morse_rays}
d\bigl(u_{s_j}(\alpha(T)),\beta_j([0,\infty))\bigr)\leq R_1
\qquad(T\geq0).
\end{equation}

For finite \(\zeta\), \eqref{eq:fs_to_A} gives
\(\phi_{s_j}(\zeta)\to A\zeta\); for \(\zeta=\infty\), both sides are
\(\infty\).  Also \(u_{s_j}(o)\to V(o)\).  Hence
\Cref{lem:visual_facts}(\ref{item:moving_rays}) implies that
\(\beta_j\to\beta\) in \(C^\infty\) on compact parameter intervals,
where \(\beta\) is the ray from \(V(o)\) to \(A\zeta\).

Fix \(T\geq0\).  By \eqref{eq:morse_rays}, choose \(t_j(T)\geq0\) with
\[
d(u_{s_j}(\alpha(T)),\beta_j(t_j(T)))\leq R_1.
\]
Let \(L,C\) be common rough-isometry constants.  Since
\(d(\alpha(0),\alpha(T))=T\),
\begin{equation}\label{eq:tj_bounds}
L^{-1}T-C-R_1
\leq t_j(T)
\leq LT+C+R_1.
\end{equation}
Indeed, \(t_j(T)=d(u_{s_j}(o),\beta_j(t_j(T)))\), and the two inequalities
follow from the triangle inequality and the two coarse bounds for
\(u_{s_j}\).

\textbf{Step (2)}. By \eqref{eq:tj_bounds}, for each fixed
\(T\), the parameters \(t_j(T)\) lie in a compact interval.  After passing to a subsequence that may depend on \(T\), we may assume that \(t_j(T)\to t(T)\).

Local convergence of
\(u_{s_j}\), compact-parameter convergence of \(\beta_j\), and the
preceding distance estimate show that, for every \(T\), there exists such a
number with
\[
d(V(\alpha(T)),\beta(t(T)))\leq R_1,
\qquad
L^{-1}T-C-R_1\leq t(T)\leq LT+C+R_1.
\]
The conclusion for a given \(T\) is independent of the auxiliary
subsequence.  Select one admissible \(t(T)\) for each \(T\).  Then
\[
t(T)\geq L^{-1}T-C-R_1\longrightarrow\infty,
\]
so \(\beta(t(T))\to A\zeta\), and
\Cref{lem:visual_facts}(\ref{item:bounded_same_endpoint}) gives
\[
V(\alpha(T))\longrightarrow A\zeta
\qquad(T\to\infty).
\]
Because \(V\) is a rough isometry, it has a boundary extension; the last
identity identifies that extension at every \(\zeta\), proving
\(\bdry V=A\).
\end{proof}

\section{Identification of the moving-ball asymptotic}
%\ReviewUnusedLabel{sec:moving-ball}

The preceding section established that every sequence
\(s_j\downarrow0\) has a subsequence for which \(u_{s_j}\) converges
smoothly on compact subsets to a harmonic rough isometry
\(V:\Hn\to\Hn\), and that every such limit satisfies
\begin{align}
\bdry V=A.
\nonumber
\end{align}
It remains to identify \(V\).

We first construct the explicit harmonic model \(H_A\).  Li--Wang
uniqueness then identifies every subsequential limit with \(H_A\), which
upgrades subsequential compactness to convergence of the entire
renormalized family.  This produces the moving-ball asymptotic.

Let \(\sigma_1(A),\dots,\sigma_{n-1}(A)\) denote the Euclidean singular
values of \(A\), in the sense fixed in the Introduction, and let \(A^*\)
denote the Euclidean adjoint of \(A\).

Then all \(\sigma_i(A)\) are positive, and
\begin{align}
\sum_{i=1}^{n-1}\sigma_i(A)^2=\operatorname{tr}(A^*A)=\|A\|_{\HS}^2. \nonumber
\end{align}

We set
\begin{align}
&c_A:=\left(\frac{\tr(A^*A)}{n-1}\right)^{1/2}=\left(\frac{\|A\|_{\HS}^2}{n-1}\right)^{1/2}\label{eq:cA_def}\\
&H_A(x,y):=(Ax,c_Ay): \mathbb{H}^{n}\rightarrow\mathbb{H}^{n}.\label{eq:HA_def}
\end{align}
%\SuggestedDelete{\textup{\scriptsize Delete the stray standalone \texttt{\textbackslash}.}}

\begin{lemma}\label{lem:HA_harmonic}
The map \(H_A:\Hn\to\Hn\) is a harmonic bi-Lipschitz diffeomorphism with
\(\bdry H_A=A\).  At every \(p\in\mathbb H^{n}\), the singular values of $d(H_A)_p:
\bigl(T_p\mathbb H^{n},g_{\mathbb H}\bigr)
\longrightarrow
\bigl(T_{H_A(p)}\mathbb H^{n},g_{\mathbb H}\bigr)$ are
\begin{equation}\label{eq:HA_singular_values}
\frac{\sigma_1(A)}{c_A},\dots,
\frac{\sigma_{n-1}(A)}{c_A},1,
\end{equation}
and therefore $|dH_A|^2\equiv n.$
\end{lemma}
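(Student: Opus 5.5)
The proof is a direct coordinate computation in the upper half-space model; the only point needing care is harmonicity, which I plan to verify through the tension field.

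\emph{Diffeomorphism, bi-Lipschitz bound, boundary map.} Since \(A\in\operatorname{GL}(n-1,\R)\) and \(c_A>0\), \(H_A\) is a linear automorphism of \(\R^{n-1}\times(0,\infty)\), hence a diffeomorphism with inverse \(H_{A}^{-1}(X,Y)=(A^{-1}X,c_A^{-1}Y)\). Its extension to the closure is \((x,0)\mapsto(Ax,0)\) with \(\infty\mapsto\infty\). Points converging to \((x_0,0)\) or to \(\infty\) are mapped to points converging to \((Ax_0,0)\) or to \(\infty\), respectively. Therefore \(\bdry H_A=A\), consistent with the closed-ball topology via \Cref{prop:rough-boundary-extension} once bi-Lipschitz is known.

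\emph{Singular values.} At \(p=(x,y)\) the coordinate differential is \(\operatorname{diag}(A,c_A)\). For \(v=(\dot x,\dot y)\),
\[
|dH_A(v)|^2_{g_{\mathbb H}}=\frac{|A\dot x|^2+c_A^2\dot y^2}{c_A^2y^2},
\qquad
|v|^2_{g_{\mathbb H}}=\frac{|\dot x|^2+\dot y^2}{y^2}.
\]
Take orthonormal bases \(\{y\,\epsilon_i\}\) at \(p\), where \(\epsilon_i\) are right singular vectors of \(A\), together with \(y\,\partial_y\). The pullback metric is then diagonal with entries \(\sigma_i(A)^2/c_A^2\) and \(1\), which gives \eqref{eq:HA_singular_values}. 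Summing the squares gives
\[
|dH_A|^2=\frac{\tr(A^*A)}{c_A^2}+1=(n-1)+1=n
\]
by \eqref{eq:cA_def}. The singular values are bounded above and below independently of \(p\), so \(H_A\) and its inverse have bounded differential and are globally Lipschitz; in particular \(H_A\) is bi-Lipschitz.

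\emph{Harmonicity.} This is the only step needing care, and here is where the specific choice of \(c_A\) enters. I will compute the tension field
\[
\tau^\gamma=\Delta_g u^\gamma+g^{ab}\,\overline\Gamma^\gamma_{\alpha\beta}(u)\,\partial_a u^\alpha\partial_b u^\beta
\]
for the source metric \(g=y^{-2}\delta\) in dimension \(n\). The two ingredients are as follows.
\begin{itemize}
\item Source Laplacian: \(\Delta_g f=y^2\Delta_{\mathrm E}f-(n-2)y\,\partial_yf\). Since the components of \(H_A\) are linear, \(\Delta_g H_A^\gamma=0\) for \(\gamma\le n-1\), and \(\Delta_g(c_Ay)=-(n-2)c_Ay\).
\item Target Christoffel symbols for \(Y^{-2}\delta\): these are \(\overline\Gamma^\gamma_{\alpha\beta}=-Y^{-1}(\delta_{\alpha}^{\gamma}\delta_{\beta n}+\delta_{\beta}^{\gamma}\delta_{\alpha n}-\delta_{\alpha\beta}\delta^{\gamma n})\).
\end{itemize}
Horizontal components (\(\gamma\le n-1\)): the term is \(-2Y^{-1}y^2\sum_a\partial_aX^\gamma\partial_aY\). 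This vanishes because \(X\) depends only on \(x\) and \(Y\) only on \(y\).

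Vertical component: with \(Y=c_Ay\),
\[
\tau^n=-(n-2)c_Ay+\frac{y^2}{c_Ay}\Bigl(\sum_{a}|\partial_aX|^2-c_A^2\Bigr).
\]
Here \(\sum_a|\partial_aX|^2=\tr(A^*A)\), summing over the \(n-1\) horizontal directions, and the \(\partial_y\) direction contributes \(-c_A^2\). Hence
\[
\tau^n=\frac{y}{c_A}\Bigl(\tr(A^*A)-(n-1)c_A^2\Bigr)=0,
\]
again by \eqref{eq:cA_def}. I expect the main obstacle to be exactly this bookkeeping of signs and dimension factors; to guard against errors I would cross-check against the case \(A=\Id\), where \(H_A\) is the identity.
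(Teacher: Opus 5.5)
Your proposal is correct and follows essentially the same route as the paper: you compute the tension field in upper half-space coordinates using the same Laplacian \(y^2\Delta_{\mathrm E}-(n-2)y\partial_y\) and the same target Christoffel symbols, with the choice \((n-1)c_A^2=\tr(A^*A)\) cancelling the vertical component. You then read off the singular values \(\sigma_i(A)/c_A\) and \(1\) in the orthonormal frames \(y\partial\), and deduce \(|dH_A|^2=n\), the bi-Lipschitz bound, and \(\bdry H_A=A\) as the paper does. The only cosmetic difference is that you diagonalize the pullback using right singular vectors of \(A\), whereas the paper reads the singular values off the block matrix \(\operatorname{diag}(A/c_A,1)\).
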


\begin{proof}
\textbf{Step (1)}. Write target coordinates as
\((Y^1,\dots,Y^{n-1},Y)\), where \(Y=c_Ay\).  For the upper half-space
metric, the nonzero target Christoffel symbols are
\[
\overline\Gamma^a_{bY}
=
\overline\Gamma^a_{Yb}
=-\frac{\delta^a_b}{Y},
\qquad
\overline\Gamma^Y_{ab}=\frac{\delta_{ab}}Y,
\qquad
\overline\Gamma^Y_{YY}=-\frac1Y.
\]
The scalar hyperbolic Laplacian in dimension \(n\) is
\[
\Delta_{\mathbb H}
=
y^2\left(\sum_{a=1}^{n-1}\partial_{x_ax_a}+\partial_{yy}\right)
-(n-2)y\partial_y.
\]

For a horizontal component \(H_A^a=(Ax)^a\),
\(\Delta_{\mathbb H}H_A^a=0\).  

Every possible connection term is mixed
horizontal--vertical.  It vanishes because the inverse domain metric is
diagonal, \(\partial_yH_A^a=0\), and \(\partial_{x_b}Y=0\).  

Recall that if \(F:(M,g)\to(N,h)\) is smooth, then its tension field is
\begin{align}
\tension(F):=\operatorname{tr}_g(\nabla dF).
\nonumber
\end{align}
In source coordinates \(z^i\) and target coordinates \(Y^\alpha\), its
components are
\begin{align}
\tension^\alpha(F)
&=
\Delta_gF^\alpha
+
g^{ij}\overline\Gamma^\alpha_{\beta\gamma}(F)
\partial_iF^\beta\partial_jF^\gamma,
\nonumber
\end{align}
where \(\overline\Gamma^\alpha_{\beta\gamma}\) are the Christoffel symbols
of the target metric.  The map \(F\) is harmonic if and only if
\(\tension^\alpha(F)=0\) for every \(\alpha\).

Since the only target Christoffel symbols
with upper horizontal index \(a\) are
\(\overline\Gamma^a_{bY}=\overline\Gamma^a_{Yb}=-\delta_b^a/Y\), we obtain
\begin{align}
\tension^a(H_A)
&=
\Delta_{\mathbb H}H_A^a
-\frac{2}{Y}g^{ij}\partial_iH_A^a\partial_jY.
\nonumber
\end{align}
The second term vanishes because \(H_A^a\) depends only on the horizontal
variables, \(Y=c_Ay\) depends only on \(y\), and the inverse source metric
is diagonal.  

Therefore
\begin{align}
\tension^a(H_A)=0
\qquad(1\leq a\leq n-1).
\nonumber
\end{align}

For the vertical component,
\[
\Delta_{\mathbb H}(c_Ay)=-(n-2)c_Ay.
\]
The horizontal connection contribution is
\[
\begin{aligned}
g^{ij}\overline\Gamma^Y_{bc}\partial_iH_A^b\partial_jH_A^c&=y^2\sum_{i=1}^{n-1}\frac1{c_Ay}
\sum_{b=1}^{n-1}(A_i^b)^2=\frac{y}{c_A}\|A\|_{\HS}^2,
\end{aligned}
\]
and the vertical connection contribution is
\[
g^{yy}\overline\Gamma^Y_{YY}(\partial_yY)^2
=
y^2\left(-\frac1{c_Ay}\right)c_A^2
=-c_Ay.
\]
Thus
\[
\tension^Y(H_A)
=
y\left(-(n-1)c_A+\frac{\|A\|_{\HS}^2}{c_A}\right)=0
\]
by \((n-1)c_A^2=\|A\|_{\HS}^2\).  Hence \(H_A\) is harmonic.

\textbf{Step (2)}. At \((x,y)\), use the source hyperbolic orthonormal frame
\[
y\partial_{x_1},\dots,y\partial_{x_{n-1}},y\partial_y
\]
and the target frame
\[
c_Ay\partial_{Y^1},\dots,c_Ay\partial_{Y^{n-1}},c_Ay\partial_Y.
\]
In these frames,
\begin{equation}\label{eq:HA_matrix}
dH_A=
\begin{pmatrix}
A/c_A&0\\
0&1
\end{pmatrix}.
\end{equation}
The matrix formula \eqref{eq:HA_matrix} proves
\eqref{eq:HA_singular_values}.

Since \(A\) is invertible,
all singular values are bounded above and below by positive constants, so
\(H_A\) is a global hyperbolic bi-Lipschitz diffeomorphism.  Its extension
to \(\R^{n-1}\cup\{\infty\}\) is \(x\mapsto Ax\) and
\(\infty\mapsto\infty\), proving \(\bdry H_A=A\).
Finally, by the Hilbert--Schmidt convention
\eqref{eq:HS_norm_def},
\[
|dH_A|^2
=
\sum_{i=1}^{n-1}\frac{\sigma_i(A)^2}{c_A^2}+1
=
\frac{\tr(A^*A)}{\tr(A^*A)/(n-1)}+1
=n.
\]
\end{proof}

Li--Wang formulate the notion of a rough isometry using a coarse inverse
rather than coarse surjectivity.  Under the two-sided bounds
\eqref{eq:general-rough-isometry-bounds}, their formulation is equivalent to
Definition~\ref{def:rough-isometry}.  Indeed, suppose that
\(F:X\to Y\) is
\(C_0\)-coarsely surjective.  For each \(y\in Y\), choose \(G(y)\in X\)
such that
\begin{align}
d_Y(F(G(y)),y)\leq C_0.
\nonumber
\end{align}
Taking \(y=F(x)\) and applying the lower bound in
\eqref{eq:general-rough-isometry-bounds} to \(G(F(x))\) and \(x\), we obtain
\begin{align}
L^{-1}d_X(G(F(x)),x)-C
&\leq d_Y(F(G(F(x))),F(x))
\leq C_0.
\nonumber
\end{align}
Consequently,
\begin{align}
d_X(G(F(x)),x)\leq L(C+C_0)
\qquad\text{for every }x\in X.
\nonumber
\end{align}
Thus \(G\) is a coarse inverse of \(F\).  Conversely, the existence of a
coarse inverse immediately implies coarse surjectivity.  Hence Li--Wang's
theorem may be stated using the terminology of
Definition~\ref{def:rough-isometry}.

The following is Li--Wang \cite[Theorem~2.3, pp.~433--436]{LiWang}.
\begin{theorem}[Li--Wang's harmonic rough-isometry uniqueness theorem]
\label{thm:LiWang_uniqueness}
Let \(X\) be a Cartan--Hadamard manifold and let \(Y\) be a Hadamard space
whose curvature satisfies
\begin{align}
K_Y\leq-a^2<0
\nonumber
\end{align}
in the metric triangle-comparison sense.  Suppose that the isometry groups
of \(X\) and \(Y\) act cocompactly.  If
\begin{align}
F,G:X\longrightarrow Y
\nonumber
\end{align}
are harmonic rough isometries inducing the same boundary map at infinity,
then \(F=G\).
\end{theorem}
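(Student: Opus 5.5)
The plan is the classical Li--Wang maximum-principle argument, carried out in four steps: a sub-harmonicity computation, a uniform bound, a compactness recentring, and a rigidity contradiction. Set
\[
\psi(x):=d_Y\bigl(F(x),G(x)\bigr).
\]

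\textbf{Step 1: $\psi$ is subharmonic.} The pair $(F,G):X\to Y\times Y$ is harmonic into the product. The distance function $d_Y$ is convex on $Y\times Y$ because $Y$ is CAT$(-a^2)$. Hence $\psi$ is subharmonic in the viscosity (or distributional) sense. Where $\psi>0$, the curvature bound $K_Y\le-a^2$ gives a quantitative improvement. Writing the differentials of $F$ and $G$ along the geodesic joining $F(x)$ to $G(x)$, the Hessian of $d_Y$ on $Y\times Y$ strictly controls the components normal to that geodesic. This yields
\[
\Delta\psi\;\ge\; c(a)\,\tanh\bigl(a\psi/2\bigr)\,\bigl|(dF)^{\perp}\bigr|^2 ,
\]
where $(dF)^\perp$ denotes the part of $dF$ normal to the connecting geodesic, suitably symmetrized between $F$ and $G$. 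I would derive this from the second variation of distance in a CAT$(-a^2)$ space. In the smooth setting, with $X,Y$ hyperbolic as needed here, it is a direct Jacobi-field computation.

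\textbf{Step 2: $\psi$ is bounded.} Fix $x\in X$ and a geodesic line through $x$. Its images under $F$ and $G$ are quasi-geodesic lines with the same rough constants. By the hypothesis $\bdry F=\bdry G$, they have the same ideal endpoints. By the Morse lemma (\Cref{lem:Morse}), both images lie within $M(L,C)$ of the same complete geodesic. Doing this for two transverse lines through $x$ traps $F(x)$ and $G(x)$ within a uniformly bounded distance of each other, exactly as in \Cref{lem:basepoint_compact}. Therefore $\sup_X\psi=:m<\infty$.

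\textbf{Step 3: recentre so that the supremum is attained.} Assume $m>0$ and pick $x_j$ with $\psi(x_j)\to m$. Cocompactness of $\operatorname{Isom}(X)$ and $\operatorname{Isom}(Y)$ lets us choose isometries $g_j$ of $X$ and $h_j$ of $Y$ such that $g_j^{-1}x_j$ stays in a fixed compact set and $h_j F(x_j)$ stays in a fixed compact set. By Step 2, $h_jG(x_j)$ then stays in a compact set as well. The maps $h_jFg_j$ and $h_jGg_j$ are harmonic rough isometries with uniform constants. Harmonic rough isometries into such targets have uniformly bounded energy density, by the Cheng/Schoen--Yau gradient estimate, so the elliptic compactness argument of \Cref{lem:Us_compactness} produces $C^\infty_{\mathrm{loc}}$ limits $F_\infty,G_\infty$. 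Their distance function $\psi_\infty$ attains its maximum $m$ at an interior point. By the strong maximum principle, $\psi_\infty\equiv m$.

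\textbf{Step 4: contradiction, the main obstacle.} Since $\psi_\infty\equiv m>0$ is constant, the inequality in Step 1 forces $(dF_\infty)^\perp\equiv0$. So at every point, $dF_\infty$ takes values in the one-dimensional direction of the geodesic from $F_\infty(x)$ to $G_\infty(x)$, and $dF_\infty$ has rank at most one everywhere. A rank-$\le1$ map has image contained in a single curve, and I would argue that this curve must be a geodesic. Such an image cannot be coarsely surjective onto $Y$ when $\dim Y\ge2$, which contradicts $F_\infty$ being a rough isometry. Hence $m=0$ and $F=G$.

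The delicate points are in Steps 1 and 4. Step 1 needs the precise form of the strict-convexity inequality, including the normal-component term, at points where $\psi>0$. Step 4 needs the passage from $(dF_\infty)^\perp=0$ to the geometric degeneration, together with checking that coarse surjectivity survives the limit. I would treat the latter exactly as in Step (3) of \Cref{lem:Us_compactness}.
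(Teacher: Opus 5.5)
The paper gives no proof of this theorem. It quotes Li--Wang's Theorem~2.3 and only checks that its own setting fits: coarse surjectivity versus a coarse inverse, classical versus variational harmonicity, and transitivity of $\operatorname{Isom}(\mathbb H^n)$. Your four steps are therefore a genuinely different, self-contained route. The skeleton is the right one: subharmonic distance function, Morse-lemma bound, cocompact recentring with the strong maximum principle, and rigidity of the equality case under $K_Y\le-a^2<0$. It also makes visible exactly where cocompactness and strict negative curvature are used. The citation buys full generality: singular CAT$(-a^2)$ targets and variationally harmonic maps, where Steps 1, 3 and 4 would need Korevaar--Schoen theory. Your route uses normal components, elliptic $C^\infty$ compactness and rank, so it covers only smooth $Y$. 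That is all the paper needs, since it applies the theorem with $X=Y=\mathbb H^n$. There Step~3 does not even need Cheng's estimate, because $V$ and $H_A$ are already globally Lipschitz and recentring by isometries preserves the constant. Your Step~1 inequality is correct. Kato's inequality and the one-dimensional comparison give $\operatorname{Hess}d_Y\bigl((V,W),(V,W)\bigr)\ge a\tanh(a\ell/2)\bigl(|V^\perp|^2+|W^\perp|^2\bigr)$, using $\coth x-\operatorname{csch}x=\tanh(x/2)$.

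Two steps need repair even for smooth targets.

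In Step~2 you need a bound on $\psi$ that is uniform in $x$, and \Cref{lem:basepoint_compact} cannot supply it. That lemma is a non-quantitative contradiction argument whose transversality comes from $\phi_s\to A$ with $A$ invertible. Use instead that a rough isometry distorts Gromov products at most affinely. The four endpoints of two orthogonal lines through $x$ have bounded Gromov products at $x$. Hence their images are uniformly separated in the visual metric based at $F(x)$. The intersection of the $M$-neighbourhoods of the two target geodesics then has diameter bounded in terms of $L$, $C$ and the hyperbolicity constant.

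In Step~4, ``rank $\le1$ implies image in one curve'' is false for general smooth maps. Even a one-dimensional image can be coarsely dense, so rank alone gives no contradiction. Use the equality case fully. For each direction $v$, the variation of the segment $\sigma_x$ from $F_\infty(x)$ to $G_\infty(x)$ is a Jacobi field whose normal part vanishes at both ends. Since there are no conjugate points, the normal part vanishes identically. So the covariant derivative of $x\mapsto\sigma_x'(0)$ vanishes, and the complete geodesic containing $\sigma_x$ is locally constant in $x$. By connectedness, $F_\infty(X)$ lies in a single geodesic line; this is also Sampson's theorem on harmonic maps of rank at most one.

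Only at this point does coarse surjectivity fail, and only because $Y$ is not within bounded distance of a line. Your proviso $\dim Y\ge2$ is therefore genuinely needed. For $X=Y=\mathbb R$, the maps $x\mapsto x$ and $x\mapsto x+1$ satisfy every hypothesis as literally stated, yet they are distinct.
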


For the application below, one further compatibility point remains.
Because the target is the Hadamard manifold \(\Hn\), a smooth classically
harmonic map is locally energy minimizing; hence the smooth maps used below
are harmonic in the variational sense used by Li--Wang.

\begin{proposition}\label{prop:tangent}
In the normalized setting of \Cref{sec:tangent}, let
\(r(t)=(\mathbf{0},e^{-t})\) and
\(A=D\phi_{\mathbf{0}}
\in\operatorname{GL}(n-1,\R)\).  Then, 
\begin{equation}\label{eq:sharp_tube_limit}
\lim_{t\to\infty}\sup_{z\in B(r(t),R)}\bigl||du(z)|^2-n\bigr|=0, \quad \quad \forall R\geq 0.
\end{equation}
\end{proposition}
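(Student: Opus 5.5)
The plan is to convert the moving-ball statement \eqref{eq:sharp_tube_limit} into a statement about the renormalized family \(u_s\) on a fixed ball around \(o=(\mathbf{0},1)\), and then to identify every subsequential limit of that family with the explicit model \(H_A\) of \Cref{lem:HA_harmonic}.

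\textbf{Reduction to the renormalized family.} By \eqref{eq:vertical_ray}, \(r(t)=(\mathbf{0},e^{-t})=\delta_{s}(o)\) with \(s=e^{-t}\). Since \(\delta_s\) is a hyperbolic isometry, \(B(r(t),R)=\delta_s(B(o,R))\). By \eqref{eq:Us_def}, \(u_s=\delta_{1/s}\circ u\circ\delta_s\), and isometric pre- and post-composition preserve the Hilbert--Schmidt norm \eqref{eq:HS_norm_def}. Hence
\[
|du|^2(\delta_s z)=|du_s|^2(z)\qquad(z\in\Hn).
\]
Consequently \eqref{eq:sharp_tube_limit} is equivalent to
\[
\lim_{s\downarrow0}\ \sup_{z\in B(o,R)}\bigl||du_s(z)|^2-n\bigr|=0
\qquad\text{for every fixed }R\geq0 .
\]

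\textbf{Argument by contradiction.} Suppose this fails for some \(R\). Then there are \(\varepsilon>0\), a sequence \(s_j\downarrow0\), and points \(z_j\in\overline{B(o,R)}\) with \(\bigl||du_{s_j}(z_j)|^2-n\bigr|\geq\varepsilon\).

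\Cref{lem:Us_compactness} supplies a further subsequence with \(u_{s_j}\to V\) in \(C^\infty_{\mathrm{loc}}\). The limit \(V\) is a harmonic, globally Lipschitz rough isometry, and \Cref{lem:boundary_V} gives \(\bdry V=A\).

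By \Cref{lem:HA_harmonic}, \(H_A\) is a harmonic bi-Lipschitz diffeomorphism, hence a harmonic rough isometry, with \(\bdry H_A=A\). Both maps are smooth and classically harmonic into the Hadamard manifold \(\Hn\), so they are harmonic in Li--Wang's variational sense, as noted before the proposition. We now apply \Cref{thm:LiWang_uniqueness} with \(X=Y=\Hn\). The curvature condition holds with \(a=1\), and \(\operatorname{Isom}(\Hn)\) acts transitively, hence cocompactly. The theorem therefore gives \(V=H_A\).

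Since the convergence \(u_{s_j}\to H_A\) is \(C^1\) on the compact set \(\overline{B(o,R)}\), we get \(|du_{s_j}|^2\to|dH_A|^2\) uniformly there. Passing to a further subsequence, \(z_j\to z_\infty\in\overline{B(o,R)}\), and so \(|du_{s_j}(z_j)|^2\to|dH_A(z_\infty)|^2=n\). This contradicts \(\bigl||du_{s_j}(z_j)|^2-n\bigr|\geq\varepsilon\).

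\textbf{Main obstacle.} The delicate point is the legitimacy of the uniqueness step. One must check three things. First, Li--Wang's notion of rough isometry matches Definition~\ref{def:rough-isometry}; this was verified via the coarse-inverse equivalence. Second, "same boundary map" must be understood in the same compactification for both \(V\) and \(H_A\). This holds because both boundary maps were computed in the single normalized upper-half-space chart \(\R^{n-1}\cup\{\infty\}\) and agree everywhere, including at \(\infty\). Third, the limit \(V\) must genuinely be a rough isometry and not merely Lipschitz; this is exactly the coarse-surjectivity part of \Cref{lem:Us_compactness}.

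Once uniqueness is in hand, the remaining steps are just the subsequence-of-every-subsequence principle and the isometry invariance of \(|du|^2\) described above.
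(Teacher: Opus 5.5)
Your proposal is correct and follows the paper's proof essentially step for step: the isometric rescaling identity \(|du_s(w)|^2=|du(\delta_s w)|^2\) with \(\delta_s(B(o,R))=B(r(t),R)\) for \(s=e^{-t}\), subsequential \(C^\infty_{\mathrm{loc}}\) compactness from \Cref{lem:Us_compactness}, the identification \(\partial_\infty V=A=\partial_\infty H_A\), and Li--Wang uniqueness to get \(V=H_A\). The only difference is cosmetic: you phrase the subsequence principle as a contradiction argument with points \(z_j\), whereas the paper first derives full convergence \(u_s\to H_A\) in \(C^\infty_{\mathrm{loc}}\) and then reads off the uniform energy limit on \(B(o,R)\).
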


\begin{proof}
\textbf{Step (1)}. Recall the invertible map
\(A=D\phi_{\mathbf{0}}\) and the definitions of \(c_A\) and \(H_A\)
in \eqref{eq:cA_def}--\eqref{eq:HA_def}.

Let \(s_j\downarrow0\).  By \Cref{lem:Us_compactness}, a subsequence $u_{s_j}$ converges in \(C^\infty_{\mathrm{loc}}\) to a harmonic rough isometry
\(V\).  By \Cref{lem:boundary_V,lem:HA_harmonic},
\[
\bdry V=A=\bdry H_A.
\]

We now verify the hypotheses of \Cref{thm:LiWang_uniqueness}.  Real
hyperbolic space is a Cartan--Hadamard manifold of curvature \(-1\), and
its full isometry group acts transitively and hence cocompactly.  Both
\(V\) and \(H_A\) are harmonic rough isometries, and
\(\bdry V=A=\bdry H_A\).  Thus \Cref{thm:LiWang_uniqueness} applies and
gives
\begin{equation}\label{eq:V_equals_HA}
V=H_A.
\end{equation}

By \eqref{eq:V_equals_HA}, every sequence \(s_j\downarrow0\)
has a subsequence converging to the same limit \(H_A\).

Hence the boundary-tangent renormalized maps \(u_s\) defined in
\eqref{eq:Us_def} satisfy
\begin{equation}\label{eq:tangent_full_statement}
u_s\longrightarrow H_A
\qquad\text{in }C^\infty_{\mathrm{loc}}(\Hn,\Hn)
\quad(s\downarrow0).
\end{equation}

\textbf{Step (2)}. Fix \(R<\infty\).  The \(C^1\)-part of
\eqref{eq:tangent_full_statement} and the identity \(|dH_A|^2\equiv n\) from Lemma~\ref{lem:HA_harmonic} give
\begin{equation}\label{eq:energy_uniform_fixed_ball}
\lim_{s\rightarrow 0}\sup_{w\in B(o,R)}
\left||du_s(w)|^2-n\right|= 0.
\end{equation}
Set \(s=e^{-t}\).  By \eqref{eq:vertical_ray},
\(r(t)=\delta_s(o)\), and because \(\delta_s\) is an isometry,
\[
\delta_s(B(o,R))=B(r(t),R).
\]

The source and target dilations in
\(u_s=\delta_{1/s}\circ u\circ\delta_s\) are isometries, so
\begin{equation}\label{eq:energy_scaling_identity}
|du_s(w)|^2=|du(\delta_sw)|^2
\qquad(w\in\Hn).
\end{equation}

Combining \eqref{eq:energy_uniform_fixed_ball} and
\eqref{eq:energy_scaling_identity} gives
\begin{align}
\lim_{t\to\infty}
\sup_{z\in B(r(t),R)}
\bigl||du(z)|^2-n\bigr|
&=
\lim_{s\downarrow0}
\sup_{w\in B(o,R)}
\bigl||du_s(w)|^2-n\bigr|=0.
\nonumber
\end{align}
This proves \eqref{eq:sharp_tube_limit}.  
\end{proof}

\begin{remark}
The preceding argument may be viewed as a tangent-map version of the
\(C^1\) boundary regularity theory of Li--Tam
\cite{LiTam,LiTamII}.  

In that theory, one assumes \(C^1\) control of a
proper harmonic map up to the geometric boundary together with a
nonvanishing differential of its boundary map.  The harmonic-map equation
then determines the asymptotic normal scale from the tangential boundary
energy.  An explicit formulation of this boundary calculation in the more
general asymptotically hyperbolic setting is given in
\cite[Lemma~1.3 and Corollary~1.4]{AkutagawaMatsumoto}. 

Translated into
the fixed upper-half-space normalization used here, for a source of
dimension \(n\) the
conclusion is
\begin{align}
|du|^2\longrightarrow n
\nonumber
\end{align}
as the interior point approaches the ideal boundary. This is an asymptotic interior statement; \(|du|^2\) is not literally
evaluated at a point of the ideal boundary.

The point of \Cref{prop:tangent} is that no \(C^1\) compactification of
\(u\), and no \(C^1\) regularity of \(\phi=\bdry u\) on a boundary
neighborhood, is required.  

For the globally Lipschitz harmonic rough isometry considered here, one
invertible boundary differential \(A=D\phi_{\mathbf{0}}\) is enough.  This
is a differential of the boundary map, not a differential of the interior
map \(u\) at an ideal point.  The boundary-tangent renormalization and
\Cref{thm:LiWang_uniqueness} identify the model \(H_A\) defined in
\eqref{eq:cA_def}--\eqref{eq:HA_def}.  Although \(H_A\) need not be an
isometry, \Cref{lem:HA_harmonic} gives \(|dH_A|^2\equiv n\); invertibility
of \(A\) makes \(H_A\) a bi-Lipschitz rough isometry and permits the
uniqueness theorem to identify every subsequential limit.

Consequently, \Cref{prop:tangent} yields the moving-ball asymptotic, and
\Cref{lem:cocompact_propagation}, together with \(\Gamma\)-invariance,
converts that asymptotic into the global identity \(|du|^2\equiv n\).
\end{remark}

% ================================================================
\section{The sharp estimate and Mostow rigidity}
%\ReviewUnusedLabel{sec:sharp}
% ================================================================

The following elementary lemma is the deterministic propagation step.

\begin{lemma}\label{lem:cocompact_propagation}
Let a group \(G\) act cocompactly by isometries on a metric space \(X\),
and let $\mathfrak{q}:X\longrightarrow\R$ be \(G\)-invariant.  Suppose that there exist a curve
\(\gamma: [0,\infty)\rightarrow X\) and a constant \(c\in\R\) such that 
\begin{equation}\label{eq:abstract_moving_ball_limit}
\lim_{t\to\infty}
\sup_{z\in B(\gamma(t),R)}|\mathfrak{q}(z)-c|=0, \quad \quad \text{for every }R\geq0.
\end{equation}
Then $\mathfrak{q}\equiv c$ on $X$.
\end{lemma}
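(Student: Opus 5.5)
Since the action of \(G\) on \(X\) is cocompact, there is a compact set \(K\subset X\) with \(X=\bigcup_{g\in G}gK\); this is the meaning of cocompactness we use. Fix a point \(x_0\in X\) and put \(D:=\sup_{k\in K}d(x_0,k)<\infty\), which is finite because \(K\) is compact and \(k\mapsto d(x_0,k)\) is continuous. The strategy is to show that every \(G\)-orbit meets every moving ball \(B(\gamma(t),R)\) once \(R\) is a fixed constant depending only on \(K\). Then the \(G\)-invariance of \(\mathfrak q\) lets us transport the asymptotic estimate \eqref{eq:abstract_moving_ball_limit} back to an arbitrary point.

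\textbf{Step 1.} We check that every point of \(X\) lies within distance \(D\) of the orbit \(Gx_0\). Indeed, for \(y\in X\) choose \(g\in G\) and \(k\in K\) with \(y=gk\). Since \(g\) is an isometry, \(d(y,gx_0)=d(gk,gx_0)=d(k,x_0)\leq D\).

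\textbf{Step 2.} Fix an arbitrary \(z\in X\). We want, for every \(t\geq0\), an element \(g_t\in G\) with
\[
d(g_tz,\gamma(t))\leq R_z:=2D+d(z,x_0).
\]
By Step 1 applied to \(y=\gamma(t)\), there is \(h_t\in G\) with \(d(h_tx_0,\gamma(t))\leq D\). Set \(g_t:=h_t\). Then
\[
d(h_tz,\gamma(t))\leq d(h_tz,h_tx_0)+d(h_tx_0,\gamma(t))\leq d(z,x_0)+D\leq R_z.
\]
So in fact the cruder radius \(d(z,x_0)+D\) already suffices. Hence \(h_tz\) lies in the closed ball of radius \(R_z\) about \(\gamma(t)\), and therefore in the open ball \(B(\gamma(t),R_z+1)\). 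Using the slightly larger radius \(R_z+1\) avoids any ambiguity between open and closed balls in \eqref{eq:abstract_moving_ball_limit}.

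\textbf{Step 3.} By invariance, \(\mathfrak q(z)=\mathfrak q(h_tz)\). Therefore
\[
|\mathfrak q(z)-c|\leq\sup_{w\in B(\gamma(t),R_z+1)}|\mathfrak q(w)-c|
\]
for every \(t\). The left side does not depend on \(t\), so letting \(t\to\infty\) and applying \eqref{eq:abstract_moving_ball_limit} with \(R=R_z+1\) gives \(\mathfrak q(z)=c\). Since \(z\) was arbitrary, \(\mathfrak q\equiv c\).

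The only point needing care is the precise meaning of cocompactness. If it is instead phrased as "the quotient \(G\backslash X\) is compact" rather than via a compact fundamental set, one needs properness of \(X\), or equivalently a bounded fundamental set, to produce \(K\). For \(\Gamma\) acting on \(\Hn\) this is immediate: take \(K\) to be a closed ball of radius \(\operatorname{diam}(M)\) around a point. This gives the uniform bound \(D\), and it is the only genuinely geometric input. Notably, the argument does not need the curve \(\gamma\) to be continuous or to escape to infinity.
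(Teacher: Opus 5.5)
Your proof is correct and follows essentially the paper's route: cocompactness together with the isometric action translates a point of \(X\) into a ball of controlled radius about \(\gamma(t)\), and \(G\)-invariance transports the moving-ball limit back. The paper writes \(\gamma(t)=g_tk_t\), places all of \(K\) into \(B(\gamma(t),\operatorname{diam}K)\), and then extends from \(K\) using \(GK=X\), whereas you argue pointwise with a \(z\)-dependent radius. Your enlargement to \(R_z+1\) also cleanly avoids the open-versus-closed-ball slip in the paper's inclusion \(g_tK\subset B(\gamma(t),D)\), which holds only for the closed ball; the hypothesis for every \(R\) makes this harmless either way.
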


\begin{proof}
Cocompactness provides a nonempty compact set \(K\subset X\) such that
\[
GK=X.
\]

Set $D:=\operatorname{diam}K<\infty$.  For every \(t\geq 0\), choose \(g_t\in G\) and \(k_t\in K\) such that
\[
\gamma(t)=g_tk_t.
\]

Note that $g_t$ is an isometry, hence
\[
d(g_tx,\gamma(t))
=
d(g_tx,g_tk_t)
=
d(x,k_t)
\leq D, \quad \quad \quad \forall x\in K.
\]
Hence $g_tK\subset B(\gamma(t),D)$.

Using the \(G\)-invariance of \(\mathfrak{q}\),
\begin{align*}
\sup_{x\in K}|\mathfrak{q}(x)-c|&=\sup_{x\in K}|\mathfrak{q}(g_tx)-c|=
\sup_{y\in g_tK}|\mathfrak{q}(y)-c|\leq
\sup_{z\in B(\gamma(t),D)}|\mathfrak{q}(z)-c|.
\end{align*}

Letting \(t\to\infty\) and applying
\eqref{eq:abstract_moving_ball_limit} with \(R=D\) gives
\[
\sup_{x\in K}|\mathfrak{q}(x)-c|=0.
\]

Thus \(\mathfrak{q}=c\) on \(K\).  Since \(GK=X\) and \(\mathfrak{q}\) is invariant, the identity extends to \(X\).
\end{proof}

\begin{proof}[Proof of \Cref{thm:main}]
\textbf{Step (1)}. All hypotheses of Proposition \ref{prop:tangent} now hold:
\(u\) is harmonic, globally Lipschitz, and a rough isometry, while
\(D\phi_\xi\) exists and is invertible because \(\xi\in E_\phi\).  Applying Proposition \ref{prop:tangent} gives
\begin{equation}\label{eq:q_tube_limit}
\lim_{t\to\infty}
\sup_{z\in B(r(t),R)}\bigl||du(z)|^2-n\bigr|=0, \quad \quad \forall R\geq 0.
\end{equation}

\textbf{Step (2)}. Differentiating equivariance gives
\[
du_{\gamma x}\circ d\gamma_x
=
d\rho(\gamma)_{u(x)}\circ du_x, \quad \quad \forall \gamma\in \Gamma.
\]

Since the differentials of the source and target deck transformations are linear isometries, the Hilbert--Schmidt norm is preserved and
\begin{equation}\label{eq:q_invariant}
|du(\gamma x)|^2= |du(x)|^2 \qquad (\gamma\in\Gamma,\ x\in\Hn).
\end{equation}

Thus the deck group \(\Gamma\) acts cocompactly by isometries on \(\Hn\), and \(|du|^2\) is \(\Gamma\)-invariant by \eqref{eq:q_invariant}.  

From (\ref{eq:q_tube_limit}) and \Cref{lem:cocompact_propagation}, we get $|du|^2\equiv n$ on $\Hn$.

Since \(|du|^2\) descends to \(|d\bar u|^2\), the conclusion follows.
\end{proof}

In the rest of this section, \(\Jac F\) denotes the signed Jacobian determined by the chosen
orientations, whereas \( |\Jac F| \) is the product of the singular
values of \(dF\).

\begin{proof}[Proof of \Cref{cor:mostow}]
\textbf{Step (1)}. Theorem~\ref{thm:main} gives
\begin{align}
|d\bar u|^2\equiv n.\label{gradient-u-equality}
\end{align}

Fix \(x\in M\), and let
\[
\lambda_1(x),\dots,\lambda_n(x)
\]
be the singular values of \(d\bar u_x\).  

The arithmetic--geometric mean inequality gives
\begin{align}
|\Jac\bar u(x)|^{2/n}
=
\left(\prod_{i=1}^n\lambda_i(x)^2\right)^{1/n}
\leq
\frac1n\sum_{i=1}^n\lambda_i(x)^2
= \frac{1}{n}|d\bar u|^2(x)= 1.
\label{eq:jac_bounds_both}
\end{align}

Let
\begin{align}
\bar v_h:N\longrightarrow M
\nonumber
\end{align}
be the Eells--Sampson harmonic representative of the homotopy class of the
previously chosen homotopy inverse \(\bar v\).  After the corresponding
choice of lifts, its lift is \(\rho^{-1}\)-equivariant.  All the hypotheses
used above are symmetric under interchanging
\((M,\Gamma,\rho)\) with \((N,\Lambda,\rho^{-1})\).  Applying
Theorem~\ref{thm:main} to \(\bar v_h\) therefore gives
\begin{align}
|d\bar v_h|^2\equiv n,
\qquad
|\Jac\bar v_h|\leq1.
\nonumber
\end{align}

Because \(\bar u\) lies in a homotopy-equivalence class,
\(|\deg\bar u|=1\): if \(h:N\to M\) is a homotopy inverse, then
\(\deg(h)\deg(\bar u)=\deg(h\circ\bar u)=\deg(\Id_M)=1\), and both
degrees are integers.  The degree formula gives
\[
\int_M\Jac(\bar u)\,\dd\operatorname{vol}_M
=
\deg(\bar u)\Vol(N).
\]
Therefore
\begin{equation}\label{eq:volume_forward}
\Vol(N)
=
\left|\int_M\Jac(\bar u)\,\dd\operatorname{vol}_M\right|
\leq
\int_M|\Jac\bar u|\,\dd\operatorname{vol}_M
\leq
\Vol(M).
\end{equation}

Applying the same argument to \(\bar v_h\) gives
\begin{equation}\label{eq:volume_reverse}
\Vol(M)\leq\Vol(N).
\end{equation}

Combining \eqref{eq:volume_forward} and
\eqref{eq:volume_reverse} gives $\Vol(M)=\Vol(N)$.

The first and last terms in \eqref{eq:volume_forward} are now equal.
Consequently,
\[
\int_M|\Jac\bar u|\,\dd\operatorname{vol}_M=\Vol(M).
\]

By \eqref{eq:jac_bounds_both}, we have 
\(0\leq|\Jac\bar u|\leq1\). Note
\[
\int_M\bigl(1-|\Jac\bar u|\bigr)
\,\dd\operatorname{vol}_M=0.
\]
The integrand is continuous and nonnegative, so it vanishes identically:
\begin{align}
|\Jac\bar u|\equiv1. \label{Jac-u-equality}
\end{align}

\textbf{Step (2)}. By (\ref{gradient-u-equality}) and (\ref{Jac-u-equality}), 
\begin{equation}\label{eq:trace_det_equalities}
\sum_{i=1}^n\lambda_i(x)^2=n,
\qquad
\prod_{i=1}^n\lambda_i(x)=1.
\end{equation}

By \eqref{eq:trace_det_equalities}, equality holds in the
arithmetic--geometric mean inequality
\[
1
=
\left(\prod_{i=1}^n\lambda_i(x)^2\right)^{1/n}
\leq
\frac1n\sum_{i=1}^n\lambda_i(x)^2
=
1,
\]
so $\lambda_1(x)=\cdots=\lambda_n(x)=1.$

Thus \(d\bar u_x\) is a linear isometry for every \(x\in M\).  Hence
\(\bar u\) is a local Riemannian isometry and, in particular, a local
diffeomorphism.

Because \(M\) is compact, \(\bar u\) is proper.  Its image is open because
\(\bar u\) is a local diffeomorphism and closed because it is the continuous
image of a compact space.  Since \(N\) is connected, \(\bar u\) is
surjective.  A proper local diffeomorphism is a covering map.  

The subgroup of \(\pi_1(N)\) determined by this covering is
\(\bar u_*(\pi_1(M))\).  But \(\bar u\) is homotopic to the original
homotopy equivalence, so \(\bar u_*\) is the prescribed isomorphism
\(\rho\).  The covering therefore has one sheet.  Hence \(\bar u\) is a
diffeomorphism and, being a local isometry, is a global Riemannian isometry.
\end{proof}

% ================================================================
%================================================================

% ================================================================

\end{document}